\newtheorem{theorem}{Theorem}
\newtheorem{definition}[theorem]{Definition}
\newtheorem{lemma}[theorem]{Lemma}
\newtheorem{proposition}[theorem]{Proposition}
\newcommand*{\ol}{\overline}
\newcommand{\nn}{\nonumber}
\numberwithin{theorem}{section}
\numberwithin{equation}{section}
\DeclareRobustCommand{\rvdots}{%
  \vbox{
    \baselineskip4\p@\lineskiplimit\z@
    \kern-\p@
    \hbox{.}\hbox{.}\hbox{.}
  }}
\title{Multiple commutation relations
of the quantum affine algebra $U_q(\widehat{\mathfrak{gl}}_N)$, nested Bethe vector and the Gelfand-Tsetlin basis}
\author[1]{Allan John Gerrard\footnote{gerrarda@fc.jwu.ac.jp}}
\affil[1]{Department of Mathematics, Physics and Computer Science, Japan Women's University,}%Department and Organization
\author[2]{Kohei Motegi\footnote{kmoteg0@kaiyodai.ac.jp}}
\affil[2]{Faculty of Marine Technology, Tokyo University of Marine Science and Technology}%Department and Organization
\author[3]{Kazumitsu Sakai\footnote{k.sakai@rs.tus.ac.jp}}
\affil[3]{Department of Physics, Tokyo University of Science}%Department and Organization
\begin{document}

\maketitle

\begin{abstract}
We study a certain type of multiple commutation relations of the quantum affine algebra $U_q(\widehat{\mathfrak{gl}}_N)$.
We show that all the coefficients in the multiple commutation relations
between the $L$-operator elements
are given in terms of the trigonometric weight functions for the vector representation,
independent of the representation of the $L$-operator.
For rank one case, our proof also gives a conceptual understanding why
the coefficients can also be expressed using the Izergin-Korepin determinants.
As a related result,
by specializing expressions for the universal nested Bethe vector by Pakuliak-Ragoucy-Slavnov,
we also find a construction of the Gelfand-Tsetlin basis for the vector representation
using different $L$-operator elements from the constructions by Nazarov-Tarasov or Molev.
We also present corresponding results for the Yangian $Y_h(\mathfrak{gl}_N)$.
\end{abstract}

\section{Introduction}
Quantum groups \cite{Drinfeld,Jimbo,RTF} are Hopf algebra deformations of universal enveloping algebras of Lie algebras,
introduced in connection with integrable systems and the Yang–Baxter equation
\cite{Bethe,PS,Baxterbook,KBI,Slavnovbook}
in the 1980s.
Since then, numerous works have been devoted to their structural theory and applications across representation theory, low-dimensional topology, and mathematical physics.
Nevertheless, even certain naive questions on quantum groups still remain to 
be difficult or need conceptual understandings.
In this paper, we deal with one of such,
the multiple commutation relations.
The defining relations of the quantum groups are given as
commutation relations between two $L$-operator elements.
A naive question is then what are the explicit commutation relations
if there are more types and multiple $L$-operator elements.
Investigating multiple commutation relations and multiple actions on vectors
is a fundamental problem and are important for the study
of correlation functions of quantum integrable models \cite{KBI,Slavnovbook,KMST}.
It has also been important for the study of stochastic integrable models \cite{ABPW} and
the three-dimensional partition functions \cite{IMO}. As for higher rank quantum groups and Yangians,
significant progress began to emerge recently. See \cite{PRS,HLPRS,LP} for example.
This is also aligned with the progress of algebraic understanding of the nested Bethe vectors
and higher rank weight functions
\cite{Schultz,Reshetikhin,TVleningrad,Mimachi,TVasterisque,FKPR,BPR,TVsigma,FM,GR,Slavnov,BW,GMS,KT}.
The higher rank trigonometric weight functions are extended to the elliptic case in recent years
\cite{KonnoJintone,KonnoJinttwo,RTV,KM}.

In this paper, we investigate  a certain type of multiple commutation relations
between $L$-operator elements.
We determine the coefficients of the summands 
using specializations of higher rank trigonometric weight functions.
The type of proof as well as the
multiple commutation relations we present in this paper
is different from the ones
studied in previous researches,
which is typically proved by induction on the number of $L$-operator elements.
The strategy of our proof is as follows.
From the basic commutation relations,
we note the coefficients
of the summands appearing in the multiple commutation relations
are independent of the representation of the $L$-operator.
We show that we can extract each coefficient 
by taking an appropriate tensor product of vector representation
for the $L$-operator, and identify as a certain type
of partition functions constructed from the standard $U_q(\widehat{\mathfrak{gl}}_N)$ $R$-matrix on a rectangular grid.
This type of partition functions can be related with
the type of partition functions which represent the off-shell nested Bethe wavefunctions
whose explicit forms are given by the trigonometric weight functions,
and finally all the coefficients are determined as specializations of the trigonometric weight functions
with certain overall factors multiplied.
The argument we present is conceptual,
and also gives an understanding why the coefficients can be expressed using the
Izergin-Korepin determinants for rank one case.

From this study on multiple commutation relations, we note there is a construction
of the Gelfand-Tsetlin basis for the quantum group $U_q(\widehat{\mathfrak{gl}}_N)$
for the case of the vector representation,
which is different from the constructions by Nazarov-Tarasov \cite{NT}
using quantum minors or Molev \cite{Molev} using different $L$-operator elements,
which may not have been written down previously.
We take an indirect approach, and by specializing two expressions for the universal Bethe vectors
by Pakuliak-Ragoucy-Slavnov \cite{PRS},
we give the precise relation, i.e. determine the proportional constants
between
the Gelfand-Tsetlin basis corresponding to the
construction by Molev applied to the vector representation
and another construction which uses different $L$-operator elements.

This paper is organized as follows.
In the next section, we review the quantum affine algebra $U_q(\widehat{\mathfrak{gl}}_N)$,
two basic partition functions associated with the algebra, and their relation with special functions.
In Section 3, we present and give a proof of the multiple commutation relations.
In Section 4, we present a construction of the Gelfand-Tsetlin basis for the tensor product
of vector representation.
In Section 5, we present corresponding results for the Yangian $Y_h(\mathfrak{gl}_N)$. \\

In this paper,
sets of variables and products of functions frequently appear.
We adopt the conventions which are basically the same as the ones used in, for example, \cite{PRS}.
Sets of parameters will be denoted using an overline $\ol{w} = \{w_i | i \in I\}$, where $I$ is some index set. 
We denote the cardinality of the set $\ol{w}$ by $|\ol{w}|$.
For any function, say $f$,
\begin{equation} \label{product-notation}
	f(\ol{w}) = \prod_{w_i\in \ol{w}} f(w_i).
\end{equation}
Note that this will also be used for multivariate functions and binary operations, e.g., $\ol{u}-\ol{v} = \prod_{u_i \in \ol{u}} \prod_{v_j \in \ol{v}} (u_i-v_j)$.
As for operators, we make use of this notation for commuting operators only. 
We will use the notation $\{\ol{u}, \ol{v}\}$ for the union of sets $\ol{u} \cup \ol{v}$. 

We write
\[
\{ \overline{u}^1,\dots,\overline{u}^N \}\;\mapsto\;\{ \overline{v}^1,\dots,\overline{v}^N \}
\]
for a partition of the set $\{\overline{u}^1,\dots,\overline{u}^N\}$ into subsets
$\{ \overline{v}^1,\dots,\overline{v}^N \}$ with $|\overline{v}^j|=|\overline{u}^j|$ for all $j$.
The notation
\[
\sum_{\{\overline{u}^1,\dots,\overline{u}^N\}\mapsto\{\overline{v}^1,\dots,\overline{v}^N\}}
\]
denotes the sum over all such partitions.
For example, 
when $\overline{u}^1=\{ a \}$, $\overline{u}^2=\{ b, c \}$,

\begin{align}
\sum_{\{ \overline{u}^1, \overline{u}^2 \} \mapsto \{ \overline{v}^1,
	\overline{v}^2 \} }
	f(\overline{u}^1, \overline{v}^1)g(\overline{u}^1, \overline{v}^2)
=f(a,a)g(a,b)g(a,c)+f(a,b)g(a,a)g(a,c)+f(a,c)g(a,a)g(a,b),
\end{align}
since there are three cases: (i)
$\overline{v}^1=\{ a \}$, $\overline{v}^2=\{ b, c \}$ (ii)
$\overline{v}^1=\{ b \}$, $\overline{v}^2=\{ a, c \}$ (iii)
$\overline{v}^1=\{ c \}$, $\overline{v}^2=\{ a, b \}$.

We denote the symmetric group, the group of all permutations of a set with \( n \) elements, by \( S_n \).
We also denote $m$ consecutive occurrences of the number $i$ by $i^m$.
This type of notation appears in the description of the (dual) orthonormal basis, for example $e_{i^m} = \underbrace{e_{i} \otimes \cdots \otimes e_{i}}_{m \text{ times}}$.

As for the Gelfand-Tsetlin basis, rather than using the Gelfand-Tsetlin patterns found in \cite{NT,Molev},
we use the convention of \cite{Mimachi,TVasterisque,KonnoJintone,KonnoJinttwo,RTV} that is suited for the description of the basis corresponding to the tensor product of vector representations.
Typically, what we use is a partition $J=(J_1,\dots,J_N)$ such that
\begin{align} \label{partitionJ}
J_1 \cup \cdots \cup J_N=\{1,\dots,n \}, \ \ \ J_j \cap J_k =\varnothing \ (j \neq k).
\end{align}
The $J_j$ are permitted to be empty.
Typical (sub)sets that appear are
$\overline{w}=\{ w_1,\dots,w_n  \}$
and $\overline{w}_{J_j}=\{ w_k \ | \ k \in J_j  \}$ ($j=1,\dots,N$).

Finally, we remark that the parameter $q$ for the quantum affine algebra
$U_q(\widehat{\mathfrak{gl}}_N)$ and $h$ for the Yangian
$Y_h(\mathfrak{gl}_N)$ are both assumed to be generic non-zero complex numbers.

\section{Quantum affine algebra \texorpdfstring{$U_q(\widehat{\mathfrak{gl}}_N)$}{UqGLN} and basic partition functions}
Let $V$ be a complex $N$-dimensional vector space and denote its standard orthonormal basis
by $e_i$, $i=1,\dots,N$. We denote the dual of $V$ by $V^*$ and
the dual basis by $e_i^*$, $i=1,\dots,N$, which satisfy $e_i^* e_j=\delta_{ij}$.
Here, $\delta_{ij}$ is the Kronecker delta: $\delta_{ij}=1$ if $i=j$ and $\delta_{ij}=0$ otherwise.
We introduce the standard matrix units $E_{ij}$, $i,j=1,\dots,N$ as $E_{ij}e_k=\delta_{jk}e_i$. 

We introduce the trigonometric $R$-matrix $R(u,v)$ $\in \mathrm{End}(V \otimes V)$
\begin{align}
	R(u,v)=&(qu-q^{-1}v) \sum_{1 \leq i \leq N} E_{ii}\otimes E_{ii}
	+ \sum_{1 \leq i < j \leq N} (u-v)(E_{ii}\otimes E_{jj} + E_{jj} \otimes E_{ii}) 
\nonumber	\\
&+ \sum_{1 \leq i < j \leq N} ((q-q^{-1})v E_{ij} \otimes E_{ji} + (q-q^{-1})u E_{ji} \otimes E_{ij}).
\label{trigRverone}
\end{align}

Denoting $\displaystyle R(u,v)e_i \otimes e_j=\sum_{k,\ell=1}^N [R(u,v)]_{ij}^{k \ell}e_k \otimes e_\ell$,
the non-zero $R$-matrix elements are
\begin{align}
[R(u,v)]_{ii}^{ii}&=qu-q^{-1}v, \ \ \ i=1,\dots,N, \\
[R(u,v)]_{ij}^{ij}&=u-v, \ \ \ i, j=1,\dots,N, \ \ \ i \neq j, \\
[R(u,v)]_{ji}^{ij}&=(q-q^{-1})v, \ \ \ 1 \le i < j \le N, \\
[R(u,v)]_{ij}^{ji}&=(q-q^{-1})u, \ \ \ 1 \le i < j \le N.
\end{align}

\begin{figure}
	\centering
	\includegraphics{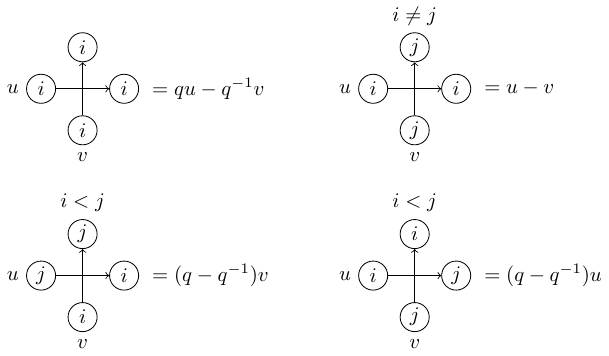}
	\caption{Non-zero matrix elements of the trigonometric $R$-matrix.} \label{fig:trigRmatrix}
\end{figure}

The $R$-matrix elements $[R(u,v)]_{ij}^{k \ell}$ satisfy
\begin{equation} \label{icerule}
	[R(u,v)]_{ij}^{k \ell}=0 \qquad \text{unless } (i=k, \, j=\ell) \text{ or } (i=\ell, \, j=k).
\end{equation}
This is more restrictive than the ice rule, where
$[R(u,v)]_{ij}^{k \ell}$=0 unless $i+j=k+\ell$, which is obeyed by the $R$-matrices of higher spin representations of $U_q(\widehat{\mathfrak{sl}}_2)$.
We will refer to the property \eqref{icerule} as \emph{color conservation}.

% The non-zero $R$-matrix elements $[R(u,v)]_{ij}^{k \ell}$ satisfy
% $k=i$ and $j=\ell$ if $i \neq \ell$ for example,
% which are more restrictive than the so-called ice rule:
% $[R(u,v)]_{ij}^{k \ell}$=0 unless $i+j=k+\ell$.

We use graphical descriptions in this paper.
See 
Figure~\ref{fig:trigRmatrix}
for the trigonometric $R$-matrix.

\begin{figure}
	\centering
	\includegraphics{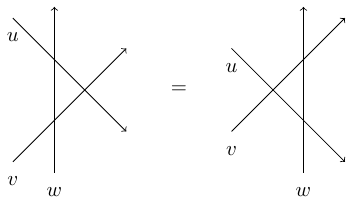}
	\caption{Graphical description of the Yang-Baxter equation.} \label{fig:ybe}
\end{figure}

\begin{figure}
	\centering
	\includegraphics{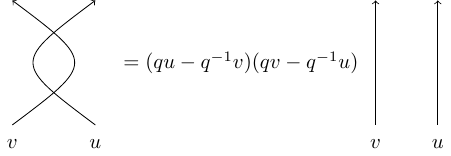}
	\caption{Graphical description of the unitarity relation.} \label{fig:unitarity}
\end{figure}

The trigonometric $R$-matrix satisfies the Yang-Baxter relation 
(Figure~\ref{fig:ybe})
\begin{equation}
\label{eq:YBE-spectral}
 R_{12}(u,v)R_{13}(u,w)R_{23}(v,w)
 = R_{23}(v,w)R_{13}(u,w)R_{12}(u,v) \in \mathrm{End}(V \otimes V \otimes V),
\end{equation}
and the unitarity relation (Figure~\ref{fig:unitarity}),
\[
R_{12}(u,v) \, R_{21}(v,u) =(qu-q^{-1}v)(qv-q^{-1}u) \mathrm{I} \otimes \mathrm{I}  \in \mathrm{End}(V \otimes V),
\]
where $\mathrm{I}$ is the identity matrix acting on $V$.

The quantum affine algebra $U_q(\widehat{\mathfrak{gl}}_N)$ is defined as the unital associative algebra generated by the formal series
\[
T(u) = \sum_{i,j=1}^N E_{ij} \otimes T_{ij}(u), 
\qquad 
\bar T(u) = \sum_{i,j=1}^N E_{ij} \otimes \bar T_{ij}(u),
\]
where
\[
T_{ij}(u) = \sum_{r \ge 0} T_{ij}[r] \, u^{-r}, 
\qquad 
\bar T_{ij}(u) = \sum_{r \ge 0} \bar T_{ij}[r] \, u^{r},
\]
subject to the following relations:
\begin{align}
R(u,v) \, T_1(u) T_2(v) &= T_2(v) T_1(u) \, R(u,v), \label{eq:RTT++} \\[6pt]
R(u,v) \, \bar T_1(u) \bar T_2(v) &= \bar T_2(v) \bar T_1(u) \, R(u,v), \label{eq:RTT--} \\[6pt]
R(u,v) \, T_1(u) \bar T_2(v) &= \bar T_2(v) T_1(u) \, R(u,v). \label{eq:RTT+-}
\end{align}
Here, $T_1(u) = T(u) \otimes 1$, $T_2(v) = 1 \otimes T(v)$ (and similarly for $\bar T$).
The above definition is for the case where the central charge is equal to zero; for the generic central charge case see \cite{DF,KK} for example.
In this paper, we refer to
$T(u)$ as the {\it L-operator}
and
$T_{ij}(u)$, $i,j=1,\dots,N$ as the {\it L-operator elements}
(also called the {\it monodromy matrix} and the {\it monodromy matrix elements} in the literature).

In this paper we consider the relation \eqref{eq:RTT++},
which is a collection of commutation relations between $T_{ij}(u)$, $i,j=1,\dots, N$.
Here we write down the relations which we consider in more depth
\begin{align}
T_{ik}(u)T_{ij}(v)&=\frac{qv-q^{-1}u}{v-u}T_{ij}(v)T_{ik}(u)-\frac{u(q-q^{-1})}{v-u}T_{ij}(u)T_{ik}(v), \ \ \  k<j, \label{fundcommone} \\
[T_{ij}(u),T_{ij}(v)]&=0, \label{fundcommtwo}
\end{align}
for $i,j,k=1,\dots,N$.

\begin{figure}
	\centering
	\includegraphics{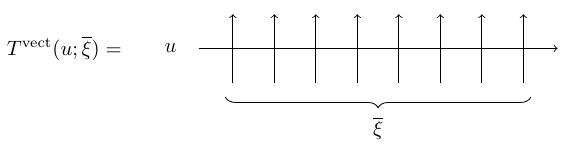}
	\includegraphics{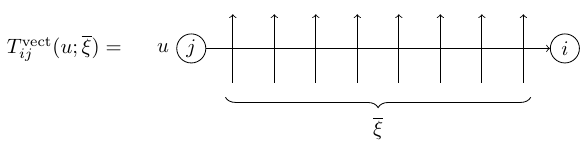}
	\caption{Graphical description of $T^{\mathrm{vect}}(u;\overline{\xi})$ and $T_{ij}^{\mathrm{vect}}(u;\overline{\xi})$.} \label{fig:tvect}
\end{figure}

The most natural representation of $T(u)$ which we denote by
$T^{\mathrm{vect}}(u;\overline{\xi})$ is defined as
\begin{align}
T^{\mathrm{vect}}(u;\overline{\xi})=R_{0n}(u,\xi_n) \cdots R_{02}(u,\xi_2) R_{01}(u,\xi_1),
\end{align}
where $n=|\overline{\xi}|$.
Note that here we do not use the notation defined in \eqref{product-notation}, as the $R$-matrices do not mutually commute.
$T^{\mathrm{vect}}(u;\overline{\xi})$ acts on $V_0 \otimes V_1 \otimes V_2 \otimes \cdots \otimes V_n$,
where each $V_i$ is a copy of $V$.
$V_0$ is called the auxiliary space and $V_i$ ($i=1,\dots,n$) are called quantum spaces.
Applying the Yang-Baxter relation \eqref{eq:YBE-spectral} repeatedly, we note that
$T^{\mathrm{vect}}(u;\overline{\xi})$ satisfies \eqref{eq:RTT++}.
We denote the $T_{ij}(u)$ corresponding to this representation by $T_{ij}^{\mathrm{vect}}(u;\overline{\xi})$
or $T_{ij}(u;\overline{\xi})$ for short. $T_{ij}(u;\overline{\xi})$ acts on $V_1 \otimes V_2 \otimes \cdots \otimes V_n$.
We denote
the standard orthonormal (dual) basis of $V_1 \otimes V_2 \otimes \cdots \otimes V_n$ by
 $e_{i_1 i_2 \dots i_n}=e_{i_1} \otimes e_{i_2} \otimes \cdots \otimes e_{i_n}$ 
($e^*_{i_1 i_2 \dots i_n}=e^*_{i_1} \otimes e^*_{i_2} \otimes \cdots \otimes e^*_{i_n}$), $i_1,i_2,\dots,i_n=1,\dots,N$.
See Figure~\ref{fig:tvect} for a
graphical description of $T^{\mathrm{vect}}(u;\overline{\xi})$
and $T_{ij}^{\mathrm{vect}}(u;\overline{\xi})$.
To each copy of the vector space $V$, a spectral variable is assigned.
For $T^{\mathrm{vect}}(u;\overline{\xi})$,
we assign $u$ to $V_0$, and
$\xi_i$ is assigned to each of the quantum spaces $V_i$ ($i=1,2,\dots,n$).

\color{red}

	%In this section, we use the polynomial, trigonometric $R$-matrix given by 

%Graphically, 

%\begin{center}

%\begin{tikzpicture}
%	\begin{scope}
%		\foreach \c/\n/\l in {
%			(-1,0)/1/$i$,
%			(1,0)/2/$i$,
%			(0,-1)/3/$i$,
%			(0,1)/4/$i$}
%		{
%			\node [draw,circle,white,label=center:\l] (\n) at \c {};
%		}
%		\draw[->] (1) -- (2);
	%	\draw[->] (3) -- (4);

	%	\node at (0,-2) {$\frac{qu-q^{-1}v}{u-v}$};
	%\end{scope}

%	\tikzset{shift={(3,0)}}

%	\begin{scope}
%		\foreach \c/\n/\l in {
%			(-1,0)/1/$j$,
%			(1,0)/2/$j$,
%			(0,-1)/3/$i$,
%			(0,1)/4/$i$}
%		{
%			\node [draw,circle,white,label=center:\l] (\n) at \c {};
%		}
%		\draw[->] (1) -- (2);
%		\draw[->] (3) -- (4);
%		\node at (0,-2) {$1$};
%	\end{scope}

%	\tikzset{shift={(3,0)}}

%	\begin{scope}
%		\foreach \c/\n/\l in {
%			(-1,0)/1/$i$,
%			(1,0)/2/$j$,
%			(0,-1)/3/$j$,
%			(0,1)/4/$i$}
%		{
%			\node [draw,circle,white,label=center:\l] (\n) at \c {};
%		}
%		\draw[->] (1) -- (2);
%		\draw[->] (3) -- (4);

%		\node at (0,-2) {$\frac{u(q-q^{-1})}{u-v}$};
%	\end{scope}

%	\tikzset{shift={(3,0)}}

%	\begin{scope}
%		\foreach \c/\n/\l in {
%			(-1,0)/1/$j$,
%			(1,0)/2/$i$,
%			(0,-1)/3/$i$,
%			(0,1)/4/$j$}
%		{
%			\node [draw,circle,white,label=center:\l] (\n) at \c {};
%		}
%		\draw[->] (1) -- (2);
%		\draw[->] (3) -- (4);
%		\node at (0,-2) {$\frac{v(q-q^{-1})}{u-v}$};
%	\end{scope}

%\end{tikzpicture}
	
%\end{center}

\color{black}

\begin{figure}
	\centering
	\includegraphics{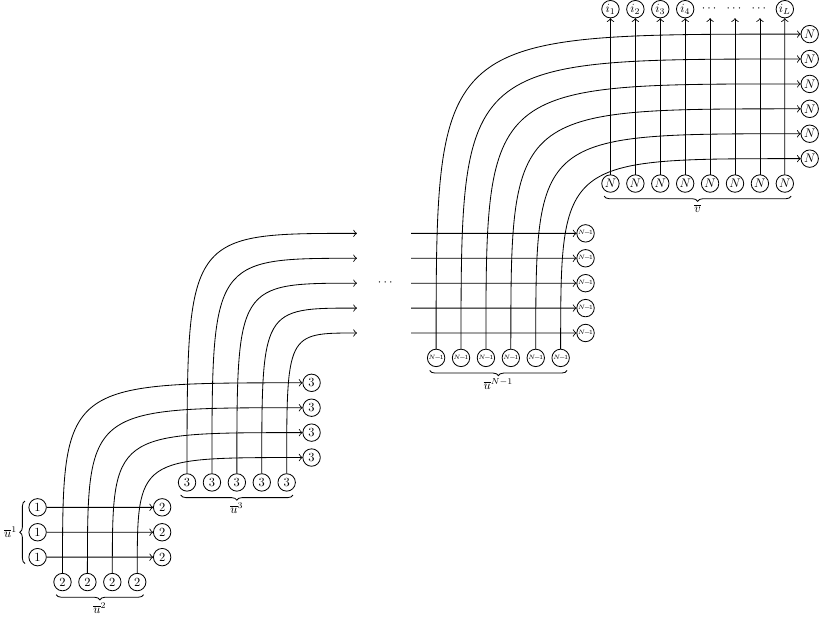}
	\caption{The partition function. } \label{fig:partitionfn}
\end{figure}

We introduce a class of partition functions graphically represented by
Figure~\ref{fig:partitionfn},
which consist of
$(N-1)$ layers. We call the layer in the bottom-left the first layer, the layer northeast of it the second layer, and so
on. The $j^\text{th}$ layer consists of horizontal lines and vertical lines which represent vector spaces; we call them the auxiliary spaces and quantum spaces in the $j^\text{th}$ layer respectively.

For $j=1,\dots,N-1$,
we denote the set of spectral variables in the auxiliary spaces in the $j^\text{th}$ layer
by $\overline{u}^j
=\{u_1^{j},\dots,u_{k_j}^{j}  \}$ ($k_j=|\overline{u}^j|$). The ordering of the variables for each set
can be arbitrary
since the type of partition functions under consideration is symmetric
with respect to the permutation of the variables due to the Yang-Baxter relation \eqref{eq:YBE-spectral}.

The set of spectral variables of the quantum spaces in the $(N-1)^\text{th}$ layer is denoted
by $\overline{v}
=\{v_1,\dots,v_{L}  \}$ ($L=|\overline{v}|$).
As for this set of variables,
the type of partition functions  is not symmetric in general.
To each quantum space with spectral variable in $\overline{v}$, we assign a color, 
which represents contraction by the corresponding basis vector. In Figure~\ref{fig:partitionfn}, this is denoted by a circle on top of the vertical line (quantum space).
We call the quantum space to which the spectral variable $v_j$ is assigned the {\it $j^\text{th}$ quantum space}.
We call the place to which a color is assigned
in the $j^\text{th}$ quantum space the {\it $j^\text{th}$ coordinate}, running from $1,\dots,L$.
We denote the color assigned to the $j^\text{th}$ coordinate by $i_j$.
We denote the set of colors $(i_1,\dots,i_L)$ by
$\boldsymbol{I}$.
For $\boldsymbol{I}$,
let $\boldsymbol{I}^j \subset \{1,\dots,L \}$ denote the coordinates colored by $1,\dots,j$.
Note that $|\boldsymbol{I}^j|=k_j$.
For $\boldsymbol{I}^j \subset \boldsymbol{I}^{j+1}$,
we introduce $\widetilde{\boldsymbol{I}}^j$ as follows.
We relabel the coordinates of $\boldsymbol{I}^{j+1}$ to $\{ 1,\dots,k_{j+1} \}$, preserving order.
Accordingly, as a subset, $\boldsymbol{I}^j $ is mapped to a subset of
$\{ 1,\dots,k_{j+1} \}$ which we define as $\widetilde{\boldsymbol{I}}^j=\{ \widetilde{I}_1^j
< \widetilde{I}_2^j< \cdots < \widetilde{I}_{k_j}^j
 \}$; see Figure~\ref{fig:label-summary}.
 
\begin{figure}
	\centering
	\includegraphics{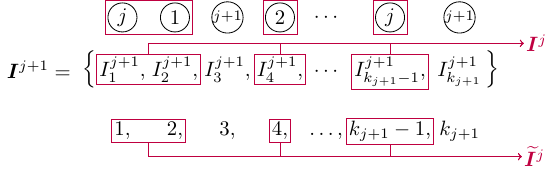}
	\caption{Graphical explanation of the construction of $\widetilde{\boldsymbol{I}}^j$.} \label{fig:label-summary}
\end{figure}

We denote the partition function by $\psi(\overline{u}^1,\dots,\overline{u}^{N-1} | \overline{v} | \bm I )$.
This corresponds to a graphical description of a version
of the off-shell nested Bethe wavefunctions.

We introduce the trigonometric weight functions.

\begin{definition}
The trigonometric weight functions are defined as
\begin{align} \label{trigonometric-wavefunction}
        &W (\overline{u}^1,\dots,\overline{u}^{N-1} | \overline{v} | \bm I )        
       := \sum_{\sigma_1 \in S_{k_1}} \cdots \sum_{\sigma_{N-1} \in S_{k_{N-1}}} \nonumber \\
        &\prod_{p=1}^{N-2} 
\Bigg\{
            \prod_{a=1}^{k_p} \Bigg(
\prod_{i=1}^{\widetilde{I}_{a}^{p}-1} 
 \left(u^{p}_{\sigma_p(a)}- u^{p+1}_{\sigma_{p+1}(i)}   \right) \times (q-q^{-1})u^{p}_{\sigma_p(a)} \times
                \prod_{i=\widetilde{I}_{a}^{p}+1}^{ k_{p+1}}
                    \left(q u^{p}_{\sigma_p(a)}-q^{-1} u^{p+1}_{\sigma_{p+1}(i)} \right)
            \Bigg) \nonumber \\
&\times
            \prod_{a<b}^{k_p} 
                \frac{q^{-1} u^{p}_{\sigma_p(a)}-q u^{p}_{\sigma_p(b)}}{u^{p}_{\sigma_p(a)}-u^{p}_{\sigma_p(b)}}
        \Bigg\}
\nonumber        \\ 
&\times 
        \prod_{a=1}^{k_{N-1}} \left(
            \prod_{i=1}^{I_a^{N-1}-1} 
                \left(u_{\sigma_{N-1}(a)}^{N-1}-v_i^{N-1}\right) \times (q-q^{-1})u_{\sigma_{N-1}(a)}^{N-1} \times
            \prod_{i=I_a^{N-1}+1}^{L} 
                \left(qu_{\sigma_{N-1}(a)}^{N-1}-q^{-1}v_i^{N-1} \right)
        \right)
     \nonumber   \\ 
&\times 
        \prod_{a<b}^{k_{N-1}} 
            \frac{q^{-1} u^{N-1}_{\sigma_{N-1}(a)}- q u^{N-1}_{\sigma_{N-1}(b)}}{u^{N-1}_{\sigma_{N-1}(a)}-u^{N-1}_{\sigma_{N-1}(b)}}.
    \end{align}   

\end{definition}

There is
the following correspondence between the partition functions and the trigonometric weight functions.
\begin{theorem} \label{offshellbethetrigonometricweight}
The following holds:
\begin{align}
\psi(\overline{u}^1,\dots,\overline{u}^{N-1} | \overline{v} | \bm I ) =
W (\overline{u}^1,\dots,\overline{u}^{N-1} | \overline{v} | \bm I ) .
\end{align}
\end{theorem}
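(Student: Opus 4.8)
The plan is to prove the identity $\psi = W$ by induction on the rank $N$, peeling off the top (that is, the $(N-1)^{\text{th}}$) layer of the partition function. First I would set up the base case $N=2$: here the partition function $\psi(\overline{u}^1 \mid \overline{v} \mid \bm I)$ is a single-layer lattice built from the trigonometric $R$-matrix with auxiliary rows carrying $\overline{u}^1$ and columns carrying $\overline{v}$, with each column contracted against the basis vector $e_{i_j}$ (color $1$ or $2$). One computes this directly: since the $R$-matrix elements are those of the six-vertex-type model, the admissible configurations are in bijection with the ways the $k_1$ ``color-$1$'' paths entering from the left thread through the lattice and exit at the $k_1$ columns colored $1$. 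Summing the Boltzmann weights over these configurations, and collecting the freedom to permute the $k_1$ auxiliary lines (using the Yang--Baxter relation \eqref{eq:YBE-spectral} and the resulting symmetry in $\overline{u}^1$), reproduces exactly the single sum over $\sigma_{N-1}\in S_{k_{N-1}}$ in \eqref{trigonometric-wavefunction} with $N-1=1$. The key algebraic input here is that a color-$1$ path crossing a column to its left (coordinate $< I_a^{1}$) contributes a factor $u - v$ (transmission of the other color), crossing its own column contributes $(q-q^{-1})u$, and crossing a column to its right (coordinate $> I_a^{1}$) contributes $qu - q^{-1}v$ (the diagonal $E_{11}\otimes E_{22}$-type weight after the path has passed), which is precisely the structure of the last two lines of the weight function.

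For the inductive step I would assume the theorem for rank $N-1$ and consider the rank-$N$ partition function. The idea is to think of the topmost layer as acting first: the $(N-1)^{\text{th}}$ layer is itself a rank-$2$-type lattice whose auxiliary variables are $\overline{u}^{N-1}$, whose column variables are $\overline{v}$, and whose output along the auxiliary edges becomes the input ``boundary coloring'' for the lower $(N-2)$-layer sub-partition-function on variables $\overline{u}^1,\dots,\overline{u}^{N-2}$. Concretely, one inserts a resolution of the identity along the internal edges joining the $(N-1)^{\text{th}}$ layer to the rest: the sum over intermediate colorings is exactly the sum over which $k_{N-1}$ of the $L$ columns receive the ``merged high color''. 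Evaluating the $(N-1)^{\text{th}}$ layer by the base-case computation gives the last two lines of $W$ (with $p=N-1$ absorbed appropriately, i.e. the line with $I_a^{N-1}$ and $v_i^{N-1}$), while the lower layers, after relabeling the surviving coordinates order-preservingly to $\{1,\dots,k_{N-1}\}$ — which is precisely the passage $\boldsymbol{I}^{j}\mapsto\widetilde{\boldsymbol{I}}^{j}$ in the definitions — give, by the induction hypothesis, the rank-$(N-1)$ weight function $W(\overline{u}^1,\dots,\overline{u}^{N-2}\mid \overline{u}^{N-1}\mid \widetilde{\bm I})$, which is exactly the $p=1,\dots,N-2$ product in \eqref{trigonometric-wavefunction}. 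Matching the two contributions termwise with the nested structure of $W$ closes the induction.

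The main obstacle I expect is bookkeeping the color/coordinate relabeling and verifying that the internal-edge sum in the layered picture matches the nested partition sum in $W$ without double counting or missing terms — in particular, checking that the tilde-relabeling $\widetilde{\boldsymbol{I}}^j$ is exactly the right combinatorial device so that the $\widetilde{I}_a^p$ appearing in the $p\le N-2$ part of $W$ are the images of the $I_a$'s after the upper layer is stripped. A secondary subtlety is that the partition function is symmetric in each $\overline{u}^p$ (by Yang--Baxter) but $W$ is written as an explicit sum over $S_{k_p}$; I would need to argue that the permutation sums in $W$ arise precisely from the freedom to order the auxiliary lines within each layer and that the product $\prod_{a<b}^{k_p}\frac{q^{-1}u^p_{\sigma_p(a)}-qu^p_{\sigma_p(b)}}{u^p_{\sigma_p(a)}-u^p_{\sigma_p(b)}}$ is the correct ``exchange factor'' generated when two auxiliary lines of the same layer cross (this is the $R$-matrix restricted to the two equal-color channels, suitably normalized). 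An alternative, perhaps cleaner, route that I would keep in mind is to avoid induction entirely and instead identify $\psi$ with a known off-shell nested Bethe wavefunction whose evaluation as $W$ is already in the literature (e.g. via the constructions in \cite{PRS,TVasterisque}); but since the statement is phrased as a self-contained theorem, the layer-peeling induction is the most transparent and self-contained argument.
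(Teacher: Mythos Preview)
The paper does not supply its own proof of this theorem: immediately after the statement it writes ``See \cite{PRS,FKPR,TVsigma,FM,BW,GMS,KT}, for example, for various types of proofs, extensions, other forms and related topics,'' and then uses the result as a black box. So your ``alternative route'' at the end---identifying $\psi$ with an off-shell nested Bethe wavefunction whose combinatorial expression is already in the literature---\emph{is} the paper's approach.

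Your main proposal, the layer-peeling induction on $N$, is one of the standard proofs found in that cited literature (this is essentially the nested-Bethe-ansatz recursion; compare \cite{TVsigma,FM,GMS}), so in that sense it is consistent with what the paper invokes. The base case and the overall inductive shape are right. One description needs fixing, though: the interface between the $(N-1)^{\text{th}}$ layer and the rest lives on the $k_{N-1}$ lines carrying $\overline{u}^{N-1}$, not on the $L$ columns carrying $\overline{v}$; so the resolution of the identity is over colorings of those $k_{N-1}$ edges by $\{1,\dots,N-1\}$ (with the multiplicities dictated by $\bm I$), not over ``which $k_{N-1}$ of the $L$ columns receive the merged high color''---the latter set is already pinned down by $\bm I$ as $\bm I^{N-1}$. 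With that corrected, the lower $(N-2)$ layers form a genuine rank-$(N-1)$ partition function whose top boundary is the interface coloring, the induction hypothesis applies, and the remaining work is to check that the sum over interface colorings together with the single-layer sum over routings in layer $N-1$ reorganizes into the $\sigma_{N-1}$-sum in $W$, with the exchange factors $\prod_{a<b}\frac{q^{-1}u^{N-1}_{\sigma(a)}-qu^{N-1}_{\sigma(b)}}{u^{N-1}_{\sigma(a)}-u^{N-1}_{\sigma(b)}}$ accounting for the reorderings. That bookkeeping is precisely where the $\widetilde{\bm I}^{p}$ relabeling earns its keep.
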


See
 \cite{PRS,FKPR,TVsigma,FM,BW,GMS,KT},
for example, for various types of proofs, extensions, other forms
and related topics.

\begin{figure}
	\centering
	\includegraphics{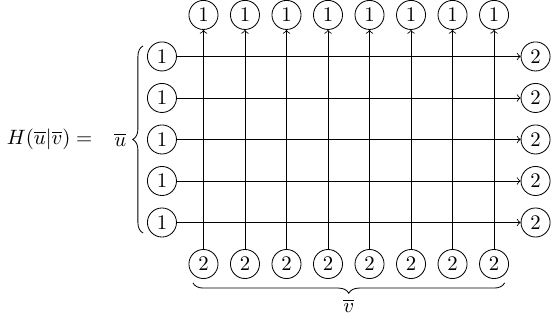}
	\caption{The domain wall boundary partition function.} \label{fig:dwpf}
\end{figure}

We also introduce the domain wall boundary partition functions (Figure~\ref{fig:dwpf}).
\begin{align}
&H(\overline{u}|\overline{v}) 
:=
e_{1^{|\overline{u}|}}^*
T_{21}(\overline{u};\overline{v}) 
e_{2^{|\overline{u}|}},
\end{align}
where $|\overline{u}|=|\overline{v}|$.
Here we recall $e_{2^{|\overline{u}|}}$ refers to the vector $\underbrace{e_{2} \otimes \dots \otimes e_{2}}_{|\overline{u}| \text{ times}}$. 
The domain wall boundary partition functions are
symmetric with respect to the set of variables $\overline{u}$ as well as
$\overline{v}$, due to the Yang-Baxter relation.

The 
Izergin-Korepin determinant is defined as
\begin{align}
K(\overline{u}|\overline{v})
&:= 
		\frac{\prod_{1\leq i,j \leq n}(qu_i-q^{-1}v_j)(u_i-v_j)}{\prod_{1\leq i <j \leq n}(u_i-u_j)(v_j-v_i)}
		\det_{1 \le i,j \le n} \left[\frac{(q-q^{-1})u_i}{(qu_i-q^{-1}v_j)(u_i-v_j)}\right] \nonumber \\
		&=
		\frac{1}{\prod_{1\leq i <j \leq n}(u_i-u_j)(v_j-v_i)}
		\det_{1 \le i,j \le n} \left[
		(q-q^{-1})u_i
		\prod_{\substack{k=1 \\ k \neq j}}^n
		(qu_i-q^{-1}v_k)(u_i-v_k)
		\right],
		\label{introductionDW}
\end{align}
for $\overline{u}=\{u_1,u_2,\dots,u_n  \}$, $\overline{v}=\{v_1,v_2,\dots,v_n \}$.

The domain wall boundary partition functions
can be expressed as the Izergin-Korepin determinant, which is a classical fact \cite{Korepin,Izergin}.
\begin{theorem}
The following holds:
\begin{align}
H(\overline{u}|\overline{v}) = K(\overline{u}|\overline{v}).
\label{domainwallpartitionIKdet}
\end{align}
\end{theorem}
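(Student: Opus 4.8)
The plan is to prove \eqref{domainwallpartitionIKdet} by the classical argument of Korepin and Izergin \cite{Korepin,Izergin}: regarding $H(\overline{u}|\overline{v})$ as a function of $\overline{u}$ with $\overline{v}$ fixed, I will isolate a short list of properties that pin it down uniquely, check that the Izergin-Korepin determinant $K(\overline{u}|\overline{v})$ satisfies the same properties, and verify the base case $|\overline{u}|=|\overline{v}|=1$; the equality then follows by induction on $n=|\overline{u}|$.

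First I would record the elementary properties of $H$. It is symmetric in $\overline{u}$ because the operators $T_{21}(u_i;\overline{v})$ commute pairwise by \eqref{fundcommtwo}, and symmetric in $\overline{v}$ by the Yang-Baxter relation, as already noted in the text. Since $T^{\mathrm{vect}}$ is built from the polynomial $R$-matrix \eqref{trigRverone}, $H$ is a polynomial in each $u_i$, and as every vertex on the row carrying $u_i$ contributes a weight of degree at most one in $u_i$, we have $\deg_{u_i} H\le n$. Moreover $H|_{u_i=0}=0$: in \eqref{trigRverone} the only term that allows the horizontal line on the row carrying $u_i$ to raise its state is $(q-q^{-1})u\,E_{ji}\otimes E_{ij}$ with $i<j$, which vanishes at $u=0$; since that line must pass from state $1$ to state $2$ between its two ends (the subscript $21$ of $T_{21}$), no admissible lattice configuration survives at $u_i=0$, whence $H$ is divisible by $u_i$ with quotient of degree at most $n-1$ in $u_i$.

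The heart of the proof is the recursion. By the two symmetries it suffices to treat the specialization $u_n=v_1$, and (relabeling by those symmetries if needed) we may assume that the vertex at the intersection of the row carrying $u_n$ and the column carrying $v_1$ is a corner of the lattice whose two boundary edges are in distinct states. That corner vertex is then either the pass-through vertex $[R(u_n,v_1)]_{12}^{12}=u_n-v_1$ or the crossing vertex $[R(u_n,v_1)]_{12}^{21}=(q-q^{-1})u_n$; the specialization kills the former and forces the latter. Propagating the ice rule along the remainder of that row (each further vertex is forced to $[R(u_n,v_b)]_{22}^{22}=qu_n-q^{-1}v_b$) and along the remainder of that column (each further vertex is forced to $[R(u_a,v_1)]_{11}^{11}=qu_a-q^{-1}v_1$) freezes a boundary row and a boundary column, leaving a residual $(n-1)\times(n-1)$ lattice that is precisely the domain-wall configuration for $\overline{u}\setminus\{u_n\}$ and $\overline{v}\setminus\{v_1\}$. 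This yields
\begin{equation}
H(\overline{u}|\overline{v})\big|_{u_n=v_1}
=(q-q^{-1})u_n\,\prod_{a=1}^{n-1}(qu_a-q^{-1}v_1)\,\prod_{b=2}^{n}(qv_1-q^{-1}v_b)\;H\big(\overline{u}\setminus\{u_n\}\,\big|\,\overline{v}\setminus\{v_1\}\big).
\end{equation}
Since $H/u_i$ has degree at most $n-1$ in $u_i$, and symmetry in $\overline{v}$ lets us read off its values at $u_n=v_1,\dots,v_n$ from the recursion, the symmetries, the polynomiality, the vanishing at $u_i=0$ and the recursion together determine $H(\overline{u}|\overline{v})$ uniquely in terms of $H$ for smaller $n$.

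It remains to verify that $K(\overline{u}|\overline{v})$ satisfies the same list, which is a direct reading of \eqref{introductionDW}: interchanging two $u_i$ (or two $v_j$) changes the sign of both the determinant and the product over $i<j$ in the denominator, which gives the two symmetries; the second form of \eqref{introductionDW} displays the polynomiality, the degree bound, and the overall factor $(q-q^{-1})u_i$ in the $i$-th row, so $K|_{u_i=0}=0$; and setting $u_n=v_1$ in the second form makes every entry of the bottom row vanish except the one in the first column, so the determinant collapses to that entry times its cofactor, and pulling the common linear factors out of the cofactor recovers $K(\overline{u}\setminus\{u_n\}|\overline{v}\setminus\{v_1\})$ together with exactly the prefactor of the recursion above. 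The base case is the one-vertex evaluation $H(u_1|v_1)=[R(u_1,v_1)]_{12}^{21}=(q-q^{-1})u_1=K(u_1|v_1)$, and the induction closes. I expect the freezing step to be the main obstacle: one must argue carefully that the single forced corner vertex propagates all the way along the boundary row and column and keep precise track of the resulting product of weights; the determinant side of the recursion is a routine but slightly delicate cofactor computation, and the remaining properties are immediate.
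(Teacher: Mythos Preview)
Your argument is correct and is precisely the classical Korepin--Izergin uniqueness proof that the paper invokes: the paper does not give its own proof of this theorem but states it as ``a classical fact \cite{Korepin,Izergin}'', and your characterization (symmetry, polynomiality with the degree bound, vanishing at $u_i=0$, the corner-freezing recursion at $u_n=v_1$, and the $n=1$ base case) is exactly that classical argument carried out for the polynomial $R$-matrix \eqref{trigRverone}. The freezing step and the cofactor computation you flag as delicate both go through as you describe.
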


\section{Multiple commutation relations}

We derive the following multiple commutation relations 
for the quantum affine algebra $U_q(\widehat{\mathfrak{gl}}_N)$.
\begin{theorem} \label{thmmultiplecommutationrelations}
The following commutation relation holds:
\begin{align}
&T_{N1}(\overline{u}^1) T_{N2}(\overline{u}^2) \cdots T_{NN}(\overline{u}^N)
=\sum_{\{ \overline{u}^1, \overline{u}^2,\dots,\overline{u}^N \} \mapsto \{ \overline{v}^1,
\overline{v}^2,\dots,\overline{v}^N \} } 
\frac{1}{\prod_{1 \le j < k \le N} (\overline{v}^k-\overline{v}^{j})(q \overline{v}^j-q^{-1} \overline{v}^{k-1})}
\nonumber \\
\times&
\frac{\prod_{j=1}^{N-1} (q \overline{u}^N-q^{-1} \overline{v}^j)}{
\prod_{j=1}^{N-1} (q \overline{u}^j-q^{-1} \overline{v}^N)
\prod_{\ell=1}^{N-2}
\prod_{j=1}^\ell \prod_{k=1}^{\ell+1} (q \overline{u}^j-q^{-1} \overline{u}^k)}
\nonumber \\
\times&W(
\overline{u}^1,\overline{u}^1 \cup \overline{u}^2,\dots,\overline{u}^1 \cup \cdots \cup \overline{u}^{N-1}
|\overline{v}^1,\overline{v}^2,\dots,\overline{v}^{N-1},\overline{v}^N|1^{|\overline{u}^1|},2^{|\overline{u}^2|},\dots,(N-1)^{|\overline{u}^{N-1}|},N^{|\overline{u}^{N}|} )
\nonumber \\
\times&
T_{NN}(\overline{v}^N) \cdots T_{N2}(\overline{v}^2)  T_{N1}(\overline{v}^1), 
\end{align}
where
$W(
\overline{u}^1,\overline{u}^1 \cup \overline{u}^2,\dots,\overline{u}^1 \cup \cdots \cup \overline{u}^{N-1}
|\overline{v}^1,\overline{v}^2,\dots,\overline{v}^{N-1},\overline{v}^N|1^{|\overline{u}^1|},2^{|\overline{u}^2|},\dots,(N-1)^{|\overline{u}^{N-1}|},N^{|\overline{u}^{N}|} )$ are specializations of the trigonometric weight functions
\eqref{trigonometric-wavefunction}.
\end{theorem}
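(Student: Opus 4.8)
The plan is to follow exactly the strategy outlined in the introduction: reduce the identity to a statement about partition functions built from the vector representation, and then invoke Theorem~\ref{offshellbethetrigonometricweight} to identify the resulting partition function with the trigonometric weight function. The first step is to observe that \eqref{fundcommone}--\eqref{fundcommtwo} involve only the operators $T_{N1},\dots,T_{NN}$ and their structure constants, so the product $T_{N1}(\overline{u}^1)\cdots T_{NN}(\overline{u}^N)$ can be rewritten, by repeated application of \eqref{fundcommone} (to move the ``later'' operators to the left) and \eqref{fundcommtwo} (to reorder operators with the same second index), as a sum $\sum_{\{\overline{u}^1,\dots,\overline{u}^N\}\mapsto\{\overline{v}^1,\dots,\overline{v}^N\}} C(\overline{u}\mid\overline{v})\, T_{NN}(\overline{v}^N)\cdots T_{N1}(\overline{v}^1)$, where the coefficients $C(\overline{u}\mid\overline{v})$ are rational functions in $q$ and the spectral variables that are \emph{the same for every representation of the $L$-operator}. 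This representation-independence is the key conceptual point: it follows because the rewriting only uses the abstract relations \eqref{eq:RTT++}.

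Second, I would extract each coefficient $C(\overline{u}\mid\overline{v})$ by specializing to a carefully chosen tensor product of vector representations $T^{\mathrm{vect}}(u;\overline{\xi})$ and then contracting with appropriate basis vectors $e^*_{\cdots}$ and $e_{\cdots}$. Concretely, the idea is to pick the $\overline{\xi}$ so that the quantum-space labels are forced into a single term of the partition $\{\overline{u}\}\mapsto\{\overline{v}\}$: applying $T_{NN}(\overline{v}^N)\cdots T_{N1}(\overline{v}^1)$ to a highest-weight-type reference vector in the vector representation raises colors from $N$ down in a way that singles out the ``domain-wall'' configuration, while the left-hand side $T_{N1}(\overline{u}^1)\cdots T_{NN}(\overline{u}^N)$ produces a sum whose summands are exactly the rectangular-grid partition functions $\psi(\overline{u}^1,\overline{u}^1\cup\overline{u}^2,\dots\mid\overline{v}\mid\boldsymbol I)$ from Figure~\ref{fig:partitionfn}, with the color string $\boldsymbol I = (1^{|\overline{u}^1|},\dots,N^{|\overline{u}^N|})$ as stated. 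Here the nested structure of the grid (layer $j$ carrying auxiliary variables $\overline{u}^1\cup\cdots\cup\overline{u}^j$) arises precisely because each $T_{Nj}$ in the vector representation is a product of $R$-matrices, and stacking these $R$-matrices on the grid reproduces the $(N-1)$-layered picture. The normalization factors — the denominator $\prod_{1\le j<k\le N}(\overline{v}^k-\overline{v}^j)(q\overline{v}^j-q^{-1}\overline{v}^{k-1})$, the ratio of $(q\overline{u}^N-q^{-1}\overline{v}^j)$ over $(q\overline{u}^j-q^{-1}\overline{v}^N)$, and the product over $\prod_{j=1}^\ell\prod_{k=1}^{\ell+1}(q\overline{u}^j-q^{-1}\overline{u}^k)$ — come from the ``extra'' $R$-matrix weights that appear when relating the raw contracted partition function to the normalized wavefunction $\psi$, together with the diagonal weights picked up from the reference state; tracking these overall factors carefully is the bookkeeping heart of the argument.

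Third, once the summand is identified as $\psi(\overline{u}^1,\overline{u}^1\cup\overline{u}^2,\dots,\overline{u}^1\cup\cdots\cup\overline{u}^{N-1}\mid\overline{v}^1,\dots,\overline{v}^N\mid 1^{|\overline{u}^1|},\dots,N^{|\overline{u}^N|})$, Theorem~\ref{offshellbethetrigonometricweight} immediately rewrites it as the trigonometric weight function $W(\cdots)$, giving the claimed formula. A subtlety is that one must check the chosen specialization of vector representations is ``rich enough'' to pin down \emph{all} coefficients simultaneously — i.e.\ that the map from abstract coefficients to contracted partition functions is injective on the relevant span — which one handles by letting $|\overline{v}|$ and the inhomogeneities $\overline{\xi}$ vary and using that the weight functions for different color strings $\boldsymbol I$ are linearly independent as rational functions.

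The step I expect to be the main obstacle is the precise matching of the overall prefactors: showing that the raw partition function obtained from contracting $T^{\mathrm{vect}}$ equals $\psi$ times \emph{exactly} the product of $(q\overline{u}-q^{-1}\overline{u})$, $(q\overline{u}-q^{-1}\overline{v})$ and $(\overline{v}-\overline{v})(q\overline{v}-q^{-1}\overline{v})$ factors written in the theorem, rather than some other equivalent-looking rational function. This requires carefully accounting for (i) the diagonal $R$-matrix weights $qu-q^{-1}v$ on the lines of the grid that are not part of the combinatorial core of $\psi$, (ii) the weights produced when the operators $T_{Nj}$ act trivially (pass straight through) on the reference vector, and (iii) the normalization convention implicit in defining $\psi$ versus $W$. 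I would organize this by first doing the rank-one ($N=2$) case explicitly — where the grid degenerates to a single layer, $\psi$ reduces to the domain-wall partition function $H(\overline{u}\mid\overline{v})$, and Theorem~\ref{domainwallpartitionIKdet} gives the Izergin–Korepin determinant form, explaining the determinant expression mentioned in the introduction — and then lifting the bookkeeping layer by layer to general $N$ by induction on the number of layers (not on the number of $L$-operators, which is the novelty of the approach).
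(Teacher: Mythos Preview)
Your overall architecture---representation-independence of the coefficients, extraction via the vector representation, and then identification with $W$ via Theorem~\ref{offshellbethetrigonometricweight}---matches the paper. But the middle step, where you claim the contracted left-hand side directly produces the nested partition function $\psi$, is where the argument breaks.

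When you act $T_{N1}(\overline{u}^1;\overline{\xi})\cdots T_{N,N-1}(\overline{u}^{N-1};\overline{\xi})$ on $e_{N^n}$ and contract with a dual basis vector, you get a \emph{single} rectangular grid: each $T_{Nj}$ contributes $|\overline{u}^j|$ rows, all sharing the same $n$ columns with quantum parameters $\overline{\xi}$. The auxiliary variables on the rows are $\overline{u}^1,\overline{u}^2,\dots,\overline{u}^{N-1}$, \emph{not} the nested unions $\overline{u}^1,\overline{u}^1\cup\overline{u}^2,\dots$. This rectangular object is what the paper calls $H$ (or its slight enlargement $K$). It is \emph{not} the $(N-1)$-layered nested wavefunction $\psi$ of Figure~\ref{fig:partitionfn}: the latter has genuinely different geometry, with layer $p$ carrying auxiliary variables $\overline{u}^1\cup\cdots\cup\overline{u}^p$ and quantum variables coming from the layer above. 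Your sentence ``stacking these $R$-matrices on the grid reproduces the $(N-1)$-layered picture'' conflates the two.

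The missing ingredient is a nontrivial collapsing argument (the paper's Proposition~\ref{relateoffshellbetherectangular}, essentially Borodin--Wheeler's Proposition~7.1.1): when the auxiliary variables of $\psi$ are specialized to the nested unions, the diagonal $R$-matrices $R(u,u)=(q-q^{-1})u\,P$ become permutation operators, and repeated use of unitarity unravels layers $1,\dots,N-2$ entirely, leaving only the top layer---which is the rectangular $K$---times the product $\prod_{\ell=1}^{N-2}\prod_{j=1}^\ell\prod_{k=1}^{\ell+1}(q\overline{u}^j-q^{-1}\overline{u}^k)$. This factor is precisely the one in the denominator of the theorem that you flagged as ``bookkeeping''; it is not bookkeeping but the output of this collapsing step.

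Separately, your isolation mechanism (linear independence of weight functions over varying $\overline{\xi}$) is vaguer than what the paper does. The paper picks $\overline{\xi}$ to be one fixed partition $\overline{v}_0^1,\dots,\overline{v}_0^{N-1}$ and shows by a freezing argument that on the reordered side only the single summand with $\overline{v}^j=\overline{v}_0^j$ survives, directly reading off the coefficient. This avoids any appeal to linear independence and is what makes the prefactor $\prod_{j<k}(\overline{v}^k-\overline{v}^j)(q\overline{v}^j-q^{-1}\overline{v}^{k-1})$ appear cleanly.
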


Recall that
\[
\sum_{\{\overline{u}^1,\dots,\overline{u}^N\}\mapsto\{\overline{v}^1,\dots,\overline{v}^N\}}
\]
denotes the sum over all partitions 
$\{\overline{u}^1,\dots,\overline{u}^N\}\mapsto\{\overline{v}^1,\dots,\overline{v}^N\}$
satisfying
$|\overline{v}^j|=|\overline{u}^j|$, $j=1,\dots,N$.

The proof of Theorem \ref{thmmultiplecommutationrelations}
is lengthy and follows from combining
Proposition \ref{keypropositionforproof}
and Proposition \ref{keyrelationforproof}.

Proving 
Proposition \ref{keypropositionforproof}
is the first step,
which essentially identifies every coefficient of the summands
as a certain partition function on a rectangular grid.

\begin{proposition} \label{keypropositionforproof}
The following commutation relation holds:
\begin{align}
&T_{N1}(\overline{u}^1) T_{N2}(\overline{u}^2) \cdots T_{NN}(\overline{u}^N)
=\sum_{\{ \overline{u}^1, \overline{u}^2,\dots,\overline{u}^N \} \mapsto \{ \overline{v}^1,
\overline{v}^2,\dots,\overline{v}^N \} } 
\frac{1}{\prod_{1 \le j < k \le N} (\overline{v}^k-\overline{v}^{j})(q \overline{v}^j-q^{-1} \overline{v}^{k-1})}
\nonumber \\
\times&
\prod_{j=1}^{N-1} (q \overline{u}^N-q^{-1} \overline{v}^j)
H(\overline{u}^1, \overline{u}^2,\dots,\overline{u}^{N-1}|\overline{v}^1,\overline{v}^2,\dots,\overline{v}^{N-1})
T_{NN}(\overline{v}^N) \cdots T_{N2}(\overline{v}^2)  T_{N1}(\overline{v}^1), 
\end{align}
where
\begin{align}
&H(\overline{u}^1, \overline{u}^2,\dots,\overline{u}^{N-1}|\overline{v}^1,\overline{v}^2,\dots,\overline{v}^{N-1}) \nonumber \\
=&
e_{1^{|\overline{u}^1|},2^{|\overline{u}^2|},\dots,(N-1)^{|\overline{u}^{N-1}|}}^*
T_{N1}(\overline{u}^1;\overline{v}^1,\overline{v}^2,\dots,\overline{v}^{N-1}) 
T_{N2}(\overline{u}^2;\overline{v}^1,\overline{v}^2,\dots,\overline{v}^{N-1}) 
\nonumber \\
&\times \cdots \times
T_{NN-1}(\overline{u}^{N-1};\overline{v}^1,\overline{v}^2,\dots,\overline{v}^{N-1}) 
e_{N^{|\overline{u}^1|+|\overline{u}^2|+\cdots+|\overline{u}^{N-1}|}}.
\label{commcoeffratpart}
\end{align}
Here,
$T_{jk}(u;\overline{v}^1,\overline{v}^2,\dots,\overline{v}^{N-1})$ denotes
\begin{align}
T_{jk}(u;\overline{v}^1,\overline{v}^2,\dots,\overline{v}^{N-1})
=T_{jk}^{\mathrm{vect}}(u;\overline{\xi}),
\end{align}
where
$\{ \xi_1,\dots,\xi_{|\overline{v}^1|} \}=\overline{v}^1, \{ \xi_{|\overline{v}^1|+1},\dots,\xi_{
|\overline{v}^1|+|\overline{v}^{2}|} \}=\overline{v}^{2},
\dots, \{ \xi_{|\overline{v}^1|+\cdots+|\overline{v}^{N-2}|+1},\dots, \xi_{|\overline{v}^1|+\cdots+|\overline{v}^{N-1}|}
\}=\overline{v}^{N-1}$.

\end{proposition}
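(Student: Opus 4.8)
\textbf{Proof plan for Proposition \ref{keypropositionforproof}.}
The plan is to derive the stated multiple commutation relation by iterating the basic two-term relation \eqref{fundcommone} and then repackaging the resulting sum of products of $L$-operator elements. First I would focus on the key structural observation highlighted in the introduction: the coefficients in any multiple commutation relation among the $T_{Nj}$ are fixed by the algebra $U_q(\widehat{\mathfrak{gl}}_N)$ alone and hence are independent of the representation chosen for $T(u)$. Concretely, starting from the left-hand side $T_{N1}(\overline{u}^1)\cdots T_{NN}(\overline{u}^N)$ and repeatedly applying \eqref{fundcommone} to move lower-index operators to the right, one obtains an expression of the form $\sum T_{NN}(\overline{v}^N)\cdots T_{N1}(\overline{v}^1)$ summed over partitions $\{\overline{u}^1,\dots,\overline{u}^N\}\mapsto\{\overline{v}^1,\dots,\overline{v}^N\}$, with scalar coefficients $c(\overline{u}\,|\,\overline{v})$ that are rational functions of the spectral parameters only. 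The task is to identify these $c(\overline{u}\,|\,\overline{v})$.

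Next I would extract each coefficient by specializing to the vector representation $T^{\mathrm{vect}}(u;\overline{\xi})$. The idea is to choose the inhomogeneities $\overline{\xi}$ and to contract both sides against suitable orthonormal basis vectors so that on the right-hand side only a single term of the partition-selecting sum survives. Specifically, one takes $\overline{\xi}=\overline{v}^1\cup\cdots\cup\overline{v}^{N-1}$ arranged in blocks as in the statement, contracts the quantum spaces carrying $\overline{v}^j$ with the basis covector $e^*$ and vector $e$ having color pattern $1^{|\overline{v}^1|},2^{|\overline{v}^2|},\dots,(N-1)^{|\overline{v}^{N-1}|}$, and uses the triangularity of $T^{\mathrm{vect}}_{Nj}$ (the ice-rule structure of $R$, together with the fact that $T_{Nj}$ lowers colors) to check that exactly one partition contributes. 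On that term, $T_{NN}(\overline{v}^N)\cdots T_{N1}(\overline{v}^1)$ acting on the highest-weight-type vector produces a known scalar, while the left-hand side becomes precisely the partition function $H(\overline{u}^1,\dots,\overline{u}^{N-1}|\overline{v}^1,\dots,\overline{v}^{N-1})$ defined in \eqref{commcoeffratpart}. Solving for $c(\overline{u}\,|\,\overline{v})$ then yields the claimed product of $R$-matrix weights $(q\overline{u}^N-q^{-1}\overline{v}^j)$, the denominator $\prod_{j<k}(\overline{v}^k-\overline{v}^j)(q\overline{v}^j-q^{-1}\overline{v}^{k-1})$, and the factor $H$.

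The bookkeeping for the iteration of \eqref{fundcommone} is routine in spirit but delicate in practice: one must track how the two coefficients $\tfrac{qv-q^{-1}u}{v-u}$ and $-\tfrac{u(q-q^{-1})}{v-u}$ in \eqref{fundcommone} combine when many operators are reordered, and confirm that after summation over all intermediate reorderings the net coefficient of a given final ordering depends only on the induced partition and not on the path taken. I would organize this as an induction on $N$ (or equivalently on the number of distinct column indices), peeling off $T_{N1}(\overline{u}^1)$ first and moving it past $T_{N2}(\overline{u}^2)\cdots T_{NN}(\overline{u}^N)$ using \eqref{fundcommone}, together with \eqref{fundcommtwo} to freely permute equal-index operators; the symmetry of $H$ in each $\overline{u}^j$ and in each $\overline{v}^j$ stated after \eqref{domainwallpartitionIKdet} (here its higher-rank analogue) guarantees the result is well defined.

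The main obstacle I anticipate is the combinatorial verification that the single surviving term in the vector-representation specialization is genuinely unique and that the accompanying overall scalar is exactly the product of denominators appearing in the statement. This requires a careful analysis of which color configurations are forced by the triangular action of $T^{\mathrm{vect}}_{Nj}$ on the chosen basis vectors, and a matching of normalization factors coming from the diagonal $R$-matrix entries $[R(u,v)]^{ii}_{ii}=qu-q^{-1}v$ along the grid. Once uniqueness and the scalar are pinned down, the identification of the left-hand side with $H(\overline{u}^1,\dots,\overline{u}^{N-1}|\overline{v}^1,\dots,\overline{v}^{N-1})$ is immediate from the definition \eqref{commcoeffratpart}, and Proposition \ref{keypropositionforproof} follows.
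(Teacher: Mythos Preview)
Your proposal is correct and matches the paper's approach almost step for step: write the general expansion with representation-independent coefficients $G(\overline{u}\,|\,\overline{v})$, specialize to the vector representation with quantum-space parameters $\overline{\xi}=\overline{v}^1_0\cup\cdots\cup\overline{v}^{N-1}_0$ for a fixed partition, sandwich between $e^*_{1^{|\overline{u}^1|},\dots,(N-1)^{|\overline{u}^{N-1}|}}$ and $e_{N^{|\overline{u}^1|+\cdots+|\overline{u}^{N-1}|}}$, and show a single partition survives on the right.

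Two small sharpenings the paper makes explicit that you should incorporate. First, the uniqueness of the surviving term is established not by a general triangularity argument but by computing the right-hand side matrix element directly: each summand contains the polynomial factor $\prod_{j}\prod_{k<j}(\overline{v}^j-\overline{v}^k_0)$, which forces $\overline{v}^N=\overline{v}^N_0$, then $\overline{v}^{N-1}=\overline{v}^{N-1}_0$, and so on down; the residual domain-wall blocks $W_j(\overline{v}^j_0;\overline{v}^j_0)$ then freeze to $(q\overline{v}^j_0-q^{-1}\overline{v}^j_0)$. Second, the factor $\prod_j(q\overline{u}^N-q^{-1}\overline{v}^j)$ does not come from the coefficient $G$ but from the left-hand side: the paper first obtains a partition function $F$ involving all of $T_{N1},\dots,T_{NN}$, then peels off $T_{NN}(\overline{u}^N)$ acting on $e_{N^{\cdots}}$ to produce this product times $H$. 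Your suggested induction on $N$ to justify the general form \eqref{generalform} is extra rigor the paper does not supply (it simply asserts the form exists ``in principle''), and is unnecessary once you observe that only \eqref{fundcommone} and \eqref{fundcommtwo} are used in the reordering.
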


\begin{proof}
We can see that in principle,
we can commute $L$-operator elements into the following form
\begin{align}
&T_{N1}(\overline{u}^1) T_{N2}(\overline{u}^2) \cdots  T_{NN}(\overline{u}^N)
\nonumber \\
=&\sum_{\{ \overline{u}^1, \overline{u}^2,\dots,\overline{u}^N \} \mapsto \{ \overline{v}^1,
\overline{v}^2,\dots,\overline{v}^N \} }
G(\overline{u}^1, \overline{u}^2,\dots,\overline{u}^N|\overline{v}^1,\overline{v}^2,\dots,\overline{v}^N)
T_{NN}(\overline{v}^N) \cdots T_{N2}(\overline{v}^2) T_{N1}(\overline{v}^1), 
\label{generalform}
\end{align}
using only \eqref{fundcommone} and \eqref{fundcommtwo},
and the problem is to determine the coefficients $G(\overline{u}^1, \overline{u}^2,\dots,\overline{u}^N|\overline{v}^1,\overline{v}^2,\dots,\overline{v}^N)$, which we determine
as follows.
First, we observe that the coefficients
$G(\overline{u}^1, \overline{u}^2,\dots,\overline{u}^N|\overline{v}^1,\overline{v}^2,\dots,\overline{v}^N)$
are independent of the representation of the
$L$-operator $T(u)$, that is, the representation of the elements $T_{jk}(u)$,
since we can get the form \eqref{generalform}
in principle by using only \eqref{fundcommone} and \eqref{fundcommtwo}.
We take $T_{jk}(u)$ to be $T_{jk}(u;\overline{v}^1_0,\overline{v}^2_0,\dots,\overline{v}^{N-1}_0)$
where $\{ \overline{v}^1_0,
\overline{v}^2_0,\dots,\overline{v}^N_0 \} $ is one particular choice of $\{ \overline{v}^1,
\overline{v}^2,\dots,\overline{v}^N \} $.
The ordering of variables within each subset $\overline{v}^j$ ($j=1,\dots,N$) can be arbitrary, but we must fix
and use the same ordering for all $L$-operator elements.

We act both hand sides of \eqref{generalform} on
$e_{N^{|\overline{u}^1|+|\overline{u}^2|+\cdots+|\overline{u}^{N-1}|}}$
and take coefficients of
 $e_{1^{|\overline{u}^1|},2^{|\overline{u}^2|},\dots,(N-1)^{|\overline{u}^{N-1}|}}$
of the resulting states.
The left-hand side becomes
$F(\overline{u}^1, \overline{u}^2,\dots,\overline{u}^N|\overline{v}^1_0,\dots,\overline{v}^{N-1}_0)$,
where
\begin{align}
&F(\overline{u}^1, \overline{u}^2,\dots,\overline{u}^N|\overline{v}^1,\dots,\overline{v}^{N-1}) \nonumber \\
:=&
e_{1^{|\overline{u}^1|},2^{|\overline{u}^2|},\dots,(N-1)^{|\overline{u}^{N-1}|}}^*
T_{N1}(\overline{u}^1;\overline{v}^1,\overline{v}^2,\dots,\overline{v}^{N-1}) 
T_{N2}(\overline{u}^2;\overline{v}^1,\overline{v}^2,\dots,\overline{v}^{N-1}) 
\times \cdots 
 \nonumber \\
&\times
T_{NN}(\overline{u}^N;\overline{v}^1,\overline{v}^2,\dots,\overline{v}^{N-1}) 
e_{N^{|\overline{u}^1|+|\overline{u}^2|+\cdots+|\overline{u}^{N-1}|}}.
\label{lefthandsidepartitionfunction}
\end{align}

On the other hand,
one can see the right-hand side can be expressed as
\begin{align}
&\sum_{\{ \overline{u}^1, \overline{u}^2,\dots,\overline{u}^N \} \mapsto \{ \overline{v}^1,
\overline{v}^2,\dots,\overline{v}^N \} } 
G(\overline{u}^1, \overline{u}^2,\dots,\overline{u}^N|\overline{v}^1,\overline{v}^2,\dots,\overline{v}^N)
\nonumber \\
&\times
e_{1^{|\overline{u}^1|},2^{|\overline{u}^2|},\dots,(N-1)^{|\overline{u}^{N-1}|}}^*
T_{NN}(\overline{v}^N;\overline{v}_0^1,\overline{v}_0^2,\dots,\overline{v}_0^{N-1}) 
T_{N N-1}(\overline{v}^{N-1};\overline{v}_0^1,\overline{v}_0^2,\dots,\overline{v}_0^{N-1}) 
\times \cdots 
 \nonumber \\
&\times
T_{N1}(\overline{v}^1;\overline{v}_0^1,\overline{v}_0^2,\dots,\overline{v}_0^{N-1}) 
e_{N^{|\overline{u}^1|+|\overline{u}^2|+\cdots+|\overline{u}^{N-1}|}} \nonumber \\
=&\sum_{\{ \overline{u}^1, \overline{u}^2,\dots,\overline{u}^N \} \mapsto \{ \overline{v}^1,
\overline{v}^2,\dots,\overline{v}^N \} }
G(\overline{u}^1, \overline{u}^2,\dots,\overline{u}^N|\overline{v}^1,\overline{v}^2,\dots,\overline{v}^N)
\nonumber \\
\times&\prod_{j=1}^N
\Bigg\{
e_{
1^{|\overline{u}^1|},
2^{|\overline{u}^{2}|},\dots,
j^{|\overline{u}^{j}|},
N^{|\overline{u}^{j+1}|
+\cdots+|\overline{u}^{N-1}|
}
}^*
T_{Nj}(\overline{v}^j;\overline{v}^1_0,\overline{v}^2_0,\dots,\overline{v}^{N-1}_0)
e_{
1^{|\overline{u}^1|},2^{|\overline{u}^2|},  \dots,
(j-1)^{|\overline{u}^{j-1}|},
N^{|\overline{u}^j|
+\cdots+|\overline{u}^{N-1}|
}
}
\Bigg\}
\nonumber \\
=&\sum_{\{ \overline{u}^1, \overline{u}^2,\dots,\overline{u}^N \} \mapsto \{ \overline{v}^1,
\overline{v}^2,\dots,\overline{v}^N \} }
G(\overline{u}^1, \overline{u}^2,\dots,\overline{u}^N|\overline{v}^1,\overline{v}^2,\dots,\overline{v}^N)
\nonumber \\
\times&\prod_{j=1}^{N-1} \Bigg\{ \prod_{k=1}^{j-1}
(\overline{v}^j-\overline{v}^k_0)
W_j(\overline{v}^j;\overline{v}^j_0)
\prod_{k=j+1}^{N-1} (q \overline{v}^j-q^{-1} \overline{v}^k_0) \Bigg\}
\prod_{k=1}^{N-1}(\overline{v}^N-\overline{v}_0^k), \label{righthandsideelements}
\end{align}
where 
\begin{align}
W_j(\overline{v}^j;\overline{v}^j_0)=e_{j^{|\overline{u}^j|}}^* T_{Nj}(\overline{v}^j;\overline{v}^j_0)e_{N^{|\overline{u}^j|}}.
\end{align}

In some more detail of how to get the right-hand side,
we first compute \\
 $e_{1^{|\overline{u}^1|},2^{|\overline{u}^2|},\dots,(N-1)^{|\overline{u}^{N-1}|}}^*
T_{NN}(\overline{v}^N;\overline{v}^1_0,\overline{v}^2_0,\dots,\overline{v}^{N-1}_0)
$. 
Applying \eqref{icerule}, for $\ell \neq N$,
$R_{Nj}^{k \ell}=0$ unless $j=\ell$, $k=N$.
Using this property,
we can see the action is uniquely determined and
only the matrix elements $R_{Nj}^{Nj}$, $j=1,\dots,N-1$ appear, and we get
\begin{align}
e_{1^{|\overline{u}^1|},2^{|\overline{u}^2|},\dots,(N-1)^{|\overline{u}^{N-1}|}}^*
T_{NN}(\overline{v}^N;\overline{v}^1_0,\overline{v}^2_0,\dots,\overline{v}^{N-1}_0)
=\prod_{k=1}^{N-1}
(\overline{v}^N-\overline{v}^k_0)
e_{1^{|\overline{u}^1|},2^{|\overline{u}^2|},\dots,(N-1)^{|\overline{u}^{N-1}|}}^*.
\end{align}
Next, consider $e_{1^{|\overline{u}^1|},2^{|\overline{u}^2|},\dots,(N-1)^{|\overline{u}^{N-1}|}}^*
T_{N N-1}(\overline{v}^{N-1};\overline{v}^1_0,\overline{v}^2_0,\dots,\overline{v}^{N-1}_0)$.
By a similar observation,
we note the action of $T_{N N-1}(\overline{v}^{N-1};\overline{v}^1_0,\overline{v}^2_0,\dots,\overline{v}^{N-1}_0)$
on the
dual quantum spaces except the last $|\overline{u}|^{N-1}$ spaces are uniquely determined,
and the matrix elements $R_{N-1, j}^{N-1, j}$, $j=1,\dots,N-2$ appear which give the factor
$\prod_{k=1}^{N-2}
(\overline{v}^{N-1}-\overline{v}^k_0)$.
As for the last $|\overline{u}^{N-1}|$ quantum spaces,
we need to consider
$e_{(N-1)^{|\overline{u}^{N-1}|}}^* T_{N,N-1}(\overline{v}^{N-1};\overline{v}^{N-1}_0)$.
By a simple observation,
%Since the number of colors must be conserved, 
we conclude
the resulting state must be $e_{N^{|\overline{u}^{N-1}|}}^*$ 
(the resulting state must be a linear combination of
$e_{i_1,i_2,\dots,i_{|\overline{u}^1|+\cdots+|\overline{u}^{N-1}|}}^*$ where $i_j =N-1,N$
since $T_{N-1,N}$ is constructed from the trigonometric $R$-matrix,
and the conservation of colors \eqref{icerule} lead to $i_j=N-1$ for all $j$),
and we get
\begin{align}
e_{(N-1)^{|\overline{u}^{N-1}|}}^* T_{N,N-1}(\overline{v}^{N-1};\overline{v}^{N-1}_0)=
W_{N-1}(\overline{v}^{N-1};\overline{v}^{N-1}_0) e_{N^{|\overline{u}^{N-1}|}}^*.
\end{align}
In total, the action on the dual basis is
\begin{align}
&e_{1^{|\overline{u}^1|},2^{|\overline{u}^2|},\dots,(N-1)^{|\overline{u}^{N-1}|}}^*
T_{N N-1}(\overline{v}^{N-1};\overline{v}^1_0,\overline{v}^2_0,\dots,\overline{v}^{N-1}_0)
\nonumber \\
=&\prod_{k=1}^{N-2}
(\overline{v}^{N-1}-\overline{v}^k_0) W_{N-1}(\overline{v}^{N-1};\overline{v}^{N-1}_0)
e_{
1^{|\overline{u}^1|},
2^{|\overline{u}^{2}|},\dots,
(N-2)^{|\overline{u}^{N-2}|},
N^{|\overline{u}^{N-1}|}
}
.
\end{align}
We can iterate this process to get

\begin{align}
&
e_{1^{|\overline{u}^1|},2^{|\overline{u}^2|},\dots,(N-1)^{|\overline{u}^{N-1}|}}^*
T_{NN}(\overline{v}^N;\overline{v}_0^1,\overline{v}_0^2,\dots,\overline{v}_0^{N-1}) 
T_{N N-1}(\overline{v}^{N-1};\overline{v}_0^1,\overline{v}_0^2,\dots,\overline{v}_0^{N-1}) 
\times \cdots 
 \nonumber \\
&\times
T_{N1}(\overline{v}^1;\overline{v}_0^1,\overline{v}_0^2,\dots,\overline{v}_0^{N-1}) 
e_{N^{|\overline{u}^1|+|\overline{u}^2|+\cdots+|\overline{u}^{N-1}|}} \nonumber \\
&=
\prod_{j=1}^{N-1} \Bigg\{ \prod_{k=1}^{j-1}
(\overline{v}^j-\overline{v}^k_0)
W_j(\overline{v}^j;\overline{v}^j_0)
\prod_{k=j+1}^{N-1} (q \overline{v}^j-q^{-1} \overline{v}^k_0) \Bigg\} \prod_{k=1}^{N-1}(\overline{v}^N-\overline{v}_0^k),
\label{righthandpartitionfunction}
\end{align}
and hence the right-hand side of
\eqref{righthandsideelements} follows.
See 
Figure \ref{fig:righthandpartitionfunction} 
for a graphical description of
\eqref{righthandpartitionfunction}.

\begin{figure}
	\centering
	\includegraphics{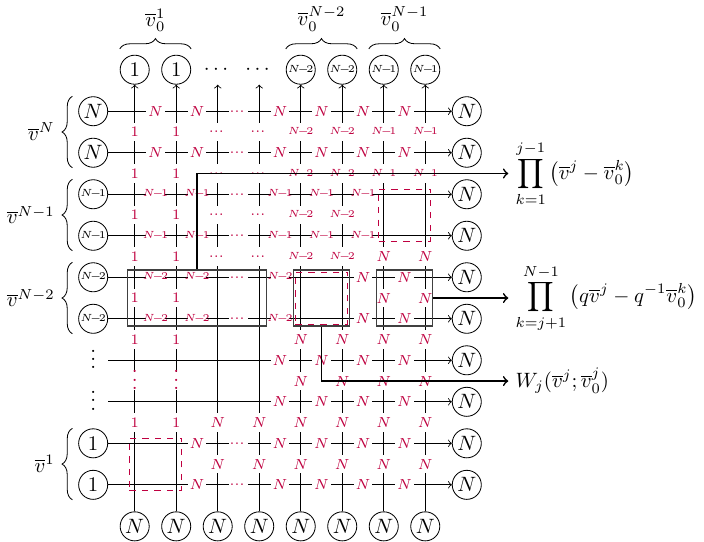}
	\caption{Graphical description of \eqref{righthandpartitionfunction}. The evaluation of the rows corresponding to parameters $\overline{v}^j$ with $j=N-2$ is shown explicitly.} \label{fig:righthandpartitionfunction}
\end{figure}

Next, note that each summand in the right-hand side of
\eqref{righthandsideelements} is a polynomial and
contains the factor
\begin{align}
&\prod_{j=1}^N 
\prod_{k=1}^{j-1} (\overline{v}^j-\overline{v}^k_0),
\end{align}
from which one can see only summands satisfying
$\overline{v}^N \cap \overline{v}^1_0=\overline{v}^N \cap \overline{v}^2_0=\cdots=\overline{v}^N \cap \overline{v}^{N-1}_0=\varnothing$
survive, i.e., $\overline{v}^N$ must be $\overline{v}^N=\overline{v}^N_0$.
Next, one can see nonzero summands should satisfy
$\overline{v}^{N-1} \cap \overline{v}^1_0=\cdots=\overline{v}^{N-1} \cap \overline{v}^{N-2}_0=\varnothing$,
and combining with $\overline{v}^N=\overline{v}^N_0$ one concludes $\overline{v}^{N-1}=\overline{v}^{N-1}_0$.
Repeating this argument, we find only the summand corresponding to $\overline{v}^j=\overline{v}^j_0$, $j=1,\dots,N$ survives.
We can also show
\begin{align}
W_j(\overline{v}^j_0;\overline{v}^j_0)=(q \overline{v}^j_0-q^{-1} \overline{v}^j_0).
\label{domainwallspecialcase}
\end{align}
See Figure~\ref{fig:domainwallspecialcase} for a graphical description to get
\eqref{domainwallspecialcase}.

\begin{figure}
	\centering
	\includegraphics{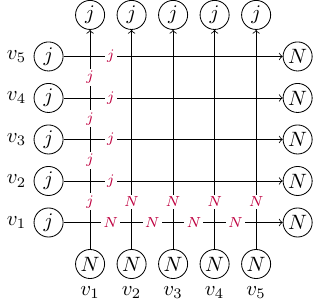}
	\qquad 
	\qquad 
	\includegraphics{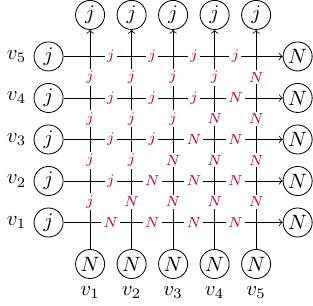}
	\caption{
		Graphical description of \eqref{domainwallspecialcase}. In the left panel, we observe that in lower left-most intersection point the spaces share the same spectral parameter $v_1$, which reduces the possible $R$-matrix elements to just a single choice. This then propagates throughout the first row and column due to color conservation \eqref{icerule}. The same is repeated for the second row and column, and so on, giving just a single term for the whole expression (right panel).
	 } \label{fig:domainwallspecialcase}
\end{figure}

Inserting \eqref{domainwallspecialcase},
the right-hand side of \eqref{righthandsideelements} reduces to the following single term
\begin{align}
G(\overline{u}^1, \overline{u}^2,\dots,\overline{u}^N|\overline{v}^1_0,\overline{v}^2_0,\dots,\overline{v}^N_0)
\prod_{j=1}^{N-1} \Bigg\{ \prod_{k=1}^{j-1}
(\overline{v}^j_0-\overline{v}^k_0)
\prod_{k=j}^{N-1} (q \overline{v}^j_0-q^{-1} \overline{v}^k_0) \Bigg\} \prod_{k=1}^{N-1}(\overline{v}_0^N-\overline{v}_0^k),
\end{align}
and comparing with \eqref{lefthandsidepartitionfunction} we get
\begin{align}
&F(\overline{u}^1, \overline{u}^2,\dots,\overline{u}^N|\overline{v}^1_0,\dots,\overline{v}^{N-1}_0)
\nonumber \\
=&G(\overline{u}^1, \overline{u}^2,\dots,\overline{u}^N|\overline{v}^1_0,\overline{v}^2_0,\dots,\overline{v}^N_0)
\prod_{j=1}^{N-1} \Bigg\{ \prod_{k=1}^{j-1}
(\overline{v}^j_0-\overline{v}^k_0)
\prod_{k=j}^{N-1} (q \overline{v}^j_0-q^{-1} \overline{v}^k_0) \Bigg\} \prod_{k=1}^{N-1}(\overline{v}_0^N-\overline{v}_0^k). \label{rationalcompone}
\end{align}

Next, from the following action
\begin{align}
&T_{NN}(\overline{u}^N;\overline{v}^1,\overline{v}^2,\dots,\overline{v}^{N-1}) 
e_{N^{|\overline{u}^1|+|\overline{u}^2|+\cdots+|\overline{u}^{N-1}|}} \nonumber \\
=&\prod_{j=1}^{N-1} (q \overline{u}^N-q^{-1} \overline{v}^j)
e_{N^{|\overline{u}^1|+|\overline{u}^2|+\cdots+|\overline{u}^{N-1}|}},
\end{align}
which is easy to see,
we get the relation
\begin{align}
&F(\overline{u}^1, \overline{u}^2,\dots,\overline{u}^N|\overline{v}^1,\dots,\overline{v}^{N-1}) \nonumber \\
=&
\prod_{j=1}^{N-1} (q \overline{u}^N-q^{-1} \overline{v}^j)
H(\overline{u}^1, \overline{u}^2,\dots,\overline{u}^{N-1}|\overline{v}^1,\dots,\overline{v}^{N-1}),
\label{rationalcomptwo}
\end{align}
where
$H(\overline{u}^1, \overline{u}^2, \dots,\overline{u}^{N-1}|\overline{v}^1,\dots,\overline{v}^{N-1}) 
$ is defined as \eqref{commcoeffratpart}.

Combining 
\eqref{rationalcompone}
and
\eqref{rationalcomptwo},
we have the following expression for the coefficients of the summands
\begin{align}
&G(\overline{u}^1, \overline{u}^2,\dots,\overline{u}^N|\overline{v}^1_0,\overline{v}^2_0,\dots,\overline{v}^N_0) 
\nonumber \\
=&
\frac{1}{\prod_{1 \le j < k \le N} (\overline{v}^k-\overline{v}^{j})(q \overline{v}^j-q^{-1} \overline{v}^{k-1})}
\prod_{j=1}^{N-1} (q \overline{u}^N-q^{-1} \overline{v}^j)
H(\overline{u}^1, \overline{u}^2,\dots,\overline{u}^{N-1}|\overline{v}^1,\dots,\overline{v}^{N-1}),
\end{align}
in \eqref{generalform}, and the claim follows.

\end{proof}

\clearpage

% \begin{figure}[ht]
% 	\centering
% 	\includegraphics{Diagrams/F_function.pdf}
% \end{figure}

% \begin{figure}[ht]
% 	\centering
% 	\includegraphics{Diagrams/G_function_markup.pdf}
% \end{figure}

The second step is
to relate the partition functions on a rectangular grid $H(\overline{u}^1, \overline{u}^2,\dots,\overline{u}^{N-1}|\overline{v}^1,\dots,\overline{v}^{N-1})$ \eqref{commcoeffratpart}
with the trigonometric weight functions \eqref{trigonometric-wavefunction}.
As the derivation is not short, we first
summarize it as Proposition \ref{keyrelationforproof}.
\begin{proposition} \label{keyrelationforproof}
The following holds:
\begin{align}
&H(\overline{u}^1, \overline{u}^2,\dots,\overline{u}^{N-1}|\overline{v}^1,\dots,\overline{v}^{N-1})
\nonumber \\
=&
\frac{1}{
\prod_{j=1}^{N-1} (q \overline{u}^j-q^{-1} \overline{v}^N)
\prod_{\ell=1}^{N-2}
\prod_{j=1}^\ell \prod_{k=1}^{\ell+1} (q \overline{u}^j-q^{-1} \overline{u}^k)}
\nonumber \\
\times& W(
\overline{u}^1,\overline{u}^1 \cup \overline{u}^2,\dots,\overline{u}^1 \cup \cdots \cup \overline{u}^{N-1}
|\overline{v}^1,\overline{v}^2,\dots,\overline{v}^{N-1},\overline{v}^N|1^{|\overline{u}^1|},2^{|\overline{u}^2|},\dots,(N-1)^{|\overline{u}^{N-1}|},N^{|\overline{u}^{N}|} ).
\end{align}

\end{proposition}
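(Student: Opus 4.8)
The plan is to prove Proposition~\ref{keyrelationforproof} by reading both sides as lattice partition functions and transforming one into the other with Yang--Baxter moves together with the same ``freezing'' mechanism already exploited in the proof of Proposition~\ref{keypropositionforproof}. By construction \eqref{commcoeffratpart}, the left-hand side $H(\overline{u}^1,\dots,\overline{u}^{N-1}|\overline{v}^1,\dots,\overline{v}^{N-1})$ is a matrix element of a product of vector-representation $L$-operator elements, hence a partition function on a single rectangular grid: the horizontal (auxiliary) lines split into $N-1$ blocks carrying $\overline{u}^1,\dots,\overline{u}^{N-1}$, the $j$-th block entering with colour $j$ and leaving with colour $N$, and the vertical (quantum) lines split into blocks carrying $\overline{v}^1,\dots,\overline{v}^{N-1}$, each entering with colour $N$ at the bottom and, against the chosen boundary covector, leaving with colour $j$ at the top. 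On the right-hand side, Theorem~\ref{offshellbethetrigonometricweight} identifies the trigonometric weight function with the nested off-shell wavefunction $\psi$, which is the staircase partition function of Section~2 specialised so that the $p$-th layer carries the auxiliary set $\overline{u}^1\cup\dots\cup\overline{u}^p$, the physical quantum spaces carry $\overline{v}^1,\dots,\overline{v}^{N-1},\overline{v}^N$, and the colouring is the staircase $1^{|\overline{u}^1|},\dots,(N-1)^{|\overline{u}^{N-1}|},N^{|\overline{u}^{N}|}$. It therefore suffices to prove the partition-function identity: $H$ equals the stated rational prefactor times $\psi$.

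The substance is to strip off from $\psi$ the sublattices that this specialisation makes ``frozen''. First, the quantum lines carrying $\overline{v}^N$ are coloured $N$; since $N$ is the maximal colour and these lines sit past every colour-creating crossing in the staircase, in the formula \eqref{trigonometric-wavefunction} they contribute only factors of the form $(qu-q^{-1}v)$ with $u\in\overline{u}^1\cup\dots\cup\overline{u}^{N-1}$ and $v\in\overline{v}^N$, i.e.\ the scalar $\prod_{j=1}^{N-1}(q\overline{u}^j-q^{-1}\overline{v}^N)$; this produces one denominator factor and, as a byproduct, confirms that the right-hand side does not depend on $\overline{v}^N$. Second, the specialisation forces, for each $p=1,\dots,N-2$, the auxiliary lines of the $p$-th layer to traverse a sublattice whose remaining edges carry colour $N$ and whose spectral parameters overlap with $\overline{u}^1\cup\dots\cup\overline{u}^p$; since $[R(u,u)]^{ab}_{ab}=0$ for $a\neq b$ in \eqref{trigRverone}, every crossing of this sublattice is pinned to a single admissible $R$-matrix element, so the sublattice freezes, exactly as in Figure~\ref{fig:domainwallspecialcase} and \eqref{domainwallspecialcase}, to the scalar $\prod_{1\le j\le p}\prod_{1\le k\le p+1}(q\overline{u}^j-q^{-1}\overline{u}^k)$; collecting these over $p$ yields the remaining denominator. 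What is left after the frozen edges are removed, once the induced permutation of which lines count as ``auxiliary'' and which as ``quantum'' is applied, is precisely the rectangular partition function $H$: the auxiliary lines freshly introduced at the $p$-th layer, namely $\overline{u}^p$, emerge (after permuting through their frozen partners) as the block of horizontal lines realising $T_{Np}(\overline{u}^p)$, and the colour constraints reproduce the boundary data of $H$.

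I expect the only real difficulty to be combinatorial bookkeeping: identifying exactly which crossings freeze, checking that the emitted scalar is exactly the claimed product (with the diagonal contributions $(q-q^{-1})u$ at $u=u'$ absorbed correctly into the $(q\overline{u}^j-q^{-1}\overline{u}^k)$ factors), and verifying that the un-frozen remainder reassembles into $H$ with the staircase colouring rather than a permuted one. A convenient way to organise this is induction on $N$: for $N=2$ the statement reduces, via Theorems~\ref{offshellbethetrigonometricweight} and~\ref{domainwallpartitionIKdet}, to the identity between the domain wall boundary partition function $H(\overline{u}^1|\overline{v}^1)$ and the monochrome weight function $W(\overline{u}^1|\overline{v}^1|1^{|\overline{u}^1|})$, times the trivial factor $\prod_{j}(q\overline{u}^j-q^{-1}\overline{v}^2)$ coming from the colour-$2$ lines $\overline{v}^2$; for the inductive step one peels off the top layer of $\psi$ together with the operator $T_{N1}(\overline{u}^1)$ from $H$, using the recursion satisfied by the trigonometric weight functions in the number of colours, so that the freezing analysis need only be carried out one layer at a time. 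In either organisation the analytic input is minimal---the Yang--Baxter relation \eqref{eq:YBE-spectral}, the vanishing $[R(u,u)]^{ab}_{ab}=0$ for $a\neq b$, and the already-established Theorems~\ref{offshellbethetrigonometricweight} and~\ref{domainwallpartitionIKdet}---and all the work lies in the lattice combinatorics.
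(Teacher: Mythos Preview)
Your high-level strategy matches the paper's: the identity is obtained from Theorem~\ref{offshellbethetrigonometricweight} together with two reductions, (i) stripping off the $\overline{v}^N$ columns (this is the paper's Lemma~\ref{slightlylarger}) and (ii) collapsing the inner layers $p=1,\dots,N-2$ of the specialised $\psi$ down to the single rectangular grid $K$ (this is the paper's Proposition~\ref{relateoffshellbetherectangular}, which follows \cite[Proposition~7.1.1]{BW}). Your step (i) is correct.

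There is, however, a gap in your description of step (ii). First a detail: in the $p$-th layer the quantum lines enter with colour $p+1$, not colour $N$; colour $N$ only appears in the top layer. More substantially, you assert that ``every crossing of this sublattice is pinned to a single admissible $R$-matrix element'' because $[R(u,u)]^{ab}_{ab}=0$ for $a\neq b$. That vanishing pins only the \emph{diagonal} crossings, where the auxiliary and quantum parameters coincide after reordering. The off-diagonal crossings in layer $p$---auxiliary parameter $u$ meeting quantum parameter $u'$ with $u\neq u'$ and both in $\overline{u}^1\cup\dots\cup\overline{u}^{p}$---are \emph{not} frozen by this mechanism, and a propagation argument in the style of Figure~\ref{fig:domainwallspecialcase} already fails at the second row of the layer. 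The mechanism the paper actually uses is different: on the diagonal one invokes the stronger fact that $R(u,u)=(q-q^{-1})u\sum_{i,j}E_{ij}\otimes E_{ji}$ is the \emph{permutation operator} times a scalar, so the diagonal vertices literally swap the two lines; after this re-wiring each pair of distinct-parameter lines in $\overline{u}^1\cup\dots\cup\overline{u}^{p}$ meets twice, and these double crossings are then removed by the \emph{unitarity relation} $R_{12}(u,v)R_{21}(v,u)=(qu-q^{-1}v)(qv-q^{-1}u)\,\mathrm{I}\otimes\mathrm{I}$. The scalar $\prod_{j\le p}\prod_{k\le p+1}(q\overline{u}^j-q^{-1}\overline{u}^k)$ arises as the combination of the diagonal permutation factors $(q-q^{-1})u$, the unitarity factors, and a final (genuine) freeze against the fresh block $\overline{u}^{p+1}$. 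Once you insert the unitarity step, your argument becomes exactly the paper's; without it the layer does not collapse.
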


\begin{proof}
This follows from combining Lemma \ref{slightlylarger} 
and Proposition \ref{relateoffshellbetherectangular}.
\end{proof}

Let us go to the details of
Lemma \ref{slightlylarger} 
and Proposition \ref{relateoffshellbetherectangular}.

%We show $H(\overline{u}^1, \overline{u}^2,\dots,\overline{u}^{N-1}|\overline{v}^1,\dots,\overline{v}^{N-1})$
%are given as specializations of weight functions and
%are components of the Gelfand-Tsetlin basis.

%For the moment, 
%we don't assume the relation between the sets of variables $\overline{u}^j$ and $\overline{v}^k$,
%i.e we don't assume the relation
%$\overline{u}^1 \cup \cdots \cup \overline{u}^N=\overline{v}^1 \cup \cdots \cup \overline{v}^N$.

\begin{figure}
	\centering
	\includegraphics{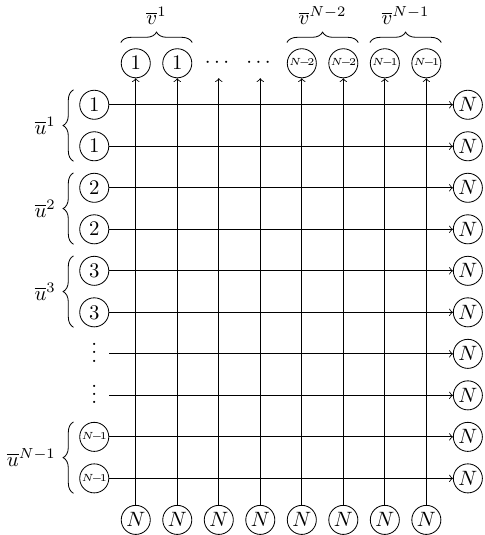}
	\caption{A graphical description of the partition function $H$.} \label{fig:H-function}
\end{figure}

\begin{figure}
	\centering
	\includegraphics{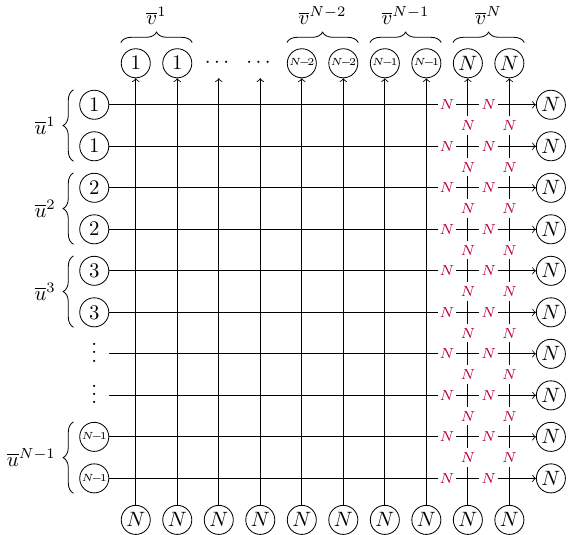}
	\caption{A graphical description of the enlarged partition function $K$. In the rightmost columns, the only possible color is $N$ due to color conservation \eqref{icerule}, which proves Lemma~\ref{slightlylarger}. } \label{fig:K-function}
\end{figure}

We introduce the following partition functions
\begin{align}
&K
(\overline{u}^1,\overline{u}^2,\dots,\overline{u}^{N-1}
|\overline{v}^1,\overline{v}^2,\dots,\overline{v}^{N-1},\overline{v}^N)
\nonumber \\
:=&e_{1^{|\overline{u}^1|},2^{|\overline{u}^2|},\dots,(N-1)^{|\overline{u}^{N-1}|},N^{|\overline{u}^{N}|} }^*
T_{N1}(\overline{u}^1;\overline{v}^1,\overline{v}^2,\dots,\overline{v}^{N-1},\overline{v}^N) 
T_{N2}(\overline{u}^2;\overline{v}^1,\overline{v}^2,\dots,\overline{v}^{N-1},\overline{v}^N) 
\nonumber \\
&\times \cdots \times
T_{NN-1}(\overline{u}^{N-1};\overline{v}^1,\overline{v}^2,\dots,\overline{v}^{N-1},\overline{v}^N) 
e_{N^{|\overline{u}^1|+|\overline{u}^2|+\cdots+|\overline{u}^{N-1}|+|\overline{u}^N|}}.
\end{align}
$K
(\overline{u}^1,\overline{u}^2,\dots,\overline{u}^{N-1}
|\overline{v}^1,\overline{v}^2,\dots,\overline{v}^{N-1},\overline{v}^N)$ is 
a slightly enlarged class of partition functions
(Figure~\ref{fig:K-function}),
and there is a simple relation with
$H(\overline{u}^1, \overline{u}^2,\dots,\overline{u}^{N-1}|\overline{v}^1,\dots,\overline{v}^{N-1})$.

\begin{lemma} \label{slightlylarger}
The following holds:
\begin{align}
&
K
(\overline{u}^1,\overline{u}^2,\dots,\overline{u}^{N-1}
|\overline{v}^1,\overline{v}^2,\dots,\overline{v}^{N-1},\overline{v}^N) \nonumber \\
=&\prod_{j=1}^{N-1} (q \overline{u}^j-q^{-1} \overline{v}^N)
H(\overline{u}^1, \overline{u}^2,\dots,\overline{u}^{N-1}|\overline{v}^1,\dots,\overline{v}^{N-1}).
\label{relationtwopartitionfunctionsgrid}
\end{align}
\end{lemma}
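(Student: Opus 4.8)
The plan is to compute the partition function $K$ directly by examining what happens in the $|\overline{u}^N|$ extra columns (those whose spectral variables belong to $\overline{v}^N$) and showing that they "freeze" to contribute exactly the overall factor $\prod_{j=1}^{N-1}(q\overline{u}^j - q^{-1}\overline{v}^N)$, after which the remaining grid is precisely the $H$ partition function. Concretely: in the definition of $K$, the product of $L$-operator elements $T_{N1}(\overline{u}^1;\dots)T_{N2}(\overline{u}^2;\dots)\cdots T_{N,N-1}(\overline{u}^{N-1};\dots)$ acts on the vector $e_{N^{\ldots}}$, which in particular has color $N$ on every quantum space carrying a variable from $\overline{v}^N$, and the dual covector also has color $N$ on exactly those same spaces. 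So I would argue that, for the columns indexed by $\overline{v}^N$, both the bottom and top colors are fixed to $N$.

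First I would set up the graphical picture (Figure~\ref{fig:K-function}): $K$ is a rectangular grid whose rows are labelled by $\overline{u}^1,\dots,\overline{u}^{N-1}$ (each $T_{Nj}$ contributing $|\overline{u}^j|$ rows, all with "auxiliary color" running from $N$ down), and whose columns are labelled by the concatenation $\overline{v}^1,\dots,\overline{v}^{N-1},\overline{v}^N$. The key observation is the conservation-of-colors property of the trigonometric $R$-matrix already exploited in the proof of Proposition~\ref{keypropositionforproof}: since $[R(u,v)]_{ij}^{k\ell}=0$ unless $i=k,\ j=\ell$ whenever $i\neq\ell$, and the only available colors on an auxiliary line of $T_{Nj}$ are $N$ and $j<N$, once the bottom boundary of a column is fixed to color $N$ the color $N$ propagates up that column unchanged through every crossing, each crossing contributing the diagonal weight $[R]_{Nj}^{Nj} = q\overline{u}^j - q^{-1}\overline{v}^N$ in the row labelled $\overline{u}^j$. (This is exactly the "freezing" phenomenon drawn in Figures~\ref{fig:righthandpartitionfunction} and \ref{fig:domainwallspecialcase}.) Summing the contribution over the $j=1,\dots,N-1$ rows and the $|\overline{u}^N|$ frozen columns gives the scalar $\prod_{j=1}^{N-1}(q\overline{u}^j - q^{-1}\overline{v}^N)$, noting our product-notation convention \eqref{product-notation} absorbs the multiplicities $|\overline{u}^j|$ and $|\overline{v}^N|$ automatically.

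Second, having removed the frozen columns, I would observe that the remainder is literally the partition function defining $H(\overline{u}^1,\dots,\overline{u}^{N-1}|\overline{v}^1,\dots,\overline{v}^{N-1})$ in \eqref{commcoeffratpart}: the surviving columns carry variables $\overline{v}^1,\dots,\overline{v}^{N-1}$, the surviving bottom boundary is $e_{N^{|\overline{u}^1|+\cdots+|\overline{u}^{N-1}|}}$, and the surviving top boundary is $e^*_{1^{|\overline{u}^1|},\dots,(N-1)^{|\overline{u}^{N-1}|}}$ — exactly the covector/vector pair in the definition of $H$. The only thing to check carefully is that the freezing of the $\overline{v}^N$-columns does not alter the state on the other columns, which again follows from the conservation of colors: a color-$N$ line passing through a crossing leaves the other line's color (and hence the overall weight structure on the non-frozen part) untouched. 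Assembling these two points yields \eqref{relationtwopartitionfunctionsgrid}.

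The main obstacle is bookkeeping rather than conceptual: one must be careful about the order of the $\overline{v}^N$-columns relative to the $\overline{v}^1,\dots,\overline{v}^{N-1}$-columns (the $K$ partition function is not symmetric in the full set of column variables, only within each $\overline{v}^j$), and verify that the $\overline{v}^N$-columns can indeed be taken to sit to the right of all the others so that the freezing argument is clean; by the Yang-Baxter relation the value is independent of the chosen ordering within each block, so it suffices to pick the convenient ordering. A secondary point to handle explicitly is that the dual boundary covector $e^*_{1^{|\overline{u}^1|},\dots,(N-1)^{|\overline{u}^{N-1}|},N^{|\overline{u}^N|}}$ indeed forces color $N$ precisely on the $\overline{v}^N$-columns and not on any of the $\overline{v}^j$-columns with $j<N$, which is built into the definition of $K$. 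Once these are pinned down the proof is the "big freeze" computation depicted in Figure~\ref{fig:K-function}.
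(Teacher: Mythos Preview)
Your approach is exactly the paper's: freeze the rightmost $|\overline{v}^N|$ columns, read off the diagonal weights, and identify the remainder as $H$. One correction worth noting: the frozen $R$-matrix element is $[R]_{NN}^{NN}=qu-q^{-1}v$, not $[R]_{Nj}^{Nj}$ (which equals $u-v$ for $j\neq N$); the auxiliary color in those columns is $N$, not $j$, because each $T_{Nj}$-row has its auxiliary output on the right equal to $N$, and the freezing is most cleanly argued from the top-right corner (auxiliary output $N$ and quantum output $N$ force input $(N,N)$, then propagate) rather than by upward propagation from the bottom alone.
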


\begin{proof}
This can be checked from a graphical description
of the left-hand side of \eqref{relationtwopartitionfunctionsgrid}.
One can easily see that the configuration on the right part is fixed uniquely,
and the $R$-matrix elements are all $R_{NN}^{NN}$ which give the factor
$\prod_{j=1}^{N-1} (q \overline{u}^j-q^{-1} \overline{v}^N)$ from the frozen part.
The remaining unfrozen part is
$H(\overline{u}^1, \overline{u}^2,\dots,\overline{u}^{N-1}|\overline{v}^1,\dots,\overline{v}^{N-1})$.
This means that the left-hand side of \eqref{relationtwopartitionfunctionsgrid}
can also be expressed by multiplying these factors.
\end{proof}

We next note that the following relation holds.
This is essentially the same as a Proposition
in Borodin-Wheeler \cite[Proposition 7.1.1]{BW}.
We use the standard trigonometric $R$-matrix.

\begin{proposition} \label{relateoffshellbetherectangular}
The following holds:
\begin{align}
&W(
\overline{u}^1,\overline{u}^1 \cup \overline{u}^2,\dots,\overline{u}^1 \cup \cdots \cup \overline{u}^{N-1}
|\overline{v}^1,\overline{v}^2,\dots,\overline{v}^{N-1},\overline{v}^N|1^{|\overline{u}^1|},2^{|\overline{u}^2|},\dots,(N-1)^{|\overline{u}^{N-1}|},N^{|\overline{u}^{N}|} ) \nonumber \\
=&
\prod_{\ell=1}^{N-2}
\prod_{j=1}^\ell \prod_{k=1}^{\ell+1} (q \overline{u}^j-q^{-1} \overline{u}^k)
K(\overline{u}^1, \overline{u}^2,\dots,\overline{u}^{N-1}|\overline{v}^1,\overline{v}^2,\dots,\overline{v}^{N-1},\overline{v}^N).
\end{align}
\end{proposition}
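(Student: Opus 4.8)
The plan is to identify both sides as partition functions built from the same trigonometric $R$-matrix on nested rectangular grids, and to show they agree by peeling off frozen regions. First I would recall that, by Theorem~\ref{offshellbethetrigonometricweight}, the trigonometric weight function $W(\overline{u}^1,\overline{u}^1\cup\overline{u}^2,\dots,\overline{u}^1\cup\cdots\cup\overline{u}^{N-1}\mid \overline{v}^1,\dots,\overline{v}^N\mid 1^{|\overline{u}^1|},\dots,N^{|\overline{u}^N|})$ equals the nested off-shell Bethe partition function $\psi$ with the same arguments. So it suffices to compare $\psi$ with the claimed power of $(q\overline{u}^j - q^{-1}\overline{u}^k)$ times $K$. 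The essential point is that in the $\psi$-picture the auxiliary-space variables in layer $p$ are $\overline{u}^1\cup\cdots\cup\overline{u}^p$, i.e.\ the layers are \emph{telescoping}: layer $p+1$ contains all the variables of layer $p$ plus the fresh block $\overline{u}^{p+1}$.

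The key step is a freezing argument applied layer by layer. Consider the lines carrying the "old" variables $\overline{u}^1\cup\cdots\cup\overline{u}^p$ as they pass through layer $p+1$: because the coloring is $1^{|\overline{u}^1|},\dots,N^{|\overline{u}^N|}$ and because the $R$-matrix obeys the strong conservation rule stated in the excerpt ($[R]_{ij}^{k\ell}=0$ unless $\{i,j\}=\{k,\ell\}$, refining the ice rule), the crossings between a line with variable in $\overline{u}^j$ and a line with variable in $\overline{u}^k$ with $j\le k$, $k\le \ell+1$ in the relevant region are forced into a unique configuration using only the diagonal matrix element $[R(u,v)]_{ij}^{ij}$ with $i<j$, which equals $u-v$, or into the $R_{ij}^{ij}$ type element evaluated so as to give exactly the factor $(q\overline{u}^j-q^{-1}\overline{u}^k)$. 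Accumulating over all layers $\ell=1,\dots,N-2$ and all pairs $1\le j\le \ell$, $1\le k\le \ell+1$ produces precisely the prefactor $\prod_{\ell=1}^{N-2}\prod_{j=1}^{\ell}\prod_{k=1}^{\ell+1}(q\overline{u}^j-q^{-1}\overline{u}^k)$. After removing this frozen part, what remains is the partition function in which, in each layer, only the \emph{fresh} block $\overline{u}^{p+1}$ is genuinely summed against the quantum spaces, which is exactly the definition of $K(\overline{u}^1,\dots,\overline{u}^{N-1}\mid\overline{v}^1,\dots,\overline{v}^N)$. I would present this with reference to the diagrams (Figures~\ref{fig:partitionfn} and \ref{fig:K-function}) rather than a formula-by-formula induction, and invoke \cite[Proposition 7.1.1]{BW} for the version of the statement established there, indicating how the conventions match.

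Concretely, I would proceed as follows. (1) Rewrite $W=\psi$ via Theorem~\ref{offshellbethetrigonometricweight}, so the left side becomes a lattice partition function with $N-1$ telescoping layers and the top-layer coloring $1^{|\overline{u}^1|},\dots,N^{|\overline{u}^N|}$. (2) For each $p$ from $1$ to $N-2$ in turn, locate inside layer $p+1$ the rectangular sub-block where the "old" horizontal lines $\overline{u}^1\cup\cdots\cup\overline{u}^p$ meet, and use the refined conservation rule together with the coloring to show this block is frozen; read off the scalar factor it contributes. (3) Check that the product of all these scalar factors is $\prod_{\ell=1}^{N-2}\prod_{j=1}^{\ell}\prod_{k=1}^{\ell+1}(q\overline{u}^j-q^{-1}\overline{u}^k)$, keeping careful track of which of $j,k$ plays the role of the "top" versus "bottom" index so the $q$-weights land with the correct orientation. (4) Identify the complementary unfrozen region with the grid defining $K$, matching the assignment of spectral variables $\overline{v}^1,\dots,\overline{v}^{N-1}$ to the quantum spaces exactly as in the statement of Proposition~\ref{keypropositionforproof}, so that the remaining partition function is $K(\overline{u}^1,\dots,\overline{u}^{N-1}\mid\overline{v}^1,\dots,\overline{v}^N)$.

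The main obstacle I anticipate is bookkeeping: making the telescoping layer structure of $\psi$ line up precisely with the way $K$ stacks the operators $T_{N1},\dots,T_{N,N-1}$, and in particular getting the $q$/$q^{-1}$ assignment in the frozen weights to match the asymmetric factor $\prod(q\overline{u}^j-q^{-1}\overline{u}^k)$ rather than its transpose. This amounts to fixing, once and for all, an ordering convention (which variable sits "below" which in each crossing) consistent with the definitions of $T^{\mathrm{vect}}$ and of $W$ in \eqref{trigonometric-wavefunction}, and then checking a small base case (say $N=3$) by hand to pin down orientations before asserting the general pattern. The Yang--Baxter symmetry in each $\overline{u}^j$ block guarantees well-definedness, so no genuine analytic difficulty arises; the content is entirely the combinatorics of the freezing, which is why citing and adapting \cite[Proposition 7.1.1]{BW} is the efficient route.
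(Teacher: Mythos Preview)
Your overall strategy matches the paper's: reduce $W$ to $\psi$ via Theorem~\ref{offshellbethetrigonometricweight}, then collapse the telescoping layers one by one until only the single rectangular grid $K$ remains, and you correctly identify \cite[Proposition~7.1.1]{BW} as the source of the trick. The product structure of the prefactor and its indexing over $\ell=1,\dots,N-2$ are also right.

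However, the mechanism you describe for collapsing each layer is not the one that actually works, and as stated it has a gap. You claim that the ``old'' block of auxiliary lines $\overline{u}^1\cup\cdots\cup\overline{u}^p$ inside layer $p+1$ is frozen \emph{by color conservation}, so that only diagonal $R$-matrix entries $[R(u,v)]_{ij}^{ij}$ survive. But in the interior of the nested partition function $\psi$ the colors on those lines are not externally prescribed; they are summed over, and the refined ice rule alone does not single out a unique configuration there. The paper's (and Borodin--Wheeler's) argument uses a different mechanism: after ordering the lines lexicographically, the \emph{diagonal} crossings in each layer have coinciding spectral parameters, so $R(u,u)=(q-q^{-1})u\sum_{i,j}E_{ij}\otimes E_{ji}$ is a scalar times the permutation operator. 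Replacing those vertices by permutations and then repeatedly applying the \emph{unitarity relation} $R_{12}(u,v)R_{21}(v,u)=(qu-q^{-1}v)(qv-q^{-1}u)\,\mathrm{I}\otimes\mathrm{I}$ literally unravels the lattice; this is what produces the factor $(q\overline{u}^{1}\cup\cdots\cup\overline{u}^{p}-q^{-1}\overline{u}^{1}\cup\cdots\cup\overline{u}^{p+1})$ at each step and leaves the single-layer $K$ at the end. Only after this unraveling does the remaining right-hand piece freeze via the trivial action on color $j{+}1$. So your step~(2) should be rewritten: it is not a color-freezing argument but a \emph{parameter-coincidence $\Rightarrow$ permutation $\Rightarrow$ unitarity-unraveling} argument. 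Once you make that substitution, the rest of your plan (steps (1), (3), (4), and the bookkeeping caveat) goes through and coincides with the paper's proof.
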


This follows as a specific case 
$\boldsymbol{I}
=(1^{|\overline{u}^1|},2^{|\overline{u}^2|},\dots,(N-1)^{|\overline{u}^{N-1}|},N^{|\overline{u}^{N}|})$
of a more generic relation between partition functions and Theorem \ref{offshellbethetrigonometricweight}.
For the description,
we introduce a partition $J=(J_1,\dots,J_N)$ as in \eqref{partitionJ}.
% such that
% \begin{align}
% J_1 \cup \cdots \cup J_N=\{1,\dots,n \}, \ \ \ J_j \cap J_k =\varnothing \ (j \neq k).
% \end{align}
Let $\overline{w}=\{ w_1,\dots,w_n  \}$
and $\overline{w}_{J_j}=\{ w_k \ | \ k \in J_j  \}$ ($j=1,\dots,N$).
Note $\overline{w}=\overline{w}_{J_1} \cup \cdots \cup \overline{w}_{J_N}$.

\begin{proposition} 
The following holds:
\begin{align}
&\psi(\overline{w}_{J_1},\overline{w}_{J_1} \cup \overline{w}_{J_2},\dots,\overline{w}_{J_1} \cup \cdots \cup \overline{w}_{J_{N-1}}
|\overline{w}|\boldsymbol{I})
\nonumber \\
=&
\prod_{\ell=1}^{N-2}
\prod_{j=1}^\ell \prod_{k=1}^{\ell+1} (q \overline{w}_{J_j}-q^{-1}\overline{w}_{J_k})
K
(\overline{w}_{J_1},\overline{w}_{J_2},\dots,\overline{w}_{J_{N-1}}
|\overline{w}|\boldsymbol{I}), \label{relatingpartitionfunctions}
\end{align}
where
\begin{align}
K
(\overline{w}_{J_1},\overline{w}_{J_2},\dots,\overline{w}_{J_{N-1}}
|\overline{w}|\boldsymbol{I}) 
=&e_{\boldsymbol{I} }^*
T_{N1}(\overline{w}_{J_1};\overline{w}) 
T_{N2}(\overline{w}_{J_2};\overline{w}) 
\times \cdots \times
T_{NN-1}(\overline{w}_{J_{N-1}};\overline{w}) 
e_{N^{n}},
\end{align}
with $e_{\boldsymbol{I} }^*=e^*_{i_1,i_2,\dots,i_n}$ for $I=(i_1,i_2,\dots,i_n)$.

\end{proposition}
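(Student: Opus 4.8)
The plan is to prove this identity graphically, by recognising the left-hand partition function as the right-hand one with an extra frozen region attached whose total Boltzmann weight is the prefactor $\prod_{\ell=1}^{N-2}\prod_{j=1}^{\ell}\prod_{k=1}^{\ell+1}(q\overline{w}_{J_j}-q^{-1}\overline{w}_{J_k})$; this is the same mechanism as in the proofs of Proposition~\ref{keypropositionforproof} and Lemma~\ref{slightlylarger}, now iterated over the $N-2$ lower layers of the nested lattice. First I would lay out the two lattices. By its graphical definition, $\psi(\overline{w}_{J_1},\overline{w}_{J_1}\cup\overline{w}_{J_2},\dots,\overline{w}_{J_1}\cup\cdots\cup\overline{w}_{J_{N-1}}|\overline{w}|\boldsymbol{I})$ is the $(N-1)$-layer staircase in which the horizontal lines of layer $\ell$ carry the parameters $\overline{w}_{J_1}\cup\cdots\cup\overline{w}_{J_\ell}$ and its vertical lines, which are shared with layer $\ell+1$ as its horizontal lines, carry $\overline{w}_{J_1}\cup\cdots\cup\overline{w}_{J_{\ell+1}}$, while the vertical lines of the top layer carry $\overline{w}$, coloured by $\boldsymbol{I}$ along the top edge and by the vacuum colour $N$ along the bottom edge. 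On the other side, $K(\overline{w}_{J_1},\dots,\overline{w}_{J_{N-1}}|\overline{w}|\boldsymbol{I})=e_{\boldsymbol{I}}^*T_{N1}(\overline{w}_{J_1};\overline{w})\cdots T_{N,N-1}(\overline{w}_{J_{N-1}};\overline{w})e_{N^n}$ is the rectangular lattice with $n$ columns carrying $\overline{w}$ and with $k_{N-1}$ rows grouped into $N-1$ blocks, the $j^{\text{th}}$ block consisting of $|\overline{w}_{J_j}|$ rows that carry $\overline{w}_{J_j}$ and enter and leave the lattice in the colours $j$ and $N$.

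Next I would perform the freezing, working from the innermost layer outward. In layer $\ell$ with $\ell\le N-2$ every horizontal parameter lies in $\overline{w}_{J_1}\cup\cdots\cup\overline{w}_{J_\ell}$ and therefore coincides with one of the vertical parameters of that same layer; since $R(x,x)=(q-q^{-1})x\,P$, the transmission weight $x-y$ vanishes whenever the two parameters at a crossing agree. Combined with the colours already pinned on the boundary edges, the vacuum colour $N$ propagated up from the reference state and the colours $\boldsymbol{I}$ propagated down from the top, this eliminates every admissible configuration but one, by the same propagation-along-rows-and-columns argument used for Lemma~\ref{slightlylarger} and pictured in Figure~\ref{fig:domainwallspecialcase}. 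In the surviving configuration each vertex of layer $\ell$ contributes the weight $q\,x-q^{-1}\,y$, with $x$ its horizontal and $y$ its vertical parameter (the vertex is an $[R(x,y)]_{ii}^{ii}$ crossing of equal-coloured lines, or else, at a coincident pair $x=y$, a colour swap whose weight $(q-q^{-1})x$ is numerically $qx-q^{-1}y$ at $y=x$). Multiplying over layer $\ell$ gives $\prod_{j=1}^{\ell}\prod_{k=1}^{\ell+1}(q\overline{w}_{J_j}-q^{-1}\overline{w}_{J_k})$, and over layers $1,\dots,N-2$ it gives the full prefactor. The same frozen configuration also determines the outgoing colours on the horizontal lines of layer $N-1$: a line carrying a parameter of $\overline{w}_{J_j}$ must leave in colour $j$, which is exactly the block structure of $K$; hence the unfrozen remainder is the top layer equipped with this boundary data, namely $K(\overline{w}_{J_1},\dots,\overline{w}_{J_{N-1}}|\overline{w}|\boldsymbol{I})$, and the identity follows. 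One may equally organise this as an induction on $N$: peel off layer $1$, match its weight with the $\ell=1$ factor of the prefactor via the rank-one freezing of Lemma~\ref{slightlylarger}, and invoke the statement for $U_q(\widehat{\mathfrak{gl}}_{N-1})$ on what is left.

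The step I expect to be the main obstacle is the combinatorial bookkeeping: verifying that the propagation of frozen vertices really is forced through all $N-2$ layers for an arbitrary colouring $\boldsymbol{I}$, where the blocks $J_j$ need not be contiguous, and checking that the accumulated frozen weights assemble exactly into $\prod_{\ell=1}^{N-2}\prod_{j=1}^{\ell}\prod_{k=1}^{\ell+1}(q\overline{w}_{J_j}-q^{-1}\overline{w}_{J_k})$, with the $q$ and $q^{-1}$ in the correct positions and with no stray $(q-q^{-1})x$ or $x-y$ factors left over. A convenient way to keep this under control is to settle first the contiguous case $\boldsymbol{I}=(1^{|\overline{u}^1|},\dots,N^{|\overline{u}^N|})$, where the frozen region is a triangular stack and the argument reduces to that of Borodin--Wheeler \cite[Proposition~7.1.1]{BW}, and then to recover the general case from the invariance of both $\psi$ and $K$ under permutations of the columns within each colour class, which follows from the Yang--Baxter relation. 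Alternatively, invoking Theorem~\ref{offshellbethetrigonometricweight} to replace $\psi$ by the explicit weight function \eqref{trigonometric-wavefunction}, one can substitute that formula and use the coincidences $\overline{u}^\ell\subset\overline{u}^{\ell+1}$ to collapse the sums over $\sigma_1,\dots,\sigma_{N-2}$ to a single term, which then factorises into the prefactor times a direct evaluation of $K$.
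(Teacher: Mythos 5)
Your overall strategy---collapse the $N-2$ lower layers of the nested lattice and read off the prefactor, leaving the top layer as $K$---is the paper's strategy, and your per-layer factor $\prod_{j=1}^{\ell}\prod_{k=1}^{\ell+1}(q\ol{w}_{J_j}-q^{-1}\ol{w}_{J_k})$ is the correct total. But the mechanism you give for the collapse has a genuine gap: for $N\ge 4$ the lower layers do \emph{not} freeze to a unique admissible configuration, so the ``propagation-along-rows-and-columns'' argument of Lemma~\ref{slightlylarger}/Figure~\ref{fig:domainwallspecialcase} does not transfer. The obstruction already appears in layer $2$: its auxiliary lines enter with two different colours (colour $1$ for the $J_1$-group and colour $2$ for the $J_2$-group, the $J_2$-rows necessarily lying below the $J_1$-rows because $T_{31}$ and $T_{32}$ do not commute), while every column enters with the vacuum colour $3$. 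A colour-$2$ row meeting a colour-$3$ column of \emph{different} parameter may either transmit (weight $x-y\neq 0$) or exchange (weight proportional to $q-q^{-1}$); nothing forces one choice, and no reordering of the columns removes the ambiguity. Concretely, with $|J_1|=|J_2|=|J_3|=1$ and parameters $a,b,c$, two distinct configurations contribute to the sorted output of that layer---one with two off-diagonal exchanges, of weight proportional to $(q-q^{-1})^2ab$, and one with two transmissions, of weight proportional to $-(a-b)^2$---and only their \emph{sum} reproduces the per-vertex product, via $(qa-q^{-1}b)(qb-q^{-1}a)=(q-q^{-1})^2ab-(a-b)^2$, i.e.\ via the unitarity relation \eqref{trig-unitarity}. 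So the collapse of a lower layer is an identity for the \emph{vector}, not a configuration-by-configuration freezing, and the ``surviving configuration in which each vertex contributes $qx-q^{-1}y$'' that you describe does not exist once a layer carries auxiliary lines of more than one colour.

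This missing step is exactly what the paper's proof (following Borodin--Wheeler \cite[Prop.~7.1.1]{BW}, which you mention only as a fallback for the contiguous case) supplies: after reordering the columns by Yang--Baxter, the equal-parameter crossings are replaced at the operator level by permutation operators, $R(u,u)=(q-q^{-1})u\,P$, which unravels the layer, and the resulting double crossings are then removed in pairs by unitarity, each pair contributing $(qw_k-q^{-1}w_\ell)(qw_\ell-q^{-1}w_k)$; together with the genuinely frozen right-hand block (all colours $\ell+1$) this assembles into $(q\,\ol{w}_{J_1\cup\cdots\cup J_\ell}-q^{-1}\,\ol{w}_{J_1\cup\cdots\cup J_{\ell+1}})$. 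Your proposed repairs do not close the gap as stated: reducing to the contiguous $\boldsymbol{I}$ and permuting columns does not help, since the failure of freezing is independent of $\boldsymbol{I}$ and of the column order; and the alternative route through Theorem~\ref{offshellbethetrigonometricweight} rests on the unproved claim that the specialized sums over $\sigma_1,\dots,\sigma_{N-2}$ collapse to a single term, which is again the nontrivial content. Note also that even in layer $1$, where the rows are all $T_{N1}$'s and do commute, uniqueness of the frozen configuration holds only after matching the row order to the column order, a point your sketch leaves implicit.
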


Taking $\overline{w}$ to be
$\overline{w}= \overline{v}^1 \cup \overline{v}^2 \cup \dots \cup \overline{v}^{N-1} \cup \overline{v}^N$
and for the case we apply, we can also write as
$\overline{w}
=\overline{u}^1 \cup \overline{u}^2 \cup \dots \cup \overline{u}^N
$, 
and we can take subsets as $\overline{w}_{J_j}=\overline{u}^j$, $j=1,\dots,N-1$.
Setting
$\boldsymbol{I}
=(1^{|\overline{u}^1|},2^{|\overline{u}^2|},\dots,(N-1)^{|\overline{u}^{N-1}|},N^{|\overline{u}^{N}|})$
gives
Proposition \ref{relateoffshellbetherectangular}.

\begin{proof}
$\psi(\overline{w}_{J_1},\overline{w}_{J_1} \cup \overline{w}_{J_2},\dots,\overline{w}_{J_1} \cup \cdots \cup \overline{w}_{J_{N-1}}
|\overline{w}|\boldsymbol{I})
$ is the $\mathfrak{gl}_N$ partition function with parameters specialized so that the `quantum' and `auxiliary' spaces have the same sets of parameters, see Figure~\ref{f:glN-wavefunction}. 
	% Consider the partition function given diagrammatically by the following diagram.
	% \begin{center}
	% 	\includegraphics{Diagrams/graphic_arg1.pdf}
	% \end{center}
	% Crucially, the parameter set associated with the horizontal lines is a subset of those associated with the vertical lines.
	% We are free to choose the ordering of spaces with the same input and output by inserting $R$-matrices and applying the Yang-Baxter equation.
	% Now, with this ordering, the intersection points along the ``diagonal'' have identical spectral parameters. 
	% At these points, the $R$ matrix becomes proportional to the permutation operator, with constant of proportionality equal to $c$. 
	% We represent this graphically as 
	% \begin{center}
	% 	\includegraphics{Diagrams/graphic_arg2.pdf}
	% \end{center}

	% Next, we observe that the unitarity relation can be used to unravel the various spaces:
	% \begin{center}
	% 	\includegraphics{Diagrams/graphic_arg3.pdf}
	% \end{center}

	% Continuing inductively, we arrive at
	% \begin{center}
	% 	\includegraphics{Diagrams/graphic_arg4.pdf}
	% \end{center}

	% Now, since all indices are identical, the partition function is completely determined in the usual way.
	% \begin{center}
	% 	\includegraphics{Diagrams/graphic_arg5.pdf}
	% \end{center}

	% This gives the following final expression:
	% \begin{align*}
	% 	\kappa(w_1,\dots,w_8) &= c^5 \prod_{1 \leq j < k \leq 5} (c^2-(w_k-w_j)^2) \prod_{k=1}^5 \prod_{j=6}^8 (w_k-w_j+c)
	% 	\\
	% 	&= \prod_{j,k=1}^5 (w_k-w_j+c) \prod_{k=1}^5 \prod_{j=6}^8 (w_k-w_j+c)
	% 	\\
	% 	&= \prod_{k=1}^5 \prod_{j=1}^8 (w_k-w_j+c)
	% \end{align*}
The trick we use is essentially the same with the one given in  \cite[Proposition 7.1.1]{BW}.
	Observe that the ordering of the spaces in each layer is arbitrary -- the input and output coordinate is the same in each layer, so reordering can be accomplished by insertion of an $R$-matrix and application of the Yang-Baxter equation. 
	We thus choose the lexicographic ordering according to the parameter label from left to right and top to bottom in each case. 
	
%We prepare the following auxiliary vector which helps the explanation.	
%\begin{align}
%\phi^j&:=T_{j+1,1}(\overline{w}_{J_1}|\overline{w}_{J_1} \cup \overline{w}_{J_2} \cup \cdots \cup \overline{w}_{J_{j+1}})
%T_{j+2,1}(\overline{w}_{J_2}|\overline{w}_{J_1} \cup \overline{w}_{J_2} \cup \cdots \cup \overline{w}_{J_{j+1}})
%\nonumber \\
%&\times T_{j+1,j}(\overline{w}_{J_j}|\overline{w}_{J_1} \cup \overline{w}_{J_2} \cup \cdots \cup \overline{w}_{J_{j+1}})
%e_{j+1}^{|J_1|+|J_2|+\cdots+|J_{j+1}|},
%\end{align}
%where $|J_j|=|\overline{w}_{J_j}|$.

	%\begin{lemma}
	%\[
	%		\phi^j =  (\ol{w}_{J_1 \cup \cdots \cup {J_{j}}}-q\,\ol{w}_{J_1 \cup \cdots \cup {J_{j+1}}})
	%		(e_{1})^{\otimes |J_1|} \otimes (e_{2})^{\otimes |J_2|} \otimes  \cdots \otimes (e_{j+1})^{\otimes |J_{j+1}|}
	%	\]
%		\[
%			\Psi^{\mathrm{poly}}(\ol{w}|\ol{w}) = \prod_{j=1}^{N-1} 
% (\ol{w}_{J_1 \cup \cdots \cup {J_{j}}}-q\,\ol{w}_{J_1 \cup \cdots \cup {J_{j+1}}})
%			(e_{1})^{\otimes |J_1|} \otimes \cdots (e_{N-1})^{\otimes |J_{N-1}|}
%		\]
	%\end{lemma}

		The argument then follows by collapsing each layer of the specialized partition function, starting with the lowest layer.
		Indeed, for $1 \leq j \leq N-2$, we consider if we assume that the first 
		$(j-1)$ layers have been collapsed, the $j^\text{th}$ layer will be as in Figure~\ref{f:layerj}.
		This part corresponds to the following vector
\begin{align}
\phi^j&:=T_{j+1,1}(\overline{w}_{J_1}|\overline{w}_{J_1} \cup \overline{w}_{J_2} \cup \cdots \cup \overline{w}_{J_{j+1}})
T_{j+2,1}(\overline{w}_{J_2}|\overline{w}_{J_1} \cup \overline{w}_{J_2} \cup \cdots \cup \overline{w}_{J_{j+1}})
\nonumber \\
&\times \cdots \times  T_{j+1,j}(\overline{w}_{J_j}|\overline{w}_{J_1} \cup \overline{w}_{J_2} \cup \cdots \cup \overline{w}_{J_{j+1}})
e_{j+1}^{|J_1|+|J_2|+\cdots+|J_{j+1}|},
\end{align}
where $|J_j|=|\overline{w}_{J_j}|$.
		
		\begin{figure} 
			\centering
			\includegraphics[width=0.9\textwidth]{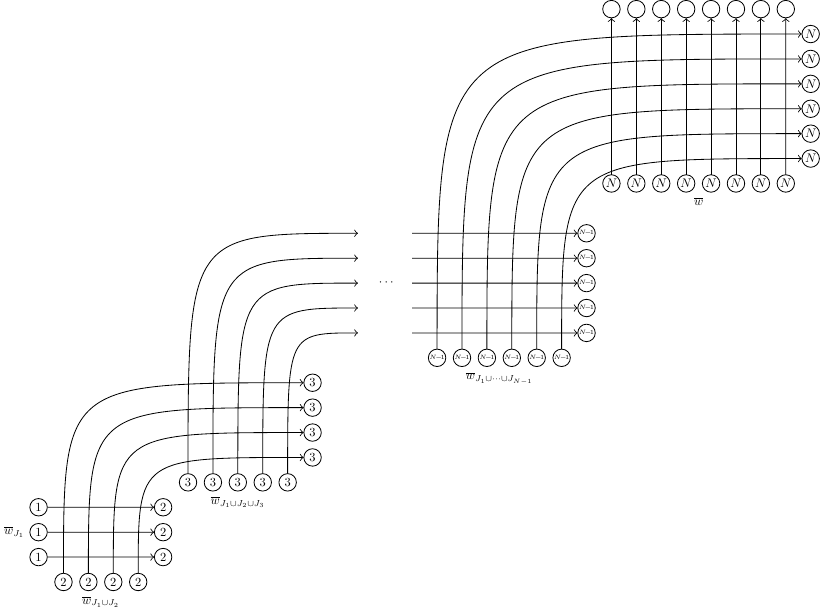}
			\caption{Partition function for the $\mathfrak{gl}_N$ system.}
			\label{f:glN-wavefunction}
		\end{figure}

		\begin{figure}
			\centering
			\includegraphics{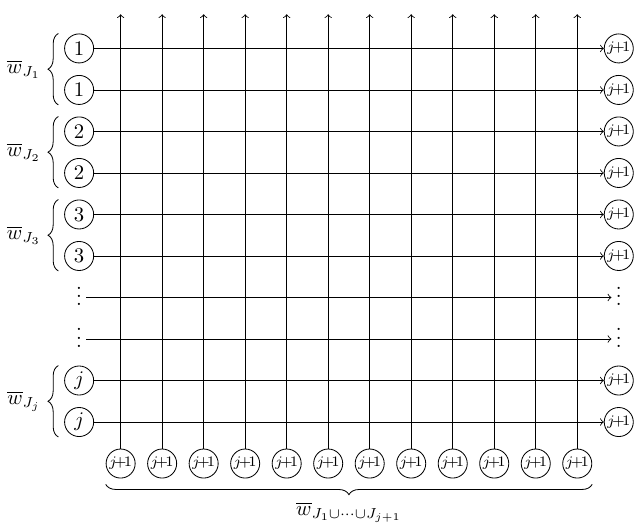}
			\caption{The $j^\text{th}$ layer of the Bethe vector.}
			\label{f:layerj}
		\end{figure}

		Now, with the ordering as above, the intersections on the ``diagonal'' will have the same parameters. 
		Specifically, 
		\[
			R(u,u) = (q-q^{-1})u \left(\sum_{i,j=1}^N  E_{ij} \otimes E_{ji} \right).
		\]

		As such, each can be replaced by a permutation operator multiplied by a constant $(q-q^{-1})u$, giving an overall factor of
		\begin{equation} \label{graphical-arg-factor1}
			\prod_{i \in J_1 \cup \dots \cup J_{j}} \{ (q-q^{-1})w_i \}.
		\end{equation}
		The resulting diagram is given by Figure~\ref{f:layerj-spec}.

		\begin{figure}
			\centering
			\includegraphics{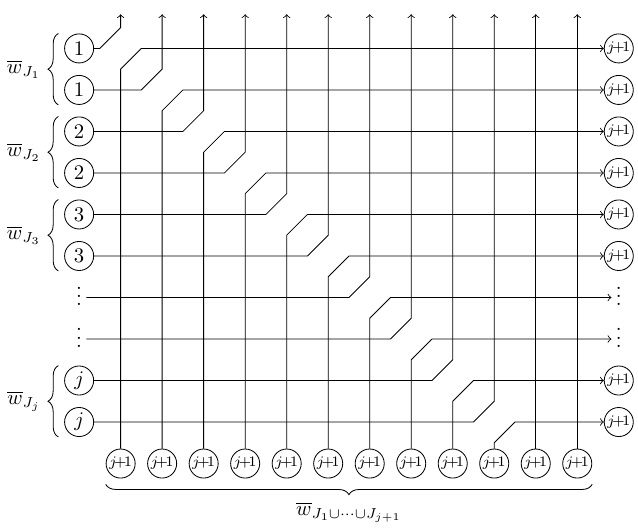}
			\caption{The $j^\text{th}$ layer of the Bethe vector after specialization.}
			\label{f:layerj-spec}
		\end{figure}

		Next, the unitarity relation is applied repeatedly.
		Recall that 
		\begin{equation} \label{trig-unitarity}
			R_{12}(u,v)R_{21}(v,u)
			=
			(qu-q^{-1}v)(qv-q^{-1}u) \mathrm{I} \otimes \mathrm{I}.			
		\end{equation}
		As a result, the Bethe vector is unravelled, resulting in Figure~\ref{f:layerj-unravel}.
		In doing so, each `auxiliary space' interacts with every `quantum space' that is before it in the lexicographic ordering. 
		In other words, the following factor is introduced:
		% In doing so, the factor \eqref{trig-unitarity} appears between each pair of spaces, where the parameter of the `quantum space' is before that of the `auxiliary space'.
		\[
			\prod_{\substack{k,\ell \in J_1 \cup \cdots \cup J_{j} \\ k<\ell}}(qw_k-q^{-1}w_\ell)(qw_\ell-q^{-1}w_k)
			= \prod_{\substack{k,\ell \in J_1 \cup \cdots \cup J_{j} \\  k\neq \ell}}(qw_k-q^{-1}w_\ell).
		\]
		Combining this with the previous factor \eqref{graphical-arg-factor1}  results in 
		\begin{equation}\label{graphical-arg-factor2}
			\prod_{k,\ell \in J_1 \cup \cdots \cup J_{j} }(qw_k-q^{-1}w_\ell) = q\ol{w}_{J_1 \cup \cdots \cup {J_{j}}}-q^{-1}\, \ol{w}_{J_1 \cup \cdots \cup {J_{j}}}.
		\end{equation}
		\begin{figure}
			\centering
			\includegraphics{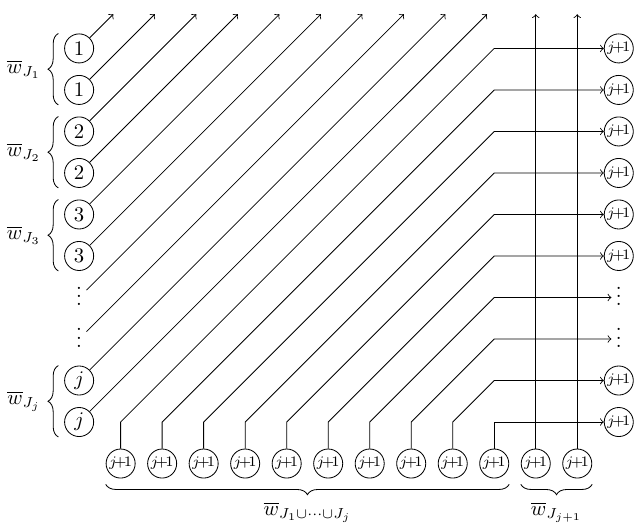}
			\caption{The unravelled Bethe vector.}
			\label{f:layerj-unravel}
		\end{figure}

		Finally, as the remaining coordinates on the right-hand side are all equal to $j+1$, this part of the partition function is also fully determined by the conservation of colors \eqref{icerule}. 
		Each $R$-matrix element that appears is equal to $[R(u,v)]_{j+1,j+1}^{j+1,j+1}$ for some $u,v$, resulting in factor $(q \ol{w}_{J_1 \cup \cdots \cup {J_{j}}}-q^{-1}\,\ol{w}_{J_{j+1}})$.
		Combining this with \eqref{graphical-arg-factor2}, we have an overall factor of 
		\begin{equation*}
			(q\ol{w}_{J_1 \cup \cdots \cup {J_{j}}}-q^{-1}\,\ol{w}_{J_1 \cup \cdots \cup {J_{j+1}}}).
		\end{equation*}
		Graphically, this entire step is described in Figure~\ref{f:graphic-arg-gl5}.
What is shown in the $j^\text{th}$ layer is that $\phi^j$ becomes
\[
			\phi^j =  (q\ol{w}_{J_1 \cup \cdots \cup {J_{j}}}-q^{-1}\,\ol{w}_{J_1 \cup \cdots \cup {J_{j+1}}})
			(e_{1})^{\otimes |J_1|} \otimes (e_{2})^{\otimes |J_2|} \otimes  \cdots \otimes (e_{j+1})^{\otimes |J_{j+1}|}.
		\]

		\begin{figure}[ht]
			\includegraphics[width=\textwidth]{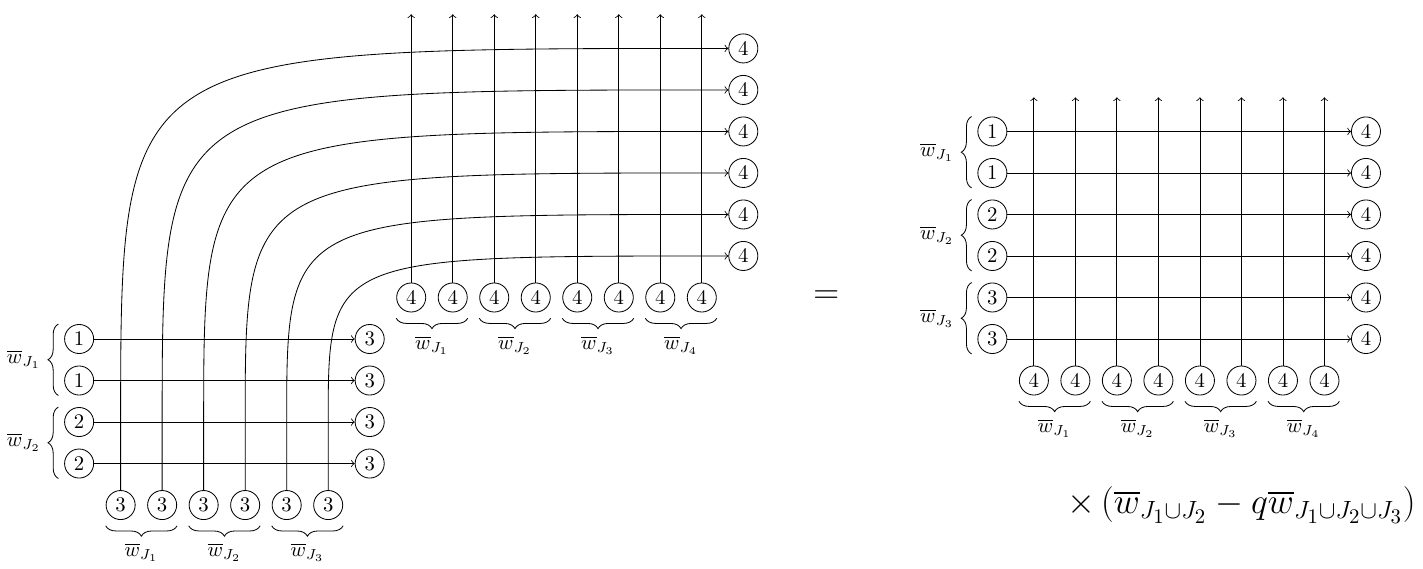}
			\caption{Inductive step of the graphical argument for the specialization of the partition function.}
			\label{f:graphic-arg-gl5}
		\end{figure}

Iterating this process, we have the following factor		
\[
\prod_{j=1}^{N-2} (q\ol{w}_{J_1 \cup \cdots \cup {J_{j}}}-q^{-1}\,\ol{w}_{J_1 \cup \cdots \cup {J_{j+1}}})
=
\prod_{\ell=1}^{N-2}
\prod_{j=1}^\ell \prod_{k=1}^{\ell+1} (q \overline{w}_{J_j}-q^{-1}\overline{w}_{J_k}),
\]
coming from the layers except the last one,
and $(e_{1})^{\otimes |J_1|} \otimes \cdots (e_{N-1})^{\otimes |J_{N-1}|}$
which is the basis vector for $\phi^{N-2}$ corresponds to the coloring
of the left side of the last layer,
and we have the right panel of Figure~\ref{f:graphic-arg-gl5}.
Namely, the last layer becomes the partition function
$
K
(\overline{w}_{J_1},\overline{w}_{J_2},\dots,\overline{w}_{J_{N-1}}
|\overline{w}|\boldsymbol{I})
$.
Hence we get \eqref{relatingpartitionfunctions}.
		
		\end{proof}
		
{\it Proof of Theorem \ref{thmmultiplecommutationrelations}. } \\
Combining Proposition \ref{keypropositionforproof}
and Proposition \ref{keyrelationforproof} gives Theorem \ref{thmmultiplecommutationrelations}.
\hfill$\square$ \\

As a final remark of this section, let us present another form
for the case $N=2$, recovering another presentation
using the Izergin-Korepin determinants \cite[(7.62), (7.63)]{Slavnovbook}.
The case $N=2$ of Proposition \ref{keypropositionforproof} is

\begin{align}
&T_{21}(\overline{u}^1) T_{22}(\overline{u}^2) 
=\sum_{\{ \overline{u}^1, \overline{u}^2 \} \mapsto \{ \overline{v}^1,
\overline{v}^2 \} } 
\frac{
q \overline{u}^2-q^{-1} \overline{v}^1
}{ (\overline{v}^2-\overline{v}^{1})(q \overline{v}^1-q^{-1} \overline{v}^{1})}
H(\overline{u}^1|\overline{v}^1)
T_{22}(\overline{v}^2)  T_{21}(\overline{v}^1), 
\end{align}
where
\begin{align}
&H(\overline{u}^1|\overline{v}^1) 
=
e_{1^{|\overline{u}^1|}}^*
T_{21}(\overline{u}^1;\overline{v}^1) 
e_{2^{|\overline{u}^1|}}.
\end{align}

By
\eqref{domainwallpartitionIKdet}, $H(\overline{u}^1|\overline{v}^1)$
can be expressed as the Izergin-Korepin determinant $K(\overline{u}^1|\overline{v}^1)$,
and we get
\begin{align}
&T_{21}(\overline{u}^1) T_{22}(\overline{u}^2) 
=\sum_{\{ \overline{u}^1, \overline{u}^2 \} \mapsto \{ \overline{v}^1,
\overline{v}^2 \} } 
\frac{
q \overline{u}^2-q^{-1} \overline{v}^1
}{ (\overline{v}^2-\overline{v}^{1})(q \overline{v}^1-q^{-1} \overline{v}^{1})}
K(\overline{u}^1|\overline{v}^1)
T_{22}(\overline{v}^2)  T_{21}(\overline{v}^1); 
\end{align}
setting
$i=2$, $j=2$, $k=1$, $\overline{u}^1=\{ u \}$,
$\overline{u}^2=\{ v \}$ recovers the basic commutation relation.

\section{A construction the Gelfand-Tsetlin basis for the vector representation}
The relation \eqref{relatingpartitionfunctions} implies that
$T_{N1}(\overline{w}_{J_1};\overline{w}) 
T_{N2}(\overline{w}_{J_2};\overline{w}) 
\ \cdots 
T_{NN-1}(\overline{w}_{J_{N-1}};\overline{w}) 
e_{N^{n}}
$ for all $J$ give rise to a construction of the Gelfand-Tsetlin basis
for the case of the tensor product of vector representations,
a basis which simultaneously diagonalizes the Gelfand-Tsetlin subalgebra.
There are two known main constructions of the Gelfand-Tsetlin basis,
one due to Nazarov-Tarasov \cite{NT} and another due to Molev \cite{Molev}.
We briefly review Molev's construction in the Appendix, which shows that
$T_{21}(w_{J_1};\overline{w}) T_{32}(w_{J_1} \cup w_{J_2};\overline{w}) \cdots T_{N,N-1}(w_{J_1} \cup \cdots \cup w_{J_{N-1}};\overline{w}) e_{N^n}
$ for all $J$ give rise to a construction of the Gelfand-Tsetlin basis.
However, it seems not easy to show directly that
$T_{N1}(\overline{w}_{J_1};\overline{w}) 
T_{N2}(\overline{w}_{J_2};\overline{w}) 
\cdots 
T_{NN-1}(\overline{w}_{J_{N-1}};\overline{w}) 
e_{N^{n}}
$ for $N>2$ diagonalizes the Gelfand-Tsetlin subalgebra.
We take an indirect approach and derive the following relations
between \\
$T_{N1}(\overline{w}_{J_1};\overline{w}) 
T_{N2}(\overline{w}_{J_2};\overline{w}) $
$\cdots 
T_{NN-1}(\overline{w}_{J_{N-1}};\overline{w}) 
e_{N^{n}}
$  and $T_{21}(w_{J_1};\overline{w}) T_{32}(w_{J_1} \cup w_{J_2};\overline{w}) \cdots T_{N,N-1}(w_{J_1} \cup \cdots \cup w_{J_{N-1}};\overline{w}) e_{N^n}
$.
\begin{proposition} 
The following relation holds:
\begin{align}
	&T_{21}(\ol{w}_{J_1};\ol{w})
	T_{32}(\ol{w}_{J_{1}}\cup\ol{w}_{J_2};\ol{w})
	\cdots
	T_{N,N-1}(\ol{w}_{J_{1}}\cup \cdots \cup\ol{w}_{J_{N-1}};\ol{w})e_{N^n}
	\nonumber \\
	=&
	\prod_{j=2}^{N-1}
	\left(\ol{w}_{J_{1}}\cup \cdots \cup \ol{w}_{J_{j-1}} -\ol{w}_{J_{j+1}}\cup \cdots \cup \ol{w}_{J_{N}}\right) 
	\nonumber \\
	&\times
	\prod_{1 \le j < k \le N-1 } \left(q \ol{w}_{J_{j}}-q^{-1}\ol{w}_{J_{k}}\right)^{N-k}
		\left(q \ol{w}_{J_{k}}-q^{-1}\ol{w}_{J_{j}}\right)^{N-k-1}
		\nonumber \\
		&\times
	\prod_{j=1}^{N-2} \left(q \ol{w}_{J_{j}}-q^{-1} \ol{w}_{J_{j}}\right)^{N-j-1}
	T_{N1}(\ol{w}_{J_1};\ol{w}) T_{N2}(\ol{w}_{J_2};\ol{w})
	\cdots 
	T_{N,N-1}(\ol{w}_{J_{N-1}};\ol{w})
	e_{N^n}. \label{relationGZ}
\end{align}

\end{proposition}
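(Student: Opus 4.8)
The plan is to realize both sides of \eqref{relationGZ} as two different specializations of the universal off-shell nested Bethe vector $\mathbb{B}(\bar t^1,\dots,\bar t^{N-1})$ of $U_q(\widehat{\mathfrak{gl}}_N)$, and to exploit the two expressions for this vector established by Pakuliak--Ragoucy--Slavnov \cite{PRS}. Concretely, set $\bar t^j:=\ol{w}_{J_1}\cup\cdots\cup\ol{w}_{J_j}$ for $j=1,\dots,N-1$, take the $L$-operator to be the vector-representation one with inhomogeneities $\ol{w}$, and apply $\mathbb{B}(\bar t^1,\dots,\bar t^{N-1})$ to the reference vector $e_{N^n}$. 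The key observation is that each $\bar t^j$ is a union of subsets of $\ol{w}$, so the specialization is \emph{resonant}: whenever a spectral parameter coincides with an inhomogeneity the corresponding $R$-matrix collapses to a permutation operator times $(q-q^{-1})w_a$ (as in $R(u,u)=(q-q^{-1})u\sum_{i,j}E_{ij}\otimes E_{ji}$), which is exactly the freezing phenomenon already used in the proofs of Proposition~\ref{keypropositionforproof} and Proposition~\ref{relateoffshellbetherectangular}; see \eqref{domainwallspecialcase}, \eqref{graphical-arg-factor1}, \eqref{graphical-arg-factor2}. Hence each of the two \cite{PRS} formulas reduces, at this specialization, to a single product of $L$-operator elements times an explicit scalar.

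First I would specialize the hierarchical (nested, ``simple-root'') expression of \cite{PRS}. Under the resonance its nested summations collapse to a single surviving term, which is proportional to $T_{21}(\ol{w}_{J_1};\ol{w})\,T_{32}(\ol{w}_{J_1}\cup\ol{w}_{J_2};\ol{w})\cdots T_{N,N-1}(\ol{w}_{J_1}\cup\cdots\cup\ol{w}_{J_{N-1}};\ol{w})\,e_{N^n}$, i.e.\ to the Molev-type vector reviewed in the Appendix; write $c^{\mathrm h}_J$ for the proportionality scalar, obtained by collecting the scalars coming from the frozen crossings together with the values at the resonant point of the rational coefficients appearing in the nested expression. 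Next I would specialize the second (``top-row'') \cite{PRS} expression; the same freezing identifies $\mathbb{B}(\bar t^1,\dots,\bar t^{N-1})e_{N^n}$ with $c^{\mathrm r}_J\,T_{N1}(\ol{w}_{J_1};\ol{w})\cdots T_{N,N-1}(\ol{w}_{J_{N-1}};\ol{w})\,e_{N^n}$, and here the scalar is already available: the $e_{\bm I}^{*}$-component of this vector is the partition function on the left-hand side of \eqref{relatingpartitionfunctions}, whence $c^{\mathrm r}_J=\prod_{\ell=1}^{N-2}\prod_{j=1}^{\ell}\prod_{k=1}^{\ell+1}(q\ol{w}_{J_j}-q^{-1}\ol{w}_{J_k})$. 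Since the two \cite{PRS} expressions coincide (up to a $J$-independent constant, which may be pinned down once and for all by the trivial case $N=2$), equating the two specializations gives $c^{\mathrm h}_J\,\mathcal{M}_J=c^{\mathrm r}_J\,\mathcal{K}_J$, where $\mathcal{M}_J,\mathcal{K}_J$ denote the two vectors appearing in \eqref{relationGZ}; therefore $\mathcal{M}_J=(c^{\mathrm r}_J/c^{\mathrm h}_J)\,\mathcal{K}_J$.

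It then remains to check that $c^{\mathrm r}_J/c^{\mathrm h}_J$ equals the prefactor in \eqref{relationGZ},
\[
\prod_{j=2}^{N-1}\!\big(\ol{w}_{J_1}\cup\cdots\cup\ol{w}_{J_{j-1}}-\ol{w}_{J_{j+1}}\cup\cdots\cup\ol{w}_{J_N}\big)\prod_{1\le j<k\le N-1}\!\big(q\ol{w}_{J_j}-q^{-1}\ol{w}_{J_k}\big)^{N-k}\big(q\ol{w}_{J_k}-q^{-1}\ol{w}_{J_j}\big)^{N-k-1}\prod_{j=1}^{N-2}\!\big(q\ol{w}_{J_j}-q^{-1}\ol{w}_{J_j}\big)^{N-j-1}.
\]
I expect this to be the main obstacle. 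It is lengthy rational-function bookkeeping, and it requires (i) rewriting the two \cite{PRS} expressions in the conventions used here --- in particular passing to the reference vector $e_{N^n}$ and the below-diagonal generators $T_{ij}$ with $i>j$, which differs from the usual highest-weight / upper-triangular set-up by an algebra automorphism, so one has to track how the $R$-matrix coefficients transform --- and (ii) carrying out the freezing layer by layer so as to extract $c^{\mathrm h}_J$: the ``difference'' factors $\big(\ol{w}_{J_1}\cup\cdots\cup\ol{w}_{J_{j-1}}-\ol{w}_{J_{j+1}}\cup\cdots\cup\ol{w}_{J_N}\big)$ arise from frozen crossings of strands carrying distinct, non-resonant colors, whereas the $q$-power factors come from the diagonal crossings with coinciding parameters, of the type already seen in \eqref{graphical-arg-factor2} and \eqref{domainwallspecialcase}. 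Throughout, the cases $N=2$ (prefactor $1$, both sides equal) and $N=3$ (prefactor $(\ol{w}_{J_1}-\ol{w}_{J_3})(q\ol{w}_{J_1}-q^{-1}\ol{w}_{J_2})(q\ol{w}_{J_1}-q^{-1}\ol{w}_{J_1})$) serve as consistency checks both for the bookkeeping and for the normalization constant.
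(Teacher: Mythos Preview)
Your approach is essentially the paper's: specialize the two Pakuliak--Ragoucy--Slavnov expressions for the universal Bethe vector at $\bar t^j=\ol w_{J_1}\cup\cdots\cup\ol w_{J_j}$ (after passing to the $\tilde R$-convention and the reference vector $e_{1^n}$, as in Proposition~\ref{showrelationprop}), observe that the resonance kills all but one partition in each sum, and equate the two surviving single-term expressions. One technical point you gloss over but which the paper makes explicit: before specializing one must multiply by the common clearing factor $(\ol t^1-\ol w)\prod_{\ell=2}^{N-1}(\ol t^\ell-\ol t^{\ell-1})$ so that the rational coefficients in the PRS formulas become polynomial and the limit is well defined; also, the two PRS expressions are \emph{exactly} equal (Theorem~\ref{bethevectorequivalence}), so no $J$-independent constant needs to be fixed.
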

The relation
\eqref{relationGZ}, together with Proposition \ref{Molevdiagonalization} in the Appendix, implies the following.
\begin{proposition}
$\xi_J:=T_{N1}(\overline{w}_{J_1};\overline{w}) 
T_{N2}(\overline{w}_{J_2};\overline{w}) 
\cdots 
T_{NN-1}(\overline{w}_{J_{N-1}};\overline{w}) 
e_{N^{n}}$
diagonalizes the quantum determinants \eqref{quantumdeterminant}:
\begin{align}
\mathrm{qdet} T^{(j)}(u) \cdot \xi_J
=\prod_{k=1}^j \lambda_{jk}^J (q^{2k-2}u) \xi_J.
\end{align}
\end{proposition}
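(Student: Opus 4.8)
The plan is to leverage the relation \eqref{relationGZ} together with Molev's construction. The key observation is that the proportionality constant on the right-hand side of \eqref{relationGZ} — call it $c_J$ — is a nonzero scalar (a product of linear factors in the $w_i$, which for generic $w_i$ are nonzero). Hence $\xi_J = c_J^{-1}\,\eta_J$, where
\[
\eta_J := T_{21}(\ol{w}_{J_1};\ol{w})\,T_{32}(\ol{w}_{J_1}\cup\ol{w}_{J_2};\ol{w})\cdots T_{N,N-1}(\ol{w}_{J_1}\cup\cdots\cup\ol{w}_{J_{N-1}};\ol{w})\,e_{N^n}
\]
is the Gelfand–Tsetlin vector produced by Molev's construction applied to the vector representation. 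Since $\xi_J$ and $\eta_J$ differ only by a scalar, any operator that acts diagonally on $\eta_J$ acts diagonally on $\xi_J$ with the \emph{same} eigenvalue. Therefore it suffices to invoke Proposition \ref{Molevdiagonalization} from the Appendix, which states precisely that $\eta_J$ diagonalizes the quantum determinants $\mathrm{qdet}\,T^{(j)}(u)$ with eigenvalue $\prod_{k=1}^j \lambda_{jk}^J(q^{2k-2}u)$.

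Concretely, I would first note that the quantum determinants $\mathrm{qdet}\,T^{(j)}(u)$ for $j=1,\dots,N$ generate the Gelfand–Tsetlin subalgebra and mutually commute, and recall from the Appendix (Proposition \ref{Molevdiagonalization}) the statement
\[
\mathrm{qdet}\,T^{(j)}(u)\cdot \eta_J = \prod_{k=1}^j \lambda_{jk}^J(q^{2k-2}u)\,\eta_J,
\]
where the $\lambda_{jk}^J$ are the Gelfand–Tsetlin labels read off from the partition $J$ in the manner described there. Then I would apply $\mathrm{qdet}\,T^{(j)}(u)$ to $\xi_J = c_J^{-1}\eta_J$ and use linearity: $\mathrm{qdet}\,T^{(j)}(u)\cdot\xi_J = c_J^{-1}\,\mathrm{qdet}\,T^{(j)}(u)\cdot\eta_J = c_J^{-1}\prod_{k=1}^j\lambda_{jk}^J(q^{2k-2}u)\,\eta_J = \prod_{k=1}^j\lambda_{jk}^J(q^{2k-2}u)\,\xi_J$, which is the claim.

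The only genuine content beyond this bookkeeping is verifying that the scalar $c_J$ appearing in \eqref{relationGZ} is nonzero, i.e. that none of its factors vanish. Under the standing assumption that $q$ is generic and the spectral parameters $w_i$ are in generic position, each factor of the form $(q\,\ol{w}_{J_j} - q^{-1}\ol{w}_{J_k})$, $(q\,\ol{w}_{J_j} - q^{-1}\ol{w}_{J_j})$, or $(\ol{w}_{J_1}\cup\cdots\cup\ol{w}_{J_{j-1}} - \ol{w}_{J_{j+1}}\cup\cdots\cup\ol{w}_{J_N})$ is a nonzero product of the corresponding linear expressions; in particular $q w_i - q^{-1} w_i = (q-q^{-1})w_i \neq 0$ for $w_i \neq 0$ and $q$ not a root of unity. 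I expect this nonvanishing check — and making precise in what sense the identity of \emph{vectors} passes to an identity of \emph{eigenvectors} — to be the main (though still routine) point; the rest is an immediate consequence of \eqref{relationGZ} and Proposition \ref{Molevdiagonalization}.
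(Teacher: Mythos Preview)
Your proposal is correct and follows essentially the same approach as the paper: the paper simply states that the proposition follows from the relation \eqref{relationGZ} together with Proposition~\ref{Molevdiagonalization}, which is precisely your argument of writing $\xi_J = c_J^{-1}\widehat{\xi}_J$ and transferring the eigenvector property. Your additional care in checking that the scalar $c_J$ is nonzero for generic $q$ and $w_i$ is a detail the paper leaves implicit.
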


We use recent results
for the universal nested Bethe vectors by Pakuliak-Ragoucy-Slavnov \cite{PRS}
which uses the following version of the $R$-matrix
\begin{align}
	\tilde{R}(u,v)=&(qu-q^{-1}v) \sum_{1 \leq i \leq N} E_{ii}\otimes E_{ii}
	+ \sum_{1 \leq i < j \leq N} (u-v)(E_{ii}\otimes E_{jj} + E_{jj} \otimes E_{ii}) 
\nonumber	\\
&+ \sum_{1 \leq i < j \leq N} ((q-q^{-1})u E_{ij} \otimes E_{ji} + (q-q^{-1})v E_{ji} \otimes E_{ij}).
\label{trigRvertwo}
\end{align}
Note the $R$-matrix elements
for \eqref{trigRverone} and \eqref{trigRvertwo}
 are related by
$[\tilde{R}(u,v)]_{ij}^{k \ell}=[R(u,v)]_{N+1-i,N+1-j}^{N+1-k,N+1-\ell}$.
We denote the $L$-operator and its elements constructed using this $R$-matrix by $\tilde{T}(u)$,
$\tilde{T}_{ij}(u)$.
In this version,  relations are given by the following.
\begin{proposition} \label{showrelationprop}
The following relation holds:
\begin{align}
	&\tilde{T}_{N-1,N}(\ol{w}_{I_N}|\ol{w})
	\tilde{T}_{N-2,N-1}(\ol{w}_{I_{N-1}}\cup\ol{w}_{I_N}|\ol{w})
	\cdots
	\tilde{T}_{1,2}(\ol{w}_{I_{2}}\cup \cdots \cup\ol{w}_{I_N}|\ol{w})e_{1^n}
	\nonumber
	\\
	=&
	\prod_{j=2}^{N-1}
	\left(\ol{w}_{I_{j+1}}\cup \cdots \cup \ol{w}_{I_{N}} -\ol{w}_{I_{1}}\cup \cdots \cup \ol{w}_{I_{j-1}}\right)
	\nonumber
	\\
&	\times 
	\prod_{1 \le j<k \le N-1} \left(q \ol{w}_{I_{k+1}}-q^{-1}\ol{w}_{I_{j+1}}\right)^j
	\left(q \ol{w}_{I_{j+1}}-q^{-1}\ol{w}_{I_{k+1}}\right)^{j-1}
	\nonumber
	\\
&	\times 
	\prod_{j=2}^{N-1} \left(q \ol{w}_{I_{j+1}}-q^{-1} \ol{w}_{I_{j+1}}\right)^{j-1}
	\tilde{T}_{1N}(\ol{w}_{I_N}|\ol{w})
	\cdots 
	\tilde{T}_{12}(\ol{w}_{I_2}|\ol{w})
	e_{1^n}. \label{showrelationGZ}
\end{align}

\end{proposition}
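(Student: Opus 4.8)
The plan is to deduce Proposition~\ref{showrelationprop} from the relation \eqref{relationGZ} of the preceding proposition by applying the algebra automorphism induced by the $R$-matrix relabelling $[\tilde{R}(u,v)]_{ij}^{k\ell}=[R(u,v)]_{N+1-i,N+1-j}^{N+1-k,N+1-\ell}$. Concretely, the map $E_{ij}\mapsto E_{N+1-i,N+1-j}$ on $\mathrm{End}(V)$ conjugates the $R$-matrix \eqref{trigRverone} into \eqref{trigRvertwo}, so it sends the $L$-operator elements $T_{jk}(u)$ to $\tilde T_{N+1-j,N+1-k}(u)$, sends the reference vector $e_{N^n}$ to $e_{1^n}$, and sends the dual basis vectors accordingly. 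In particular the string $T_{N1}(\ol w_{J_1};\ol w)\cdots T_{N,N-1}(\ol w_{J_{N-1}};\ol w)e_{N^n}$ on the right of \eqref{relationGZ} maps to $\tilde T_{1N}(\ol w_{I_N}|\ol w)\cdots \tilde T_{12}(\ol w_{I_2}|\ol w)e_{1^n}$ once we reindex the partition by $J_a \leftrightarrow I_{N+1-a}$, which explains the reversal of subscripts $J_k\mapsto I_{k+1}$ appearing in the statement. Similarly the left-hand "climbing" string $T_{21}T_{32}\cdots T_{N,N-1}$ maps to $\tilde T_{N-1,N}\tilde T_{N-2,N-1}\cdots\tilde T_{1,2}$, with the nested argument sets $\ol w_{J_1}\cup\cdots\cup\ol w_{J_j}$ going over to $\ol w_{I_{j+1}}\cup\cdots\cup\ol w_{I_N}$.

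The concrete steps I would carry out are: (i) verify that $\varphi: E_{ij}\mapsto E_{N+1-i,N+1-j}$ satisfies $(\varphi\otimes\varphi)R(u,v)=\tilde R(u,v)$ and hence, because the defining relations \eqref{eq:RTT++} are preserved, extends to an isomorphism from the algebra generated by $T_{ij}(u)$ to that generated by $\tilde T_{ij}(u)$ with $\varphi(T_{ij}(u))=\tilde T_{N+1-i,N+1-j}(u)$; (ii) check compatibility with the vector representation, namely $T^{\mathrm{vect}}(u;\ol\xi)$ goes to $\tilde T^{\mathrm{vect}}(u;\ol\xi)$ under $\varphi$ applied in the auxiliary space together with the simultaneous relabelling $e_i\mapsto e_{N+1-i}$ in each quantum space (this is immediate since $T^{\mathrm{vect}}$ is a product of $R$-matrices); (iii) apply $\varphi$ (in auxiliary space) and the relabelling (in all quantum spaces) to both sides of \eqref{relationGZ}, keeping careful track that $(q\ol w_{J_j}-q^{-1}\ol w_{J_k})^{N-k}(q\ol w_{J_k}-q^{-1}\ol w_{J_j})^{N-k-1}$ and $(q\ol w_{J_j}-q^{-1}\ol w_{J_j})^{N-j-1}$ are scalars and thus untouched by $\varphi$, but their indices get relabelled by $j\mapsto j+1$, $k\mapsto k+1$; (iv) re-sum the product ranges: under $j<k\le N-1$ with $J_a\leftrightarrow I_{N+1-a}$ one gets exactly $\prod_{1\le j<k\le N-1}(q\ol w_{I_{k+1}}-q^{-1}\ol w_{I_{j+1}})^{j}(q\ol w_{I_{j+1}}-q^{-1}\ol w_{I_{k+1}})^{j-1}$ since the exponent $N-k$ with $k\mapsto$ (new index) turns into the stated $j$; and (v) similarly rewrite $\prod_{j=2}^{N-1}(\ol w_{J_1}\cup\cdots\cup\ol w_{J_{j-1}}-\ol w_{J_{j+1}}\cup\cdots\cup\ol w_{J_N})$ as $\prod_{j=2}^{N-1}(\ol w_{I_{j+1}}\cup\cdots\cup\ol w_{I_N}-\ol w_{I_1}\cup\cdots\cup\ol w_{I_{j-1}})$, noting that the difference $\ol u-\ol v$ picks up a global sign under $\ol u\leftrightarrow\ol v$ per factor, but the number of factors in each such product is $|J_1\cup\cdots\cup J_{j-1}|\cdot|J_{j+1}\cup\cdots\cup J_N|$, whose parity I must confirm is absorbed consistently (in fact the reversal of the whole product over $j$ pairs up $j$ with $N-j$ so the signs cancel in pairs, leaving the clean form shown).

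The main obstacle I anticipate is the bookkeeping in steps (iv) and (v): making sure that the automorphism-induced reindexing of the multiparameter products, combined with the order-reversal of the partition labels, reproduces \emph{exactly} the exponents $j$ and $j-1$ and the argument sets $\ol w_{I_{j+1}}\cup\cdots\cup\ol w_{I_N}$ in \eqref{showrelationGZ}, including the correct handling of the sign ambiguity in the Vandermonde-type differences $\ol u-\ol v$. This is purely combinatorial but error-prone; I would organize it by first writing \eqref{relationGZ} with a neutral relabelling $J_a = I_{\pi(a)}$ for the reversal permutation $\pi(a)=N+1-a$ restricted appropriately to the index range $1,\dots,N-1$ (shifted to $2,\dots,N$), then substitute and simplify each product factor one at a time. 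A secondary, minor point is to confirm that $\varphi$ indeed intertwines the actions on the specific dual vectors $e_{1^{|\ol u^1|},\dots}^*$ that implicitly appear when one unwinds these Bethe-vector strings via Proposition~\ref{relateoffshellbetherectangular} — but since $\varphi$ is just a basis relabelling this is automatic. No genuinely new idea beyond the symmetry $\varphi$ is required; the content is entirely in the careful transport of the scalar prefactors.
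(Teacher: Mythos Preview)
Your proposal has a logical circularity. In the paper, the proposition containing \eqref{relationGZ} is stated without proof; a few lines later the paper explicitly says that ``\eqref{relationGZ} follows from \eqref{showrelationGZ}'' by the very relabelling automorphism you describe. In other words, \eqref{relationGZ} is not an independently established input you can draw on: it is itself a corollary of Proposition~\ref{showrelationprop}. Using the automorphism $\varphi$ to go from \eqref{relationGZ} back to \eqref{showrelationGZ} therefore proves nothing.

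The paper's actual proof of \eqref{showrelationGZ} is direct and does not rely on \eqref{relationGZ} at all. It takes the two expressions $B$ and $\widehat{B}$ for the universal Bethe vector from Pakuliak--Ragoucy--Slavnov (Theorem~\ref{bethevectorequivalence}), passes to the tensor-product-of-vector representation, multiplies by the common overall factor $(\ol t^1-\ol w)\prod_{\ell=2}^{N-1}(\ol t^\ell-\ol t^{\ell-1})$, acts on $e_{1^n}$, and then specializes $\ol t^j = \ol w_{I_{j+1}}\cup\cdots\cup\ol w_{I_N}$. Under this specialization, the sum over partitions in each of $B$ and $\widehat{B}$ collapses to a single term (because many subsets are forced to be empty, and factors such as $(\ol t^k_{1,j}-\ol t^{k-1}_{1,j'})$ kill all other partitions), yielding the two sides of \eqref{showrelationGZ} as \eqref{psi-expr1-final} and \eqref{psi-expr2-final}. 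The identity $B=\widehat{B}$ then gives the result. This is the substance you are missing: an independent derivation of one of the two sides, after which the automorphism can be invoked to get the other.

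Your observations about $\varphi$ and the index bookkeeping are correct as far as they go, and indeed the paper uses exactly that dictionary---but in the direction \eqref{showrelationGZ}$\Rightarrow$\eqref{relationGZ}, not the reverse.
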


Since the $R$-matrix elements are related by
$[\tilde{R}(u,v)]_{ij}^{k \ell}=[R(u,v)]_{N+1-i,N+1-j}^{N+1-k,N+1-\ell}$,
\eqref{relationGZ} follows from
\eqref{showrelationGZ} by replacing $\tilde{T}_{ij}$ by $T_{N+1-i,N+1-j}$,
$\overline{w}_{I_j}$ by $\overline{w}_{J_{N+1-j}}$ and $e_{1^n}$ by $e_{N^n}$.

We use two different expressions for the same object
(nested Bethe vectors) in \cite{PRS} for $U_q(\widehat{\mathfrak{gl}_{N}})$.
Here we present the minimal necessary results. We refer to \cite{PRS} for more details.  
For the description, we introduce symbols for the following rational function
\[
	f(u,v) = \frac{qu-q^{-1}v}{u-v},
\]
	and the following determinant
	\[
		K_n^{\mathrm{rat}}(\ol{u}|\ol{v}) := 
		\frac{\prod_{i,j =1}^n(qu_i-q^{-1}v_j)}{\prod_{1\leq i <j \leq n}(u_i-u_j)(v_j-v_i)}
		\det_{1 \le i,j \le n} \left[\frac{q-q^{-1}}{(qu_i-q^{-1}v_j)(u_i-v_j)}\right].
	\]
	The `left' and `right' versions of the Izergin-Korepin determinant is defined as
	\[
		K^{(l),\mathrm{rat}}_n(\ol{u}|\ol{v}) := 
		K_n(\ol{u}|\ol{v}) \prod_{j=1}^n u_j;
		\qquad 
		K^{(r),\mathrm{rat}}_n(\ol{u}|\ol{v}) := 
		K_n(\ol{u}|\ol{v}) \prod_{j=1}^n v_j.
	\]

	These correspond to determinant representations of the domain wall boundary partition functions
	of the six-vertex model using the rational version of the $R$-matrix
	\[  \tilde{R}^{\mathrm{rat}}(u,v)=(u-v)^{-1} \tilde{R}(u,v).
	\]
	See Figure~\ref{f:IK-det-lr}.
	Then, we define the polynomial versions of these quantities as follows:
	\begin{align}
		K^{(l)}_n(\ol{u}|\ol{v}) &:=  \prod_{i,j=1}^n (u_i-v_j) K^{(l),\mathrm{rat}}_n(\ol{u}|\ol{v}) \nonumber \\
		&=\frac{1}{\prod_{1\leq i <j \leq n}(u_i-u_j)(v_j-v_i)}
		\det_{1 \le i,j \le n} \left[
		(q-q^{-1})u_i
		\prod_{\substack{k=1 \\ k \neq j}}^n
		(qu_i-q^{-1}v_k)(u_i-v_k)
		\right], \label{leftDW}  \\
		K^{(r)}_n(\ol{u}|\ol{v}) &:=  \prod_{i,j=1}^n (u_i-v_j) K^{(r),\mathrm{rat}}_n(\ol{u}|\ol{v}) \nonumber \\
		&=\frac{1}{\prod_{1\leq i <j \leq n}(u_i-u_j)(v_j-v_i)}
		\det_{1 \le i,j \le n} \left[
		(q-q^{-1})v_j
		\prod_{\substack{k=1 \\ k \neq j}}^n
		(qu_i-q^{-1}v_k)(u_i-v_k)
		\right]. \label{rightDW}
	\end{align}
	These correspond to the domain wall partition functions with using
	$\tilde{R}(u,v)$ instead of $\tilde{R}^{\mathrm{rat}}(u,v)$.
	See also
	\eqref{introductionDW}.
	
	We use the following properties in the next two subsections
	\begin{align}
	K^{(l)}_n(\ol{u}|\ol{u})=K^{(r)}_n(\ol{u}|\ol{u})=q \overline{u}-q^{-1} \overline{u},
	\label{factorizationIZrl}
	\end{align}
	which can be understood from graphical descriptions of the domain wall boundary partition functions,
	or by taking the limit $u_j \to v_j$, $j=1,\dots,n$ of \eqref{leftDW}, \eqref{rightDW}.

	\tikzset{
        coord/.style={draw, circle, inner sep=0pt, minimum size=14pt},
        %minicoord/.style={draw,circle,inner sep=0pt, minimum size=9pt,execute at begin node=\footnotesize}
        }

	\begin{figure}[h]
		\includegraphics{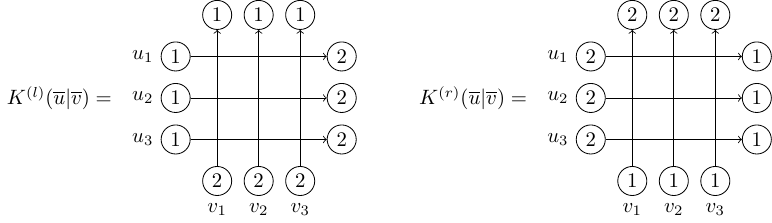}
		\caption{Graphical description of the `left' and `right' Izergin-Korepin determinants.}
		\label{f:IK-det-lr}
	\end{figure}
	
	The description of the universal Bethe vector in \cite{PRS}
	uses the $L$-operator using the rational $\tilde{R}^{\mathrm{rat}}(u,v)$ version,
	which we denote in this paper by $\tilde{T}^{\mathrm{rat}}(u)$.

	%The specialization of these determinants can be calculated using the graphical argument described in \ref{}.

We introduce the set of spectral variables $\bar{t}^{\,k}$, $k=1,\dots,N-1$
and decomposition into disjoint union of sets
\[
\bar{t}^{\,k}
=\;
\bigcup_{i=1}^{k}
\;\;\bigcup_{j=k}^{N-1}
\bar{t}_{i,j}^{\,k},
\qquad
1 \le i \le k \le j \le N-1,
\]
satisfying
$
|\bar{t}^{\,k}_{i,j}| \;=\; | \bar{t}^{\,k'}_{i,j}|,
\ \forall\, k,k' .$

We introduce two different types of ordering of indices,
\[
	i,j \prec i',j' \iff i < i' \text{ or } i=i', j<j',
\]
\[
	i,j \prec^t i',j' \iff j < j' \text{ or } j=j', i<i'.
\]

Two expressions in \cite[Proposition 3.1]{PRS} for the universal Bethe vector
lead to the following identity.
\begin{theorem}\cite[Proposition 3.1]{PRS} \label{bethevectorequivalence}
The following relation holds:
\begin{align}
B(\bar{t}^{\,1},\bar{t}^{\,2},\dots,\bar{t}^{\,N-1})=\widehat{B}(\bar{t}^{\,1},\bar{t}^{\,2},\dots,\bar{t}^{\,N-1}),
\end{align}
where
\begin{align}
			B(\bar{t}^{\,1},\bar{t}^{\,2},\dots,\bar{t}^{\,N-1})=&\sum_{\mathrm{part}}
			\prod_{k=1}^{N-1} \prod_{i,j \prec i',j'}
			f(\ol{t}^k_{i',j'},\ol{t}^k_{i,j})
			\prod_{k=2}^{N-1} \left(\prod_{i,j \prec i',j'} f(\ol{t}^k_{i,j},\ol{t}^{k-1}_{i',j'})
			\prod_{i<j} K^{(l),\mathrm{rat}}(\ol{t}^k_{i,j}|\ol{t}^{k-1}_{i,j})\right)
			\nonumber
			\\
&			\times
			\prod^{\longrightarrow}_{1 \leq k \leq N-1} 
			\left(
				\prod^{\longleftarrow}_{N \geq j > k} \tilde{T}^{\mathrm{rat}}_{kj}(\ol{t}^k_{k,j-1})
			\right)
			\prod_{k=2}^{N-1}\prod_{i,j\prec k,k} \tilde{T}^{\mathrm{rat}}_{k,k}(\ol{t}^k_{i,j}),
			\label{firstuniversalBethe}
		\end{align}

\begin{align}
	\widehat{B}(\bar{t}^{\,1},\bar{t}^{\,2},\dots,\bar{t}^{\,N-1})=&\sum_{\mathrm{part}}
	\prod_{k=1}^{N-1} \prod_{i,j \prec i',j'}
	f(\ol{t}^k_{i',j'},\ol{t}^k_{i,j})
	\prod_{k=2}^{N-1} \left(\prod_{i,j \prec^t i',j'} f(\ol{t}^k_{i,j},\ol{t}^{k-1}_{i',j'})
	\prod_{i<j} K^{(r),\mathrm{rat}}(\ol{t}^k_{i,j}|\ol{t}^{k-1}_{i,j})\right)
	\nonumber
	\\
	&\times
	\prod^{\longleftarrow}_{N-1 \geq k \geq 1} 
	\left(
		\prod^{\longrightarrow}_{1 \leq j \leq k} \tilde{T}^{\mathrm{rat}}_{j,k+1}(\ol{t}^k_{j,k})
	\right)
	\prod_{k=1}^{N-2}\prod_{k,k\prec^t i,j} \tilde{T}^{\mathrm{rat}}_{k+1,k+1}(\ol{t}^k_{i,j}).
	\label{seconduniversalBethe}
\end{align}

Here, we take sum over all partitions of $\overline{t}^k$ for each $k \ (k=1,\dots,N-1)$ into subsets
$\bar{t}_{i,j}^{\,k}, 1 \le i \le k \le j \le N-1$ satisfying
$
|\bar{t}^{\,k}_{i,j}| \;=\; | \bar{t}^{\,k'}_{i,j}|,
\ \forall\, k,k' .$
The ordered product symbol, denoted as
 $\displaystyle \prod^{\longrightarrow}$, indicates a product where terms are multiplied in ascending index order. Conversely, the symbol $\displaystyle \prod^{\longleftarrow}$ represents a product where terms are multiplied in descending index order.

\end{theorem}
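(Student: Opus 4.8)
\emph{Proof plan.} Since the statement is Proposition~3.1 of \cite{PRS}, the quickest option is simply to quote it; here I sketch how I would prove it directly, in a way compatible with the tools of this paper. The starting point is that $B$ and $\widehat{B}$ are two rewritings of a single object, the universal off-shell nested Bethe vector of $U_q(\widehat{\mathfrak{gl}}_N)$, so the content of the assertion is that two normal-ordering prescriptions for a product of creation operators $\tilde T^{\mathrm{rat}}_{ij}$ (with $i<j$), threaded through the diagonal elements $\tilde T^{\mathrm{rat}}_{kk}$, produce the same sum over partitions. The natural framework is induction on the rank $N$. For $N=2$ the only admissible partition is $\bar t^{\,1}=\bar t^{\,1}_{1,1}$, every combinatorial prefactor is an empty product, and both sides collapse to $\tilde T^{\mathrm{rat}}_{12}(\bar t^{\,1})$, so the identity is trivial.

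For the inductive step I would ``peel off'' one level of nesting from each side. In $B$ the outermost operators are $\tilde T^{\mathrm{rat}}_{1j}$, $2\le j\le N$, sitting above a rank-$(N-1)$ Bethe vector built on the indices $\{2,\dots,N\}$, while in $\widehat{B}$ the outermost operators are $\tilde T^{\mathrm{rat}}_{jN}$, $1\le j\le N-1$, sitting above a rank-$(N-1)$ Bethe vector built on $\{1,\dots,N-1\}$. In each case one extracts, from the coproduct structure of the current realization, a factorization: a sum over partitions of $\bar t^{\,1}$ into the blocks $\bar t^{\,1}_{1,j}$, weighted by the rational factors $f$ together with, whenever a whole block must be carried past a level, a domain-wall partition function, which by \eqref{domainwallpartitionIKdet} is an Izergin--Korepin determinant; carrying such a block ``to the left'' produces $K^{(l)}$ and the order $\prec$, carrying it ``to the right'' produces $K^{(r)}$ and the order $\prec^{t}$ (cf.\ Figure~\ref{f:IK-det-lr}). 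One then substitutes the rank-$(N-1)$ identity (the inductive hypothesis) into the inner factor, reorders the rank-one creation string $\prod_j \tilde T^{\mathrm{rat}}_{1j}$ against $\prod_j \tilde T^{\mathrm{rat}}_{jN}$ using the $\tilde R$-analogue of the defining relations \eqref{eq:RTT++}, and checks that the two outer prefactors coincide; the ``diagonal'' specializations $K^{(l)}(\bar t|\bar t)=K^{(r)}(\bar t|\bar t)=q\bar t-q^{-1}\bar t$ from \eqref{factorizationIZrl} are exactly what make those contributions agree.

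The main obstacle is this last matching: keeping track of the full sum over admissible partitions $\bar t^{\,k}=\bigcup_{i,j}\bar t^{\,k}_{i,j}$ so that, monomial by monomial in the creation operators, the $\prec$-ordered prefactor built with $K^{(l)}$ equals the $\prec^{t}$-ordered prefactor built with $K^{(r)}$. Performing this by brute force with \eqref{eq:RTT++} is unwieldy; the clean device --- the one used in \cite{PRS} --- is to pass to the Drinfeld current presentation and write both $B$ and $\widehat{B}$ as the image, under the Khoroshkin--Pakuliak projection onto the ``$+$'' subalgebra, of one and the same ordered product of currents, the two explicit formulas arising from pushing that projection through the product from the left or from the right, with the reordering relations for currents (whose structure constants are again $f$ and the Izergin--Korepin determinants) making the equality automatic. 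Alternatively, one can avoid the universal statement entirely by evaluating both sides on the highest-weight vector of a tensor product of vector evaluation modules at generic points: each side becomes a partition function of the kind in Section~2, the relation \eqref{relatingpartitionfunctions} together with Theorem~\ref{offshellbethetrigonometricweight} identifies it with a trigonometric weight function, and the Yang--Baxter symmetry gives $B=\widehat{B}$ on every such module; granting that these modules are jointly faithful on the relevant subalgebra, this reproves the identity.
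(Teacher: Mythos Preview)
The paper does not prove this theorem at all: it is quoted verbatim from \cite{PRS} as Proposition~3.1, with no argument given in the present paper. You correctly identify this in your first sentence, so on that level your proposal matches the paper exactly.

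Everything after your first sentence is additional material the paper does not contain. Your sketch of the current-realization/projection approach is indeed how \cite{PRS} actually establish the identity, so that part is accurate as a description of the upstream proof. Your inductive ``peeling'' sketch is plausible but, as you yourself note, the combinatorial matching of the $\prec$/$K^{(l)}$ prefactors with the $\prec^{t}$/$K^{(r)}$ prefactors is the real content and is not carried out; this is more an outline than a proof. Your alternative route via evaluation on tensor products of vector modules has a genuine gap: the clause ``granting that these modules are jointly faithful on the relevant subalgebra'' is doing all the work, and faithfulness of this family on the (non-commutative) subalgebra generated by the $\tilde T^{\mathrm{rat}}_{ij}$ with $i\le j$ is not something available in this paper or immediately from the literature---you would need to justify it separately before that argument closes.
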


{\it Proof of Proposition \ref{showrelationprop}.
}

We multiply
\eqref{firstuniversalBethe} and \eqref{seconduniversalBethe}
by the same overall factor, take the same vector representation and
act on the highest weight vector,
which we denote by $\Psi$ and $\widetilde{\Psi}$. Explicitly,
\begin{align}
		\Psi &= (\ol{t}^1-\ol{w})\prod_{\ell=2}^{N-1} \left(\ol{t}^{\ell}-\ol{t}^{\ell-1}\right)
		B(\bar{t}^{\,1},\bar{t}^{\,2},\dots,\bar{t}^{\,N-1}) e_{1^n}, \\
		\widetilde{\Psi} &= (\ol{t}^1-\ol{w})\prod_{\ell=2}^{N-1} \left(\ol{t}^{\ell}-\ol{t}^{\ell-1}\right)
		\widehat{B}(\bar{t}^{\,1},\bar{t}^{\,2},\dots,\bar{t}^{\,N-1}) e_{1^n}.
		\end{align}

We further specialize the variables $\ol{t}^j$ in the same way
\begin{align*}
		\ol{t}^j &= \ol{w}_{I_{j+1}} \cup \ol{w}_{I_{j+2}} \cup \cdots \cup \ol{w}_{I_N},
		\ \ \ j=1,\dots,N-1,
	\end{align*}
which yields
\eqref{psi-expr1-final}
and
\eqref{psi-expr2-final} respectively.
Together with Theorem \ref{bethevectorequivalence},
we get \eqref{showrelationGZ}. \hfill$\square$ \\

In subsections \ref{sec:firstspec} and \ref{sec:secondspec},
we provide the details of specializing $\Psi$ and $\widetilde{\Psi}$
to get \eqref{psi-expr1-final}
and
\eqref{psi-expr2-final}.

\subsection{Relation to the trace formula}

For the $U_q(\widehat{{\mathfrak{gl}}}_3)$ case, the Tarasov-Varchenko trace formula for the Bethe vector \cite{TVsigma} is given by 
\begin{multline}
	\mathrm{tr}_{k_1,\dots,k_a,n_1,\dots,n_b}\bigg(T_{k_1}(u_1)\cdots T_{k_a}(u_a) T_{n_1}(v_1)\cdots T_{n_b}(v_b) \prod_{i=1}^b\prod_{j=1}^a R_{n_i,k_j}(v_i,u_j)
	\\
	\times E_{k_1}^{23} \cdots E_{k_a}^{23} E_{n_1}^{12}\dots E_{n_b}^{12}\bigg) \ket{\Omega},
\end{multline}
where we have used notations similar to \cite{Slavnov}. 
Here $\ket{\Omega}$ is the vacuum state of a representation of $U_q(\widehat{{\mathfrak{gl}}}_3)$, $E_{a}^{ij}$ are the standard matrix units acting on space $V_a$. 
See Figure~\ref{fig:traceformula} for a diagrammatic representation. 

\begin{figure}
	\centering
	\includegraphics{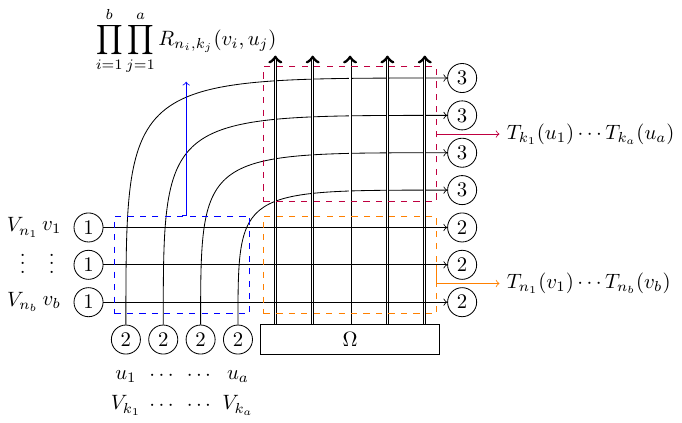}
	\caption{A pictorial representation of the trace formula for $U_q(\widehat{{\mathfrak{gl}}}_3)$.}
	\label{fig:traceformula}
\end{figure}

In the particular case where the quantum space is $\underbrace{V \otimes \cdots \otimes V}_{p}$ with associated spectral parameters $\overline{w} = \{w_1, \dots, w_p\}$ and $\ket{\Omega} = e_{3^p}$, many of the $L$ operators have only one nonzero component due to \eqref{icerule}, as depicted in Figure~\ref{fig:traceformulafreeze}.
\begin{figure}
	\centering
	\includegraphics{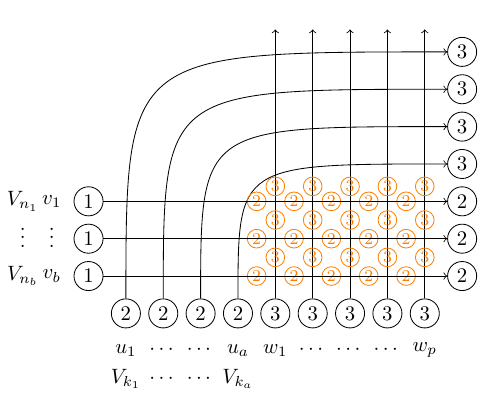}
	\caption{The lower-right grid becomes fully determined using \eqref{icerule}.}
	\label{fig:traceformulafreeze}
\end{figure}
As a result, the trace formula is equivalent to Figure~\ref{fig:traceformularesult}.
\begin{figure}
	\centering
	\includegraphics{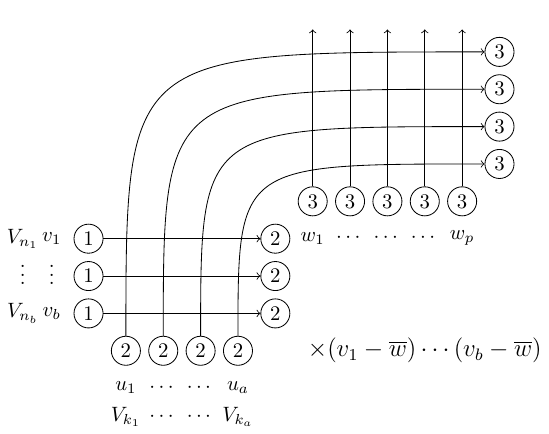}
	\caption{The result of applying \eqref{icerule} to Figure~\ref{fig:traceformulafreeze} }
	\label{fig:traceformularesult}
\end{figure}
Specifying the top boundary of this figure essentially means taking a component of the
trace formula of the nested Bethe vector
and corresponds to $N=3$ case of Figure \ref{fig:partitionfn} in the previous section.
%Setting $N=3$, the right hand side of \eqref{relationGZ} 
As in the previous section, further specializing $\{v_1, \dots v_b\} \to \overline{w}_{J_1}$ and $\{u_1, \dots u_a\} \to  \overline{w}_{J_1} \cup \overline{w}_{J_2}$
gives
\[
	T_{31}(\overline{w}_{J_1};\overline{w})
	T_{32}(\overline{w}_{J_2};\overline{w})
	e_{3^p},
\]
which agrees with the right-hand side of \eqref{relationGZ} in the case $N=3$ up to an overall factor.

\subsection{First specialization} \label{sec:firstspec}

	%\begin{proposition}[Proposition 3.1 of \cite{pakuliakBetheVectorsQuantum2014}]
	%	\begin{multline*}
	%		B^n(\ol{t_n})=\sum_{\mathrm{part}}
	%		\prod_{k=1}^{N-1} \prod_{i,j \prec i',j'}
	%		f(\ol{t^k_{i',j'}},\ol{t^k_{i,j}})
	%		\prod_{k=2}^{N-1} \left(\prod_{i,j \prec i',j'} f(\ol{t^k_{i,j}},\ol{t^{k-1}_{i',j'}})
	%		\prod_{i<j} K^{(l)}(\ol{t^k_{i,j}}|\ol{t^{k-1}_{i,j}})\right)
	%		\\
	%		\times
	%		\prod^{\longrightarrow}_{1 \leq k \leq N-1} 
	%		\left(
	%			\prod^{\longleftarrow}_{N \geq j > k} T_{kj}(\ol{t^k_{k,j-1}})
	%		\right)
	%		\prod_{k=2}^{N-1}\prod_{i,j\prec k,k} T_{k,k}(\ol{t^k_{i,j}}).
	%	\end{multline*}
	%\end{proposition}

	We take the tensor product of vector representation for the universal Bethe vector
$B(\bar{t}^{\,1},\bar{t}^{\,2},\dots,\bar{t}^{\,N-1})$ \eqref{firstuniversalBethe},
i.e. take the $L$-operator to be
$
\tilde{T}^{\mathrm{rat}}(u)=
\tilde{T}^{\mathrm{rat}}(u;\overline{w})=\tilde{R}^{\mathrm{rat}}_{0n}(u,w_n) \cdots \tilde{R}_{02}^{\mathrm{rat}}(u,w_2) \tilde{R}_{01}^{\mathrm{rat}}(u,w_1)
$. We multiply by the overall factor $(\ol{t}^1-\ol{w})\prod_{\ell=2}^{N-1} \left(\ol{t}^{\ell}-\ol{t}^{\ell-1}\right)$
and act on the highest weight vector $e_{1^n}$.
	
		\begin{align}
		\Psi &:= (\ol{t}^1-\ol{w})\prod_{\ell=2}^{N-1} \left(\ol{t}^{\ell}-\ol{t}^{\ell-1}\right)
		B(\bar{t}^{\,1},\bar{t}^{\,2},\dots,\bar{t}^{\,N-1}) e_{1^n}
		\nonumber \\
		&=
		\left(\ol{t}^1-\ol{w}\right)\prod_{\ell=2}^{N-1} \left(\ol{t}^{\ell}-\ol{t}^{\ell-1}\right)
		\sum_{\mathrm{part}}
			\prod_{k=1}^{N-1} \prod_{i,j \prec i',j'}
			\frac{q\ol{t}^k_{i',j'}-q^{-1}\ol{t}^k_{i,j}}{\ol{t}^k_{i',j'}-\ol{t}^k_{i,j}}
			\nonumber
			\\
			&\times
			\prod_{k=2}^{N-1} \left(\prod_{i,j \prec i',j'} \frac{q\ol{t}^k_{i,j}-q^{-1}\ol{t}^{k-1}_{i',j'}}{\ol{t}^k_{i,j}-\ol{t}^{k-1}_{i',j'}}
			\prod_{i<j} K^{(l),\mathrm{rat}}(\ol{t}^k_{i,j}|\ol{t}^{k-1}_{i,j})\right)
			\nonumber
			\\
			&\times
			\prod^{\longrightarrow}_{1 \leq k \leq N-1} 
				\prod^{\longleftarrow}_{N \geq j > k} \tilde{T}^{\mathrm{rat}}_{kj}(\ol{t}^k_{k,j-1};\ol{w})
			\prod_{k=2}^{N-1}\prod_{i,j\prec k,k} \tilde{T}^{\mathrm{rat}}_{k,k}(\ol{t}^k_{i,j};\ol{w})
			e_{1^n}.
	\end{align}

	%Let $\ket{vac} = \ket{11\cdots1}$ denote the highest weight vector of the vector representation. 
Note that for $j>k\geq 2$, 
	\[
		\tilde{T}^{\mathrm{rat}}_{kk}(u;\overline{w}) e_{1^n} = e_{1^n} \qquad \text{and} \qquad  \tilde{T}^{\mathrm{rat}}_{kj}(u;\overline{w}) e_{1^n} = 0.
	\]
	This implies that $\ol{t}^k_{k,j-1} = \varnothing$ for $j >k \geq 2$, for all remaining partitions. 
	This effect can be visualized as in Figure~\ref{f:emptyset}.

	\begin{figure}[ht]
		\begin{tikzpicture}[every pin/.style={red, inner sep=1pt},pin distance=5pt,every pin edge/.style={red, double}]
			\foreach \y in {2,3}
			{
				\foreach \x in {\y,...,4}
				{
					\ifthenelse{\x<4}{
						\node [anchor=center] (t\y\x) at (\x,6-\y) {$\ol{t}^\y_{2,\x}$};
						\node [anchor=center] at (\x+0.5,6-\y) {$\cup$};
					}{
						\node [anchor=center]at (\x,6-\y) {$\cdots$};
						\node [anchor=center] at (\x+0.5,6-\y) {$\cup$};
					}
				}
				\node[anchor=west] (tj\y) at (4.7, 6-\y) {$\ol{t}^{\y}_{2,N-1}$};
			}
			\node [anchor=center]at (4,2.1) {$\ddots$};
			\node [anchor=west] (tjj) at (4.7,1) {$\ol{t}^{N-1}_{2,N-1}$};	

			\draw[thick] (t23) -- (t33);
			\draw[thick] (tj2) -- (tj3);

			\draw[thick] (tj3) -- node [midway,fill=white,inner sep=7pt] {$\rvdots$} (tjj);

			%%%%%%%%%%%%%%%%%%%%%%%%%%%%%%
			\node at (7,2.5){$\dots$} ;
			%%%%%%%%%%%%%%%%%%%%%%%%%%%%%%
			\tikzset{shift={(9,2)}}

			\node [anchor=center] (tn2n2) at (0,1) {$\ol{t}^{N-2}_{N-2,N-2}$};
			\node [anchor=center] (tn2n1) at (2,1) {$\ol{t}^{N-2}_{N-2,N-1}$};
			\node [anchor=center] at (1,1) {$\cup$};
			\node [anchor=center] (tn1n1) at (2,0) {$\ol{t}^{N-1}_{N-2,N-1}$};
			\draw [thick] (tn2n1) -- (tn1n1);

			%%%%%%%%%%%%%%%%%%%%%%%%%%%%%%
			\node [anchor=center] (tnn) at (5,0.5) {$\ol{t}^{N-1}_{N-1,N-1}$};
			%%%%%%%%%%%%%%%%%%%%%%%%%%%%%%

			\foreach \n in {t22,t23,tj2,tn2n2,tn2n1,tnn}{
				\node also [pin=45:$\varnothing$] (\n);
			}

		\end{tikzpicture}
		\caption{Some sets of parameters become empty sets.}
		\label{f:emptyset}
	\end{figure}
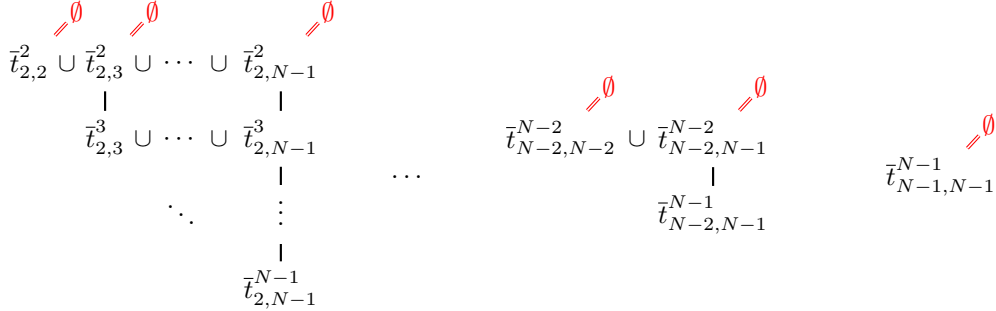
	In Figure~\ref{f:emptyset}, the sets connected by straight lines must have the same size. 
	As a result, every set in that diagram is equal to the empty set, and the only remaining sets of parameters are of the form $t_{1,j}^{\ell}$, arranged as follows:
	\begin{center}
		\begin{tikzpicture}
			\foreach \y in {1,2,3}
			{
				\foreach \x in {\y,...,4}
				{
					\ifthenelse{\x<4}{
						\node [anchor=center] at (\x,6-\y) {$\ol{t}^\y_{1,\x}$};
						\node [anchor=center]at (\x+0.5,6-\y) {$\cup$};
					}{
						\node [anchor=center]at (\x,6-\y) {$\cdots$};
						\node [anchor=center] at (\x+0.5,6-\y) {$\cup$};
					}
				}
				\node[anchor=west] at (4.7, 6-\y) {$\ol{t}^{\y}_{1,N-1}$};
				\node[anchor=west] at (6.7,6-\y) {\large $=\ol{t}^{\y}$};
			}
			\node [anchor=center]at (4,2) {$\ddots$};
			\node [anchor=center]at (5.5,2) {$\vdots$};
			\node [anchor=center]at (7.0,2) {$\vdots$};
			\node [anchor=west]at (4.7,1) {$\ol{t}^{N-1}_{1,N-1}$};	
			\node[anchor=west] at (6.7,1) {\large $=\ol{t}^{N-1}.$};
		\end{tikzpicture}
	\end{center}

	Taking this into account and using the notations for the polynomial version, the Bethe vector simplifies as follows:
	\begin{multline}
		\Psi
		=
		\left(\ol{t}^1-\ol{w}\right)\prod_{\ell=2}^{N-1} \left(\ol{t}^{\ell}-\ol{t}^{\ell-1}\right)
		\sum_{\mathrm{part}}
			\prod_{k=1}^{N-1} \prod_{j < j'}
			\frac{q\ol{t}^k_{1,j'}-q^{-1}\ol{t}^k_{1,j}}{\ol{t}^k_{1,j'}-\ol{t}^k_{1,j}}
			\\
			\times
			\prod_{k=2}^{N-1} \left(\prod_{j < j'} \frac{q\ol{t}^k_{1,j}-q^{-1}\ol{t}^{k-1}_{1,j'}}{\ol{t}^k_{1,j}-\ol{t}^{k-1}_{1,j'}}
			\prod_{k \le j} \frac{K^{(l)}(\ol{t}^k_{1,j}|\ol{t}^{k-1}_{1,j})}{\ol{t}^k_{1,j}-\ol{t}^{k-1}_{1,j}}\right)
				\prod^{\longleftarrow}_{N \geq j > 1} \frac{\tilde{T}_{1j}(\ol{t}^1_{1,j-1}|\ol{w})}{\ol{t}^1_{1,j-1}-\ol{w}}
			 e_{1^n}.
	\end{multline}

	We now distribute the initial factors among the later factors
	\[
		\left(\ol{t}^1-\ol{w}\right)\prod_{\ell=2}^{N-1} \left(\ol{t}^{\ell}-\ol{t}^{\ell-1}\right)
		=
		\prod_{j=1}^{N-1} \left(\ol{t}^1_{1,j}-\ol{w}\right)\prod_{\ell=2}^{N-1}\prod_{j=\ell}^{N-1}\prod_{k=\ell-1}^{N-1} \left(\ol{t}^{\ell}_{1,j}-\ol{t}^{\ell-1}_{1,k}\right),
	\]
	to get
	\begin{multline}
		\Psi
		=
		\sum_{\mathrm{part}}
		\prod_{j=1}^{N-1} \cancel{\left(\ol{t}^1_{1,j}-\ol{w}\right)}\prod_{\ell=2}^{N-1}\prod_{j=\ell}^{N-1}\prod_{\substack{k=\ell-1 \\ k\neq j}}^{N-1} \left(\ol{t}^{\ell}_{1,j}-\ol{t}^{\ell-1}_{1,k}\right)
			\prod_{k=1}^{N-1} \prod_{j < j'}
			\frac{q\ol{t}^k_{1,j'}-q^{-1}\ol{t}^k_{1,j}}{\ol{t}^k_{1,j'}-\ol{t}^k_{1,j}}
			\\
			\times
			\prod_{k=2}^{N-1} \left(\prod_{j < j'} \frac{q\ol{t}^k_{1,j}-q^{-1}\ol{t}^{k-1}_{1,j'}}{\ol{t}^k_{1,j}-\ol{t}^{k-1}_{1,j'}}
			\prod_{k \le j} \frac{K^{(l)}(\ol{t}^k_{1,j}|\ol{t}^{k-1}_{1,j})}{\cancel{\ol{t}^k_{1,j}-\ol{t}^{k-1}_{1,j}}}\right)
				\prod^{\longleftarrow}_{N \geq j > 1} \frac{\tilde{T}_{1j}(\ol{t}^1_{1,j-1}|\ol{w})}{\cancel{\ol{t}^1_{1,j-1}-\ol{w}}}
			 e_{1^n}.
	\end{multline}

	There is another cancellation. Write
	\[
		\prod_{j=\ell}^{N-1}\prod_{\substack{k=\ell-1 \\ k\neq j}}^{N-1} \left(\ol{t}^{\ell}_{1,j}-\ol{t}^{\ell-1}_{1,k}\right)
		=
		\prod_{j=\ell}^{N-1}\prod_{\ell-1\leq k < j} \left(\ol{t}^{\ell}_{1,j}-\ol{t}^{\ell-1}_{1,k}\right)
		\prod_{j=\ell}^{N-1}\prod_{j < k \leq N-1} \left(\ol{t}^{\ell}_{1,j}-\ol{t}^{\ell-1}_{1,k}\right).
	\]
	The second product here cancels with the denominator:
	\begin{multline}
		\Psi
		=
		\sum_{\mathrm{part}}\prod_{k=2}^{N-1}\prod_{k \leq j' < j \leq N-1} \left(\ol{t}^k_{1,j}-\ol{t}^{k-1}_{1,j'}\right)
			\prod_{k=1}^{N-1} \prod_{j < j'}
			\frac{q\ol{t}^k_{1,j'}-q^{-1}\ol{t}^k_{1,j}}{\ol{t}^k_{1,j'}-\ol{t}^k_{1,j}}
			\\
			\times
			\prod_{k=2}^{N-1} \left(\prod_{j < j'} 
			\left(q\ol{t}^k_{1,j}-q^{-1}\ol{t}^{k-1}_{1,j'}\right)
			\prod_{k \le j} K^{(l)}(\ol{t}^k_{1,j}|\ol{t}^{k-1}_{1,j})\right)
				\prod^{\longleftarrow}_{N \geq j > 1} \tilde{T}_{1j}(\ol{t}^1_{1,j-1}|\ol{w})
			 e_{1^n}.
	\end{multline}

	We now specialize the sets of variables to 
	\begin{align*}
		\ol{t}^1 &= \ol{w}_{I_2} \cup \ol{w}_{I_3} \cup \cdots \cup \ol{w}_{I_N}
		\\
		\ol{t}^2 &= \ol{w}_{I_3} \cup \cdots \cup \ol{w}_{I_N}
		\\
		&\vdots 
		\\
		\ol{t}^{N-1} &= \ol{w}_{I_N}.
	\end{align*}
	Observe that there are factors in the numerator which lead to zeros for certain configurations of parameters:
	\[
		\prod_{k=2}^{N-1}\prod_{k \leq j' < j \leq N-1} \left(\ol{t}^k_{1,j}-\ol{t}^{k-1}_{1,j'}\right).
	\] 
	This has the following effect: each set is ``connected to'' all sets that are strictly to the left of it in the row directly above it. 
	If these connected sets share any elements, then the term corresponding to that partition vanishes.
	See Figure~\ref{f:sets-spec-example} for an example.
	\begin{figure}[ht]
		\centering
		\begin{tikzpicture}
			\foreach \y in {1,2,3}
			{
				\foreach \x in {\y,...,3}
				{
					\node [anchor=center] (t\x\y) at (\x,6-\y) {$\ol{t}^\y_{1,\x}$};
					\node [anchor=center]at (\x+0.5,6-\y) {$\cup$};
				}
				\node[anchor=center] (t4\y) at (4,6-\y) {$\ol{t}^{\y}_{1,4}$};
			}
			\node [anchor=center] (t44) at (4,2) {$\ol{t}^{4}_{1,4}$};
			
			\begin{scope}[thick]
				\draw (t21) -- (t22);
				\draw (t31) -- (t32);
				\draw (t41) -- (t42);
				\draw (t32) -- (t33);
				\draw (t42) -- (t43);
				\draw (t43) -- (t44);				
			\end{scope}

			\begin{scope}[dashed, purple]
				\draw (t44) -- (t33);
				\draw (t33) -- (t22);
				\draw (t22) -- (t11);

				\draw (t43) -- (t22);
				\draw (t43) -- (t32);
				\draw (t32) -- (t21);
				\draw (t32) -- (t11);

				\draw (t42) -- (t11);
				\draw (t42) -- (t21);
				\draw (t42) -- (t31);
			\end{scope}
		\end{tikzpicture}
		\caption{Visualization of sets of parameters for $N=5$. Sets connected by solid lines must have the same size. Sets connected by red dashed lines must not share any elements. }
		\label{f:sets-spec-example}
	\end{figure}
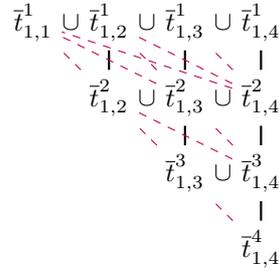

	As a result, the only possible configuration is $\ol{t}^j_{1,k} = \ol{w}_{I_{k+1}}$ for each set.
	Indeed, this can be seen by working inductively from the right-hand side of the diagram. 

	\begin{center}
		\begin{tikzpicture}
			\foreach \y in {1,2,3}
			{
				\foreach \x in {\y,...,3}
				{
					\node [anchor=center] at (\x,6-\y) {$\ol{t}^\y_{1,\x}$};
					\node [anchor=center]at (\x+0.5,6-\y) {$\cup$};
				}
				\node[anchor=center] at (4,6-\y) {$\ol{t}^{\y}_{1,4}$};
			}
			\node [anchor=center]at (4,2) {$\ol{t}^{4}_{1,4}$};

			\tikzset{shift={(5,1)}}
			\node at (0,3) {$=$};

			\foreach \y in {2,3,4}
			{
				\foreach \x in {\y,...,4}
				{
					\node [anchor=center] at (\x-1,6-\y) {$\ol{w}_{I_{\x}}$};
					\node [anchor=center]at (\x-1+0.5,6-\y) {$\cup$};
				}
				\node[anchor=center] at (4,6-\y) {$\ol{w}_{I_{5}}$};
			}
			\node [anchor=center]at (4,1) {$\ol{w}_{I_5}$};
		\end{tikzpicture}
	\end{center}
	
	We now give the details of the effect of this specialization. 
	Substituting in the parameters, we see that the Izergin-Korepin determinant specializes as in \eqref{factorizationIZrl}:
	\begin{multline}
		\Psi
		=
		\prod_{k=2}^{N-1}\prod_{k \leq j' < j \leq N-1} \left(\ol{w}_{I_{j+1}}-\ol{w}_{I_{j'+1}}\right)
			\prod_{k=1}^{N-1} \prod_{k \leq j < j' \leq N-1}
			\frac{q\ol{w}_{I_{j'+1}}-q^{-1}\ol{w}_{I_{j+1}}}{\ol{w}_{I_{j'+1}}-\ol{w}_{I_{j+1}}}
			\\
			\times
			\prod_{k=2}^{N-1} \left(\prod_{k\leq j < j'\leq N-1} 
			\left(q\ol{w}_{I_{j+1}}-q^{-1}\ol{w}_{I_{j'+1}}\right)
			\prod_{j=k}^{N-1} \underbrace{K^{(l)}(\ol{w}_{I_{j+1}}|\ol{w}_{I_{j+1}})}_{(q\ol{w}_{I_{j+1}}-q^{-1}\ol{w}_{I_{j+1}})}\right)
			\\
		\times 
		\tilde{T}_{1N}(\ol{w}_{I_N}|\ol{w})
		\cdots 
		\tilde{T}_{12}(\ol{w}_{I_2}|\ol{w})
		e_{1^n}.
	\end{multline}

	% \begin{multline}
	% 	\widetilde{\Psi} = \prod_{k=2}^{N-1} \prod_{k-1\leq j' < j \leq N-1} \left(\ol{w}_{I_{j+1}}-\ol{w}_{I_{j'+1}}\right)
	% 	\prod_{k=1}^{N-1} \prod_{k\leq j < j' \leq N-1} \left(\frac{q \ol{w}_{I_{j'+1}}-q^{-1}\ol{w}_{I_{j+1}}}{\ol{w}_{I_{j'+1}}-\ol{w}_{I_{j+1}}}\right)
	% 	\\
	% 	\times
	% 	\prod_{k=2}^{N-1} \prod_{k\leq j < j' \leq N-1} \left(q \ol{w}_{I_{j+1}}-q^{-1}\ol{w}_{I_{j'+1}}\right)
	% 	\prod_{j=2}^{N-1} \left(q \ol{w}_{I_{j+1}}-q^{-1} \ol{w}_{I_{j+1}}\right)^{N-2}
	% 	\\
	% 	\times 
	% 	T_{1N}^\mathrm{poly}(\ol{w}_{I_N}|\ol{w})
	% 	\cdots 
	% 	T_{12}^\mathrm{poly}(\ol{w}_{I_2}|\ol{w})
	% 	\ket{vac}
	% \end{multline}
	
	We also note the following cancellation:
	\begin{multline}
		\Psi = \cancel{\prod_{k=2}^{N-1} \prod_{k-1\leq j' < j \leq N-1} \left(\ol{w}_{I_{j+1}}-\ol{w}_{I_{j'+1}}\right)}
		\prod_{j=1}^{N-2} \prod_{j'=j+1}^{N-1} \left(\frac{q \ol{w}_{I_{j'+1}}-q^{-1}\ol{w}_{I_{j+1}}}{\cancel{\ol{w}_{I_{j'+1}}-\ol{w}_{I_{j+1}}}}\right)^j
		\\
		\times
		\prod_{j=1}^{N-2} \prod_{j'=j+1}^{N-1} \left(q \ol{w}_{I_{j+1}}-q^{-1}\ol{w}_{I_{j'+1}}\right)^{j-1}
		\prod_{j=2}^{N-1} \left(q \ol{w}_{I_{j+1}}-q^{-1} \ol{w}_{I_{j+1}}\right)^{j-1}
		\\
		\times 
		\tilde{T}_{1N}(\ol{w}_{I_N}|\ol{w})
		\cdots 
		\tilde{T}_{12}(\ol{w}_{I_2}|\ol{w})
		e_{1^n}.
	\end{multline}

As a result,  we have the following final expression.

\begin{proposition}
Specializing the sets of variables to 
	\begin{align*}
		\ol{t}^j &= \ol{w}_{I_{j+1}} \cup \ol{w}_{I_{j+2}} \cup \cdots \cup \ol{w}_{I_N},
		\ \ \ j=1,\dots,N-1,
	\end{align*}
we have
	\begin{multline} \label{psi-expr1-final}
		\Psi = 
		\prod_{1 \le j < k \le N-1} \left(q \ol{w}_{I_{k+1}}-q^{-1}\ol{w}_{I_{j+1}}\right)^j
		\left(q \ol{w}_{I_{j+1}}-q^{-1}\ol{w}_{I_{k+1}}\right)^{j-1}
		\prod_{j=2}^{N-1} \left(q \ol{w}_{I_{j+1}}-q^{-1} \ol{w}_{I_{j+1}}\right)^{j-1}
		\\
		\times 
		\tilde{T}_{1N}(\ol{w}_{I_N}|\ol{w})
		\cdots 
		\tilde{T}_{12}(\ol{w}_{I_2}|\ol{w})
		e_{1^n}.
	\end{multline}

	\end{proposition}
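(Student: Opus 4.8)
The plan is to start from the first universal Bethe vector \eqref{firstuniversalBethe}, pass to the tensor product of vector representations by taking $\tilde{T}^{\mathrm{rat}}(u)=\tilde{T}^{\mathrm{rat}}(u;\ol{w})=\tilde{R}^{\mathrm{rat}}_{0n}(u,w_n)\cdots\tilde{R}^{\mathrm{rat}}_{01}(u,w_1)$, multiply by the overall factor $(\ol{t}^1-\ol{w})\prod_{\ell=2}^{N-1}(\ol{t}^{\ell}-\ol{t}^{\ell-1})$, and evaluate on the highest weight vector $e_{1^n}$. The resulting sum over partitions of the $\ol{t}^k$ into the subsets $\ol{t}^k_{i,j}$ is then collapsed in two reduction steps: first a highest-weight reduction followed by passage to the polynomial normalization, and then the specialization $\ol{t}^j=\ol{w}_{I_{j+1}}\cup\cdots\cup\ol{w}_{I_N}$.

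For the first step I would use the highest-weight properties $\tilde{T}^{\mathrm{rat}}_{kk}(u;\ol{w})e_{1^n}=e_{1^n}$ and $\tilde{T}^{\mathrm{rat}}_{kj}(u;\ol{w})e_{1^n}=0$ for $j>k\ge 2$. Since the right-most operator block of \eqref{firstuniversalBethe} acts first, the off-diagonal operators $\tilde{T}^{\mathrm{rat}}_{kj}$ with $j>k\ge 2$ contribute nothing unless their arguments $\ol{t}^k_{k,j-1}$ are empty; together with the equicardinality constraint $|\ol{t}^k_{i,j}|=|\ol{t}^{k'}_{i,j}|$ this forces $\ol{t}^k_{i,j}=\emptyset$ for all $i\ge 2$, so that only the subsets $\ol{t}^{\ell}_{1,j}$ with $\ell\le j\le N-1$ survive. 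I would then pass to the polynomial normalization using $\tilde{T}^{\mathrm{rat}}_{1j}(\ol{t};\ol{w})=(\ol{t}-\ol{w})^{-1}\tilde{T}_{1j}(\ol{t}|\ol{w})$ and $K^{(l),\mathrm{rat}}(\ol{u}|\ol{v})=(\ol{u}-\ol{v})^{-1}K^{(l)}(\ol{u}|\ol{v})$, distribute the overall factor as $(\ol{t}^1-\ol{w})\prod_{\ell=2}^{N-1}(\ol{t}^{\ell}-\ol{t}^{\ell-1})=\prod_{j=1}^{N-1}(\ol{t}^1_{1,j}-\ol{w})\prod_{\ell=2}^{N-1}\prod_{j=\ell}^{N-1}\prod_{k=\ell-1}^{N-1}(\ol{t}^{\ell}_{1,j}-\ol{t}^{\ell-1}_{1,k})$, and verify the cancellations it produces: the factors $\ol{t}^1_{1,j}-\ol{w}$ kill the denominators of the $\tilde{T}_{1j}$, the diagonal ($k=j$) factors kill the Izergin--Korepin denominators, and the factors with $k>j$ kill the denominators of the mixed ratios $f(\ol{t}^k_{1,j},\ol{t}^{k-1}_{1,j'})$ with $j<j'$. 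What remains is a manifestly polynomial summand whose only $q$-free difference factors are the $\ol{t}^k_{1,j}-\ol{t}^{k-1}_{1,j'}$ with $j'<j$.

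For the second step I would impose $\ol{t}^j=\ol{w}_{I_{j+1}}\cup\cdots\cup\ol{w}_{I_N}$. The structural point is that those remaining $q$-free difference factors force each $\ol{t}^k_{1,j}$ to be disjoint from every $\ol{t}^{k-1}_{1,j'}$ sitting strictly to its left in the row above; combined with the equicardinalities and with the fact that the top row contributes only $\ol{t}^{N-1}_{1,N-1}=\ol{w}_{I_N}$, a downward induction on the rows shows that the unique surviving partition is the staircase $\ol{t}^{\ell}_{1,j}=\ol{w}_{I_{j+1}}$. On this configuration each Izergin--Korepin determinant collapses through \eqref{factorizationIZrl} to $K^{(l)}(\ol{w}_{I_{j+1}}|\ol{w}_{I_{j+1}})=q\ol{w}_{I_{j+1}}-q^{-1}\ol{w}_{I_{j+1}}$, the operator block becomes $\tilde{T}_{1N}(\ol{w}_{I_N}|\ol{w})\cdots\tilde{T}_{12}(\ol{w}_{I_2}|\ol{w})$, and the remaining rational prefactors are evaluated at $\ol{t}^{\ell}_{1,j}=\ol{w}_{I_{j+1}}$. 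A final cancellation between the numerator difference factors and the $f$-denominators then leaves exactly the prefactor appearing in \eqref{psi-expr1-final}, which proves the claim.

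The only genuine difficulty is bookkeeping: one must track the index ranges in the many nested products through the change of normalization, the distribution of the overall factor, and the successive cancellations, so that after imposing the staircase no rational factor survives and the exponents in $\prod_{1\le j<k\le N-1}(q\ol{w}_{I_{k+1}}-q^{-1}\ol{w}_{I_{j+1}})^j(q\ol{w}_{I_{j+1}}-q^{-1}\ol{w}_{I_{k+1}})^{j-1}\prod_{j=2}^{N-1}(q\ol{w}_{I_{j+1}}-q^{-1}\ol{w}_{I_{j+1}})^{j-1}$ come out correctly; each conceptual input (the highest-weight vanishing, the forcing of the staircase partition, and \eqref{factorizationIZrl}) is short.
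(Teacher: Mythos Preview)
Your proposal is correct and follows essentially the same route as the paper's own proof in Section~4.1: the highest-weight reduction to the subsets $\ol{t}^{\ell}_{1,j}$, the passage to the polynomial normalization with the distributed overall factor and its cancellations, the forcing of the staircase partition $\ol{t}^{\ell}_{1,j}=\ol{w}_{I_{j+1}}$ under the specialization via the surviving $q$-free difference factors, the collapse of the Izergin--Korepin determinants through \eqref{factorizationIZrl}, and the final cancellation yielding the stated prefactor are exactly the steps carried out there. The only cosmetic discrepancy is your orientation (you call the single-element row $\ol{t}^{N-1}_{1,N-1}$ the ``top row'' and induct ``downward,'' whereas in the paper's figures that row sits at the bottom and the induction runs from the right), but the underlying argument is identical.
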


\subsection{Second specialization} \label{sec:secondspec}

%We have another expression for the Bethe vector from (3.11) of \cite{pakuliakBetheVectorsQuantum2014},
%\begin{multline*}
%	\widehat{B}^n(\ol{t_n})=\sum_{\mathrm{part}}
%	\prod_{k=1}^{N-1} \prod_{i,j \prec i',j'}
%	f(\ol{t^k_{i',j'}},\ol{t^k_{i,j}})
%	\prod_{k=2}^{N-1} \left(\prod_{i,j \prec^t i',j'} f(\ol{t^k_{i,j}},\ol{t^{k-1}_{i',j'}})
%	\prod_{i<j} K^{(r)}(\ol{t^k_{i,j}}|\ol{t^{k-1}_{i,j}})\right)
%	\\
%	\times
%	\prod^{\longleftarrow}_{N-1 \geq k \geq 1} 
%	\left(
%		\prod^{\longrightarrow}_{1 \leq j \leq k} T_{j,k+1}(\ol{t^k_{j,k}})
%	\right)
%	\prod_{k=1}^{N-2}\prod_{k,k\prec^t i,j} T_{k+1,k+1}(\ol{t^k_{i,j}}).
%\end{multline*}
%Here, we have a different ordering of indices,
%\[
%	i,j \prec^t i',j' \iff j < j' \text{ or } j=j', i<i'.
%\]
We now proceed with the same steps as the previous subsection, but with another expression.
We take the same vector representation of the universal Bethe vector $\widehat{B}(\bar{t}^{\,1},\bar{t}^{\,2},\dots,\bar{t}^{\,N-1})$, multiply by the same overall factor $\prod_{\ell=2}^{N-1} \left(\ol{t}^\ell-\ol{t}^{\ell-1}\right)$
and act on the highest weight vector
\begin{align}
	\widetilde{\Psi} &:= \left(\ol{t}^1-\ol{w}\right)
	\prod_{\ell=2}^{N-1} \left(\ol{t}^\ell-\ol{t}^{\ell-1}\right) 
	\widehat{B}(\bar{t}^{\,1},\bar{t}^{\,2},\dots,\bar{t}^{\,N-1}) e_{1^n} \nonumber \\
	&= \left(\ol{t}^1-\ol{w}\right)
	\prod_{\ell=2}^{N-1} \left(\ol{t}^\ell-\ol{t}^{\ell-1}\right) 
	\sum_{\mathrm{part}}
	\prod_{k=1}^{N-1} \prod_{i,j \prec i',j'}
	\frac{q \ol{t}^k_{i',j'}-q^{-1} \ol{t}^k_{i,j}}{\ol{t}^k_{i',j'}-\ol{t}^k_{i,j}} \nonumber \\
	&\times
	\prod_{k=2}^{N-1} \left(\prod_{i,j \prec^t i',j'} \frac{q\ol{t}^k_{i,j}-q^{-1} \ol{t}^{k-1}_{i',j'}}{\ol{t}^k_{i,j}-\ol{t}^{k-1}_{i',j'}}
	\prod_{i<j} K^{(r),\mathrm{rat}}(\ol{t}^k_{i,j}|\ol{t}^{k-1}_{i,j})\right)
	\nonumber
	\\
	&\times
	\prod^{\longleftarrow}_{N-1 \geq k \geq 1} 
	\left(
		\prod^{\longrightarrow}_{1 \leq j \leq k} \tilde{T}^{\mathrm{rat}}_{j,k+1}(\ol{t}^k_{j,k};\overline{w})
	\right)
	\prod_{k=1}^{N-2}\prod_{k,k\prec^t i,j} \tilde{T}^{\mathrm{rat}}_{k+1,k+1}(\ol{t}^k_{i,j};\overline{w})
	e_{1^n}.
\end{align}
Observe that, for the vector representation,
\[
	\prod_{k=1}^{N-2}\prod_{k,k\prec^t i,j} \tilde{T}^{\mathrm{rat}}_{k+1,k+1}(\ol{t}^k_{i,j};\overline{w})
	 e_{1^n} = e_{1^n}.
\]
Using this action and
rewriting using the polynomial version of the domain wall boundary partition functions
and the monodromy matrices, we have
\begin{align}
	\widetilde{\Psi} &= 
	\left(\ol{t}^1-\ol{w}\right)\prod_{\ell=2}^{N-1} \left(\ol{t}^\ell-\ol{t}^{\ell-1}\right)
	\sum_{\mathrm{part}}
		\prod_{k=1}^{N-1} \prod_{i,j \prec^t i',j'}
		\frac{q\ol{t}^k_{i',j'}-q^{-1}\ol{t}^k_{i,j}}{\ol{t}^k_{i',j'}-\ol{t}^k_{i,j}}
		\nonumber
		\\
		&\times
		\prod_{k=2}^{N-1} \left(\prod_{i,j \prec^t i',j'} \frac{q\ol{t}^k_{i,j}-q^{-1}\ol{t}^{k-1}_{i',j'}}{\ol{t}^k_{i,j}-\ol{t}^{k-1}_{i',j'}}
		\prod_{i<j} \frac{K^{(r)}(\ol{t}^k_{i,j}|\ol{t}^{k-1}_{i,j})}{\ol{t}^k_{i,j}-\ol{t}^{k-1}_{i,j}}\right)
		\prod^{\longleftarrow}_{N-1 \geq k \geq 1} 
		\left(
			\prod^{\longrightarrow}_{1 \leq j \leq k} \frac{\tilde{T}_{j,k+1}(\ol{t}^k_{j,k};\ol{w})	}{\ol{t}^k_{j,k}-\ol{w}}	\right)
		e_{1^n}.
\end{align}
After a small cancellation, we get
\begin{align}
	\widetilde{\Psi} &= 
	\prod_{\ell=2}^{N-1} \left(\ol{t}^\ell-\ol{t}^{\ell-1}\right)
	\sum_{\mathrm{part}}
		\frac{\prod_{j=2}^{N-1}\left(\ol{t}^1_{1j}-\ol{w}\right)}{\prod_{N-1\geq k \geq 1} \prod_{2 \leq j \leq k}
			\left(\ol{t}^k_{j,k}-\ol{w}\right)}
		\prod_{k=1}^{N-1} \prod_{i,j \prec^t i',j'}
		\frac{q\ol{t}^k_{i',j'}-q^{-1}\ol{t}^k_{i,j}}{\ol{t}^k_{i',j'}-\ol{t}^k_{i,j}}
		\nonumber
		\\
&		\times
		\prod_{k=2}^{N-1} \left(\prod_{i,j \prec^t i',j'} \frac{q\ol{t}^k_{i,j}-q^{-1}\ol{t}^{k-1}_{i',j'}}{\ol{t}^k_{i,j}-\ol{t}^{k-1}_{i',j'}}
		\prod_{i<j} \frac{K^{(r)}(\ol{t}^k_{i,j}|\ol{t}^{k-1}_{i,j})}{\ol{t}^k_{i,j}-\ol{t}^{k-1}_{i,j}}\right)
		\prod^{\longleftarrow}_{N-1 \geq k \geq 1} 
		\left(
			\prod^{\longrightarrow}_{1 \leq j \leq k} \tilde{T}_{j,k+1}(\ol{t}^k_{j,k};\ol{w})
		\right)
		e_{1^n}.
\end{align}

The first step is to specialize to $\ol{t^1} = \ol{w}_{I_2} \cup \cdots \cup \ol{w}_{I_N}$.
Observe that the factor $\left(\ol{t}^1_{1j}-\ol{w}\right)$ will be zero unless the partition satisfies
\[
	\ol{t}^1_{1,1} = \ol{w}_{I_2} \cup \cdots \cup \ol{w}_{I_N};
	\qquad 
	\ol{t}^1_{1,2} = \dots = \ol{t}^1_{1,N-1} = \varnothing.
\]
Then, from the rule that vertically aligned partitions have the same size, we obtain 
\[
	\ol{t}^j_{1,k} = \varnothing \qquad \forall j\geq 1,k>1.
\]

We have factor: 
\[
	\frac{\left(\ol{t}^2-\ol{t}^{1}\right)}{\left(\ol{t}^2_{22}-\ol{w}\right)}
	=
	\frac{\prod_{j=2}^{N-1} \left(\ol{t}^2_{2,j}-\ol{t}^{1}_{11}\right)}{\left(\ol{t}^2_{22}-\ol{w}\right)}
	=
	\frac{\prod_{j=2}^{N-1} \left(\ol{t}^2_{2,j}-\ol{w}_{I_2} \cup \cdots \cup \ol{w}_{I_N}\right)}{\left(\ol{t}^2_{22}-\ol{w}\right)}
	=
	\frac{\prod_{j=3}^{N-1} \left(\ol{t}^2_{2,j}-\ol{w}_{I_2} \cup \cdots \cup \ol{w}_{I_N}\right)}{\left(\ol{t}^2_{22}-\ol{w}_{I_1}\right)}.
\]
We now specialize to $\ol{t}^2 = \ol{w}_{I_3} \cup \cdots \cup \ol{w}_{I_N}$. 
There will be a zero factor in the above product unless 
\[
	t^2_{22} = \ol{w}_{I_2} \cup \cdots \cup \ol{w}_{I_N};
	\qquad 
	t^2_{2,3} = \dots = t^2_{2,N-1} = \varnothing.
\]
As before, this spreads vertically up the partition. 
Inductively, we apply the same argument. 
In each case we specialize to 
\[
	\ol{t}^j = \ol{w}_{I_{j+1}} \cup \cdots \cup \ol{w}_{I_N},
\]
and we find that all summands vanish except the one corresponding to
\[
	\ol{t}^j_{jj} = \ol{w}_{I_{j+1}} \cup \cdots \cup \ol{w}_{I_N};
	\qquad
	\ol{t}^j_{k \ell} = \varnothing, \  j\leq k < \ell \leq N.
\]

With this specialization, we now analyse its effect on the expression for $\widetilde{\Psi}$. 
We have a product
\[
	\frac{\left(\ol{t}^1-\ol{w}\right)\prod_{\ell=2}^{N-1}\left(\ol{t}^\ell-\ol{t}^{\ell-1}\right)}{\prod_{1 \leq j \leq N-1} \left(\ol{t}^j_{jj}-\ol{w}\right)}
	=
	\cancel{\frac{\ol{t}^1_{11}-\ol{w}}{\ol{t}^1_{11}-\ol{w}}}
	\prod_{j=2}^{N-1}
	\frac{\left(\ol{t}^j_{jj}-\ol{t}^{j-1}_{j-1,j-1}\right)}{\left(\ol{t}^j_{jj}-\ol{w}\right)}.
\]
Then, as a product over subsets of $\ol{w}$, this is equal to 
\[
	\prod_{j=2}^{N-1}
	\frac{1}{\left(\ol{w}_{I_{j+1}}\cup \cdots \cup \ol{w}_{I_{N}} -\ol{w}_{I_{1}}\cup \cdots \cup \ol{w}_{I_{j-1}}\right)}.
\]
All other factors disappear, and
we then are left with the following expression.

\begin{proposition}
Specializing the sets of variables to 
	\begin{align*}
		\ol{t}^j &= \ol{w}_{I_{j+1}} \cup \ol{w}_{I_{j+2}} \cup \cdots \cup \ol{w}_{I_N},
		\ \ \ j=1,\dots,N-1,
	\end{align*}
we have
\begin{multline} \label{psi-expr2-final}
	\widetilde{\Psi}
	=
	\prod_{j=2}^{N-1}
	\frac{1}{\left(\ol{w}_{I_{j+1}}\cup \cdots \cup \ol{w}_{I_{N}} -\ol{w}_{I_{1}}\cup \cdots \cup \ol{w}_{I_{j-1}}\right)}
	\\
	\times
	\tilde{T}_{N-1,N}(\ol{w}_{I_N};\ol{w})
	\tilde{T}_{N-2,N-1}(\ol{w}_{I_{N-1}}\cup\ol{w}_{I_N};\ol{w})
	\cdots
	\tilde{T}_{1,2}(\ol{w}_{I_{2}}\cup \cdots \cup\ol{w}_{I_N};\ol{w}) e_{1^n}.
\end{multline}

\end{proposition}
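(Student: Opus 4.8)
The plan is to run, for $\widetilde{\Psi}$, the same specialization procedure that was used for $\Psi$ in the previous subsection, but now starting from the second presentation $\widehat{B}$ in \eqref{seconduniversalBethe} rather than from $B$ in \eqref{firstuniversalBethe}. First I would substitute the vector representation $\tilde{T}^{\mathrm{rat}}(u)=\tilde{T}^{\mathrm{rat}}(u;\ol{w})$ into \eqref{seconduniversalBethe}, multiply by the overall factor $(\ol{t}^1-\ol{w})\prod_{\ell=2}^{N-1}(\ol{t}^{\ell}-\ol{t}^{\ell-1})$, and act on the highest weight vector $e_{1^n}$. The one structural simplification available before any specialization is that the diagonal operators act trivially, $\tilde{T}^{\mathrm{rat}}_{k+1,k+1}(u;\ol{w})\,e_{1^n}=e_{1^n}$ for $1\le k\le N-2$, so the last ordered product over $\tilde{T}^{\mathrm{rat}}_{k+1,k+1}$ in \eqref{seconduniversalBethe} drops out entirely. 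I would then convert every rational Izergin--Korepin determinant $K^{(r),\mathrm{rat}}$ and every rational monodromy entry into its polynomial counterpart $K^{(r)}$ and $\tilde{T}_{j,k+1}$, each conversion producing or absorbing a factor of the form $\prod(u_i-v_j)$; after collecting these, one is left with a sum over partitions of the sets $\ol{t}^k$ whose prefactor contains, crucially, the ratio of $(\ol{t}^1-\ol{w})\prod_{\ell=2}^{N-1}(\ol{t}^{\ell}-\ol{t}^{\ell-1})$ to $\prod_{1\le j\le N-1}(\ol{t}^j_{jj}-\ol{w})$, together with products of $f$-factors and off-diagonal determinants $K^{(r)}(\ol{t}^k_{i,j}|\ol{t}^{k-1}_{i,j})$ with $i<j$.

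The heart of the argument is imposing the specialization $\ol{t}^j=\ol{w}_{I_{j+1}}\cup\cdots\cup\ol{w}_{I_N}$, which I would do one row at a time, starting from $j=1$ and working upward. At step $j$ the numerator factor built from $\ol{t}^j_{jj}$ against the subsets above and to its left, together with the denominator factor $\ol{t}^j_{jj}-\ol{w}$, forces every surviving partition to satisfy $\ol{t}^j_{jj}=\ol{w}_{I_{j+1}}\cup\cdots\cup\ol{w}_{I_N}$ and $\ol{t}^j_{k,\ell}=\emptyset$ for $j\le k<\ell\le N$; the rule that vertically aligned subsets have equal cardinality then propagates this emptiness up each column, so that after all $N-1$ rows exactly one term of the partition sum survives. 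In that term every off-diagonal subset $\ol{t}^k_{i,j}$ with $i<j$ is empty, whence each $K^{(r)}(\ol{t}^k_{i,j}|\ol{t}^{k-1}_{i,j})$ equals $1$ and each $f$-factor (all of which involve an empty argument) equals $1$; the prefactor ratio, read as a ratio of products over disjoint subsets of $\ol{w}$ after cancelling matching factors of the form $(A-A)$, collapses to $\prod_{j=2}^{N-1}\bigl(\ol{w}_{I_{j+1}}\cup\cdots\cup\ol{w}_{I_N}-\ol{w}_{I_1}\cup\cdots\cup\ol{w}_{I_{j-1}}\bigr)^{-1}$; and the surviving operator string, read off the ordered products in \eqref{seconduniversalBethe} under the $\prec^t$-ordering, is exactly $\tilde{T}_{N-1,N}(\ol{w}_{I_N};\ol{w})\,\tilde{T}_{N-2,N-1}(\ol{w}_{I_{N-1}}\cup\ol{w}_{I_N};\ol{w})\cdots\tilde{T}_{1,2}(\ol{w}_{I_2}\cup\cdots\cup\ol{w}_{I_N};\ol{w})$. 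Putting these together gives \eqref{psi-expr2-final}.

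The step I expect to be the main obstacle is the combinatorial bookkeeping of the specialization: pinning down precisely which partitions are annihilated at each row, and handling the $0/0$ character of the prefactor, where ratios such as that of $\ol{t}^j_{jj}-\ol{t}^{j-1}_{j-1,j-1}$ to $\ol{t}^j_{jj}-\ol{w}$ become genuinely indeterminate once the set identities are imposed and must be resolved by cancelling common factors before the substitution. One also has to check that the $\prec^t$-convention in \eqref{seconduniversalBethe} reproduces exactly the claimed left-to-right order of the operators $\tilde{T}_{j,j+1}$, and to note that---in contrast with the first specialization \eqref{psi-expr1-final}---no factors of the form $q\ol{w}_{I_{j+1}}-q^{-1}\ol{w}_{I_{j+1}}$ arise here, precisely because in this presentation the surviving Izergin--Korepin determinants all have empty argument rather than being of the form $K^{(l)}(\ol{w}_{I_{j+1}}|\ol{w}_{I_{j+1}})$ evaluated via \eqref{factorizationIZrl}. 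A convenient sanity check at the end is the case $N=3$, for which \eqref{psi-expr2-final} reads $\widetilde{\Psi}=(\ol{w}_{I_3}-\ol{w}_{I_1})^{-1}\,\tilde{T}_{2,3}(\ol{w}_{I_3};\ol{w})\,\tilde{T}_{1,2}(\ol{w}_{I_2}\cup\ol{w}_{I_3};\ol{w})\,e_{1^n}$ and can be confirmed by a direct computation with the $\mathfrak{gl}_3$ Bethe vector.
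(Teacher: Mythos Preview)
Your proposal is correct and follows essentially the same approach as the paper's own argument: act on $e_{1^n}$ to kill the diagonal operators, convert to polynomial normalization, specialize the rows $\ol{t}^j$ one at a time so that numerator zeros leave only the single partition with $\ol{t}^j_{jj}=\ol{w}_{I_{j+1}}\cup\cdots\cup\ol{w}_{I_N}$ and all other subsets empty, and then read off the surviving prefactor and operator string. One small wording slip: at step $j$ the factor that annihilates the unwanted partitions is not built from $\ol{t}^j_{jj}$ but from the sets $\ol{t}^j_{j,k}$ with $k>j$ (specifically, $\prod_{k>j}(\ol{t}^j_{j,k}-\ol{t}^{j-1})$ after the previous steps), which forces those sets to be empty and hence $\ol{t}^j_{jj}=\ol{t}^j$; the paper makes this explicit by displaying the intermediate ratio $(\ol{t}^j-\ol{t}^{j-1})/(\ol{t}^j_{jj}-\ol{w})$ at each step.
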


%Therefore, combining \eqref{} with \eqref{psi-expr2-final}, we have the relation
%\begin{multline}
%	T^\mathrm{poly}_{N-1,N}(\ol{w}_{I_N}|\ol{w})
%	T^\mathrm{poly}_{N-2,N-1}(\ol{w}_{I_{N-1}}\cup\ol{w}_{I_N}|\ol{w})
%	\cdots
%	T^\mathrm{poly}_{1,2}(\ol{w}_{I_{2}}\cup \cdots \cup\ol{w}_{I_N}|\ol{w})\ket{vac}
%	\\
%	=
%	\prod_{j=2}^{N-1}
%	\left(\ol{w}_{I_{j+1}}\cup \cdots \cup \ol{w}_{I_{N}} -\ol{w}_{I_{1}}\cup \cdots \cup \ol{w}_{I_{j-1}}\right)
%	\\
%	\times 
%	\prod_{j=1}^{N-2} \prod_{j'=j+1}^{N-1} \left(q \ol{w}_{I_{j'+1}}-q^{-1}\ol{w}_{I_{j+1}}\right)^j
%	\left(q \ol{w}_{I_{j+1}}-q^{-1}\ol{w}_{I_{j'+1}}\right)^{j-1}
%	\\
%	\times 
%	\prod_{j=2}^{N-1} \left(q \ol{w}_{I_{j+1}}-q^{-1} \ol{w}_{I_{j+1}}\right)^{j-1}
%	\\
%	\times 
%	T_{1N}^\mathrm{poly}(\ol{w}_{I_N}|\ol{w})
%	\cdots 
%	T_{12}^\mathrm{poly}(\ol{w}_{I_2}|\ol{w})
%	\ket{vac}.
%\end{multline}

%\subsection{Relation to the }

\section{Yangian case}
We present analogous results obtained in previous sections to the case of the Yangian.
The Yangian $Y(\mathfrak{gl}_N)$ is a unital associative algebra generated by the coefficients of the formal series
\[
T(x)=\sum_{i,j=1}^N E_{ij}\otimes T_{ij}(x),\qquad
T_{ij}(x)=\sum_{r\ge0}T_{ij}[r] x^{-r},
\]
subject to the $RTT$ relation
\begin{equation}\label{eq:RTT-Yangian}
R(x,y)\, T_1(x) T_2(y) = T_2(y) T_1(x)\, R(x,y),
\end{equation}
where $T_1(x)=T(x)\otimes 1$, $T_2(y)=1\otimes T(y)$, and
\begin{align}
R(x,y)=(x-y+h)\sum_{i=1}^N E_{ii}\otimes E_{ii}
+(x-y)\sum_{i\neq j}E_{ii}\otimes E_{jj}
+h \sum_{i\neq j}E_{ij}\otimes E_{ji},
\label{rationalRmatrix}
\end{align}
is the rational $R$-matrix (Figure \ref{rationalrmatrixfigure}).  

\begin{figure}
\begin{center}
	\includegraphics{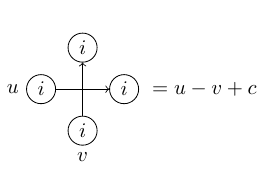}
	\includegraphics{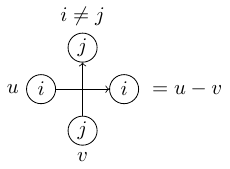}
	\includegraphics{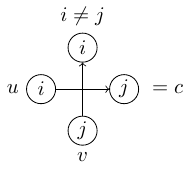}
\end{center} 
\caption{Non-zero matrix elements of  the rational $R$-matrix.} \label{rationalrmatrixfigure}
\end{figure}

The rational $R$-matrix satisfies the Yang-Baxter relation
\begin{align}
R_{12}(x,y)\,R_{13}(x,z)\,R_{23}(y,z)
&=R_{23}(y,z)\,R_{13}(x,z)\,R_{12}(x,y), \label{eq:YB-rational} 
\end{align}
and the unitarity relation becomes
\begin{align}
R_{12}(x,y)\,R_{21}(y,x)&=(x-y+h)(y-x+h)\,\mathrm{I} \otimes \mathrm{I}. \label{eq:unitarity-rational}
\end{align}

The natural representation of $T(x)$ is given by the product of rational $R$-matrices
\[
T^{\mathrm{vect}}(x;\overline{\xi})=
R_{0n}(x,\xi_n)\cdots R_{01}(x,\xi_1),
\]
where $R(x,y)$ is \eqref{rationalRmatrix}, $V_0$ is the auxiliary space, and
$V_1,\dots,V_n$ are quantum spaces with spectral parameters
$\xi_1,\dots,\xi_n$.
By repeated use of the Yang--Baxter equation, one checks that
$T^{\mathrm{vect}}(x;\overline{\xi})$ satisfies the $RTT$ relation
\eqref{eq:RTT-Yangian}.

We study the multiple version of the following fundamental
commutation relations
\begin{align}
T_{ik}(x)T_{ij}(y)&=\frac{y-x+h}{y-x}T_{ij}(y)T_{ik}(x)-\frac{h}{y-x}T_{ij}(x)T_{ik}(y), \ \ \  j \neq k, \label{rationalfundcommone} \\
[T_{ij}(x),T_{ij}(y)]&=0, \label{rationalfundcommtwo}
\end{align}
for $i,j,k=1,\dots,N$.

\color{red}

%Define the polynomial $R$-matrix as follows:

%Recall the formal $Y(\mathfrak{gl_N})$ generating matrix $(T_{ij}(u))^N_{i,j=1}$. 
%A specialization of the algebraic relations gives, 
%\begin{equation}\label{basic-comm-rel}
%	T_{ki}(u) T_{ji}(v) = \frac{u-v+c}{u-v} T_{ji}(v) T_{ki}(u) - \frac{c}{u-v}T_{ji}(u) T_{ki}(v),
%\end{equation}
%so the generators in the $i^\text{th}$ column form a closed subalgebra.
%Note that $[T_{jk}(u),T_{jk}(v)]=0$, so $T_{jk}(\ol{u})$ is unambiguous. 

\color{black}

We present analogous results to the Yangian case,
replacing the $u$-, $v$- and $w$-variables for the special functions
and partition functions in the previous sections
to $x$-, $y$- and $z$-variables.
This replacement also implies that we replace the $R$-matrix 
which constructs the corresponding partition functions
from $R(u.v)$ or $\tilde{R}(u,v)$ to $R(x,y)$ \eqref{rationalRmatrix}.
We also do not use the symbol $ \tilde{} $ for the rational version as there is no disctinction
for the Yangian case.

We introduce the rational weight functions.
\begin{definition} The rational weight functions are defined as
\begin{align} \label{rational-wavefunction}
        &W (\overline{x}^1,\dots,\overline{x}^{N-1} | \overline{y} | \bm I ) 
        = \sum_{\sigma_1 \in S_{k_1}} \cdots \sum_{\sigma_{N-1} \in S_{k_{N-1}}} \nonumber \\
        &\prod_{p=1}^{N-2} 
\Bigg\{
            \prod_{a=1}^{k_p} \Bigg(
\prod_{i=1}^{\widetilde{I}_{a}^{(p)}-1} 
 \left(x^{(p)}_{\sigma_p(a)}- x^{(p+1)}_{\sigma_{p+1}(i)}   \right) \times h
                \times 
                \prod_{i=\widetilde{I}_{a}^{(p)}+1}^{ k_{p+1}}
                    \left(x^{(p)}_{\sigma_p(a)}-x^{(p+1)}_{\sigma_{p+1}(i)}
+h \right)
            \Bigg) \nonumber \\
&\times
            \prod_{a<b}^{k_p} 
                \frac{x^{(p)}_{\sigma_p(a)}-x^{(p)}_{\sigma_p(b)}-h}{x^{(p)}_{\sigma_p(a)}-x^{(p)}_{\sigma_p(b)}}
        \Bigg\}
\nonumber        \\ 
&\times 
        \prod_{a=1}^{k_{N-1}} \left(
            \prod_{i=1}^{I_a^{(N-1)}-1} 
                \left(x_{\sigma_{N-1}(a)}^{(N-1)}-y_i^{(N-1)}\right) \times h \times
            \prod_{i=I_a^{(N-1)}+1}^{L_{N-1}} 
                \left(x_{\sigma_{N-1}(a)}^{(N-1)}-y_i^{(N-1)}+h \right)
        \right)
     \nonumber   \\ 
&\times 
        \prod_{a<b}^{k_{N-1}} 
            \frac{x^{(N-1)}_{\sigma_{N-1}(a)}-x^{(N-1)}_{\sigma_{N-1}(b)}-h}{x^{(N-1)}_{\sigma_{N-1}(a)}-x^{(N-1)}_{\sigma_{N-1}(b)}}.
    \end{align}   
    \end{definition}

The correspondence with the partition functions is given by the following relation.

\begin{theorem} \label{offshellbetherationalweight}
The following holds:
\begin{align}
\psi(\overline{x}^1,\dots,\overline{x}^{N-1} | \overline{y} | \bm I ) =
W (\overline{x}^1,\dots,\overline{x}^{N-1} | \overline{y} | \bm I ).
\end{align}
\end{theorem}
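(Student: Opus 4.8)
The plan is to deduce Theorem~\ref{offshellbetherationalweight} from its trigonometric counterpart, Theorem~\ref{offshellbethetrigonometricweight}, by the standard rational degeneration, rather than by repeating the combinatorial arguments used for the trigonometric case in \cite{PRS,FKPR,TVsigma,FM,BW,GMS,KT}. First I would introduce the substitution $q=e^{\varepsilon h/2}$ together with $u\mapsto e^{\varepsilon x}$, $v\mapsto e^{\varepsilon y}$, $\xi_i\mapsto e^{\varepsilon \zeta_i}$ applied to every spectral variable, and observe that, entrywise,
\[
R(e^{\varepsilon x},e^{\varepsilon y})=\varepsilon\,R^{\mathrm{rat}}(x,y)+O(\varepsilon^2),
\]
where $R^{\mathrm{rat}}(x,y)$ is the rational $R$-matrix \eqref{rationalRmatrix}: indeed $qu-q^{-1}v\to\varepsilon(x-y+h)$, $u-v\to\varepsilon(x-y)$ and $(q-q^{-1})u\to\varepsilon h$ to leading order. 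Since the rational Yang--Baxter relation \eqref{eq:YB-rational} is the $\varepsilon\to 0$ limit of \eqref{eq:YBE-spectral}, all structural properties needed to make sense of $\psi$ and $W$ in \eqref{rational-wavefunction} are preserved in the limit.

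Next I would track the leading power of $\varepsilon$ on each side of the identity $\psi=W$ of Theorem~\ref{offshellbethetrigonometricweight}. The partition function $\psi(\overline u^1,\dots,\overline u^{N-1}\,|\,\overline v\,|\,\bm I)$ is a finite sum of products of $R$-matrix entries, one per vertex of the nested grid, and the total number of vertices is $V=\sum_{p=1}^{N-2}k_pk_{p+1}+k_{N-1}L$, independent of the configuration summed over; hence $\psi_{\mathrm{trig}}=\varepsilon^{V}\psi_{\mathrm{rat}}\bigl(1+o(1)\bigr)$, where $\psi_{\mathrm{rat}}$ is the same partition function built from \eqref{rationalRmatrix}. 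On the other side, in each summand of $W(\overline u^1,\dots,\overline u^{N-1}\,|\,\overline v\,|\,\bm I)$ the factors grouped with indices $p$ and $a$ contribute exactly $k_{p+1}$ linear factors for $p\le N-2$ (the $\widetilde I^{p}_a-1$ factors, the factor $(q-q^{-1})u^{p}_{\sigma_p(a)}$, and the remaining $k_{p+1}-\widetilde I^{p}_a$ factors), and those in the last layer contribute $L$ such factors per index $a$, each of order $\varepsilon$; meanwhile every ratio $\dfrac{q^{-1}u^{p}_{\sigma_p(a)}-qu^{p}_{\sigma_p(b)}}{u^{p}_{\sigma_p(a)}-u^{p}_{\sigma_p(b)}}$ is of order $\varepsilon^{0}$ because the simple zero at leading order cancels between numerator and denominator, tending precisely to $\dfrac{x^{p}_{\sigma_p(a)}-x^{p}_{\sigma_p(b)}-h}{x^{p}_{\sigma_p(a)}-x^{p}_{\sigma_p(b)}}$. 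Thus every permutation summand, and therefore $W_{\mathrm{trig}}$, equals $\varepsilon^{V}W_{\mathrm{rat}}\bigl(1+o(1)\bigr)$ with the same $V$, and the limit of each summand is exactly the corresponding summand of \eqref{rational-wavefunction}.

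Finally I would divide the trigonometric identity by $\varepsilon^{V}$ and let $\varepsilon\to0$: both sides are rational functions of the $\varepsilon$-deformed variables that, after this normalization, extend continuously to $\varepsilon=0$, and their limits are precisely $\psi(\overline x^1,\dots,\overline x^{N-1}\,|\,\overline y\,|\,\bm I)$ and $W(\overline x^1,\dots,\overline x^{N-1}\,|\,\overline y\,|\,\bm I)$, which gives the claim. The main obstacle, and essentially the only delicate point, is the bookkeeping in the second step: checking that the fixed vertex count $V$ of the nested partition function matches the number of $\varepsilon$-factors produced by \emph{every} permutation summand of $W$, and that no residual scalar survives from the normalization of the $R$-matrix. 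If one prefers a fully self-contained route, one may instead rerun the proof of Theorem~\ref{offshellbethetrigonometricweight} verbatim with \eqref{eq:YBE-spectral} replaced by \eqref{eq:YB-rational} and \eqref{trigRverone} by \eqref{rationalRmatrix}, since the symmetry-plus-recursion argument underlying it is insensitive to the rational/trigonometric distinction.
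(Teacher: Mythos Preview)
The paper does not prove Theorem~\ref{offshellbetherationalweight}; like its trigonometric counterpart Theorem~\ref{offshellbethetrigonometricweight}, it is simply stated as a known result, implicitly covered by the same references \cite{PRS,FKPR,TVsigma,FM,BW,GMS,KT}. Your degeneration argument therefore supplies more than the paper itself offers, and it is correct: the vertex count $V=\sum_{p=1}^{N-2}k_pk_{p+1}+k_{N-1}L$ matches the nested grid, each permutation summand of $W_{\mathrm{trig}}$ contributes exactly $V$ linear factors of order~$\varepsilon$ (your accounting of the three groups of factors per index $(p,a)$ and of the order-$\varepsilon^{0}$ ratios is accurate), and no spurious scalar survives since every nonzero entry of $R(e^{\varepsilon x},e^{\varepsilon y})$ has leading term exactly $\varepsilon$ times the corresponding entry of \eqref{rationalRmatrix}. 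It is worth noting that the paper itself uses precisely this degeneration technique in its proof of Proposition~\ref{showrelationproprational}, so your route is entirely in the spirit of the paper even though the paper does not carry it out for this particular statement. The alternative you mention at the end---rerunning the trigonometric argument with the rational $R$-matrix---is also valid and is effectively what the cited literature does.
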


The domain wall boundary partition functions
for the Yangian version
\begin{align}
&H(\overline{x}|\overline{y}) 
:=
e_{1^{|\overline{x}|}}^*
T_{21}(\overline{x};\overline{y}) 
e_{2^{|\overline{x}|}},
\end{align}
where $|\overline{x}|=|\overline{y}|$,
can be expressed by the following version of the
Izergin-Korepin determinant
\begin{align}
K(\overline{x}|\overline{y})
&:= 
		\frac{\prod_{1\leq i,j \leq n}(x_i-y_j+h)(x_i-y_j)}{\prod_{1\leq i <j \leq n}(x_i-x_j)(y_j-y_i)}
		\det_{1 \le i,j \le n} \left[\frac{h}{(x_i-y_j+h)(x_i-y_j)}\right] \nonumber \\
		&=
		\frac{1}{\prod_{1\leq i <j \leq n}(x_i-x_j)(y_j-y_i)}
		\det_{1 \le i,j \le n} \left[
		h
		\prod_{\substack{k=1 \\ k \neq j}}^n
		(x_i-y_k+h)(x_i-y_k)
		\right],
\end{align}
for $\overline{x}=\{x_1,x_2,\dots,x_n  \}$, $\overline{y}=\{y_1,y_2,\dots,y_n \}$.
\begin{theorem}
The following holds:
\begin{align}
H(\overline{x}|\overline{y}) = K(\overline{x}|\overline{y}).
\label{domainwallpartitionIKdetrational}
\end{align}
\end{theorem}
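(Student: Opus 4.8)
The plan is to prove \eqref{domainwallpartitionIKdetrational} by the classical Korepin--Izergin characterisation of the domain wall boundary partition function, now specialised to the rational $R$-matrix \eqref{rationalRmatrix}; this is the rational degeneration of the statement behind \eqref{domainwallpartitionIKdet}, and since $H(\ol x|\ol y)$ involves only the colours $1$ and $2$, the conservation of colours identifies it with the domain wall partition function of the rational six-vertex model built from the $\mathfrak{gl}_2$ block of \eqref{rationalRmatrix}. Concretely I would characterise both $H(\ol x|\ol y)$ and $K(\ol x|\ol y)$ by the same short list of properties -- separate symmetry in $\ol x$ and $\ol y$, a polynomial degree bound, a one-point recursion, and an initial value -- and then invoke uniqueness by polynomial interpolation and induction on $n=|\ol x|=|\ol y|$.

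First I would record the structural properties of $H(\ol x|\ol y)$. By the Yang--Baxter relation \eqref{eq:YB-rational} it is separately symmetric in the variables of $\ol x$ and of $\ol y$. Every vertex weight is affine-linear in each spectral parameter and the domain wall boundary forces a single colour-changing vertex on each line, so $H(\ol x|\ol y)$ is a polynomial of degree $n-1$ in each $x_i$ and, by the analogous argument applied to the quantum lines, in each $y_j$. When $x_n=y_n$ the $R$-matrix joining the corresponding auxiliary and quantum lines degenerates to $h$ times the permutation operator, which freezes the last row and column of the grid and yields the Korepin recursion
\[
H(\ol x|\ol y)\big|_{x_n=y_n}
= h\prod_{j=1}^{n-1}(x_j-y_n+h)(y_n-y_j+h)\;H\big(\ol x\setminus\{x_n\}\,\big|\,\ol y\setminus\{y_n\}\big),
\]
together with the initial value $H(\{x_1\}|\{y_1\})=h$ read off directly from the $E_{21}\otimes E_{12}$ entry of \eqref{rationalRmatrix}.

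Next I would check that $K(\ol x|\ol y)$ satisfies exactly the same properties: symmetry in $\ol x,\ol y$ is manifest from the determinant together with the Vandermonde factors; the degree bound in each variable follows by expanding the determinant along the corresponding row and cancelling against the denominator; and the recursion follows because at $x_n=y_n$ every off-diagonal entry of the last row of the determinant picks up the vanishing factor $(x_n-y_n)$, so the determinant collapses to its last diagonal entry times the $(n-1)\times(n-1)$ minor, which after extracting the obvious linear factors reproduces $K(\ol x\setminus\{x_n\}|\ol y\setminus\{y_n\})$, with $K(\{x_1\}|\{y_1\})=h$. Using the symmetry in $\ol y$, the recursion pins down the value of the degree-$(n-1)$ polynomial $H(\ol x|\ol y)$ as a function of $x_n$ at the $n$ points $x_n\in\{y_1,\dots,y_n\}$ in terms of size $n-1$, and the same for $K$; induction on $n$ then gives $H(\ol x|\ol y)=K(\ol x|\ol y)$. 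As an alternative route, \eqref{domainwallpartitionIKdetrational} can be obtained from \eqref{domainwallpartitionIKdet} by the substitution $q=1+\tfrac{h}{2}\epsilon$, $u_i=1+x_i\epsilon$, $v_j=1+y_j\epsilon$, dividing both sides by $\epsilon^{n^2}$ and letting $\epsilon\to 0$. The only step needing genuine care is the degree bound, which rests on the standard combinatorial fact that a domain wall configuration contains exactly one colour-changing vertex on each line; the rest -- the freezing computation and the determinant manipulation -- is routine.
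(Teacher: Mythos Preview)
The paper does not give its own proof of this statement; like its trigonometric counterpart \eqref{domainwallpartitionIKdet}, it is presented as a classical fact due to Korepin and Izergin. Your outline is exactly that classical argument (symmetry, polynomial degree bound, one-point recursion, initial value, then Lagrange interpolation and induction), so in spirit you are reproducing the proof the paper defers to.

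There is, however, a genuine inaccuracy in your justification of the degree bound. The assertion that ``a domain wall configuration contains exactly one colour-changing vertex on each line'' is false: under the bijection with alternating sign matrices, a row of the ASM containing a $-1$ corresponds to a lattice row with at least three colour-changing vertices (e.g.\ the middle row of the $3\times 3$ ASM with a $-1$ in the centre). The correct argument is local to the \emph{extremal} row adjacent to the all-$1$ output boundary: along that row the auxiliary colour starts at $1$ and, once it has switched to $2$, is forced to remain $2$ because the only vertex with aux-input $2$ and quantum-output $1$ is $(2,1)\to(2,1)$. Hence that single row carries exactly one $c$-vertex of weight $h$, giving degree $n-1$ in its spectral parameter; the symmetry in $\ol x$ (and the analogous column argument for $\ol y$) then propagates the bound to all variables. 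With this correction your recursion and interpolation go through as written. Your alternative degeneration route from \eqref{domainwallpartitionIKdet} is also sound, and is in fact closer to how the paper itself passes from the trigonometric to the rational setting elsewhere (cf.\ the proof of Proposition~\ref{showrelationproprational}).
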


We now present analogous results obtained in previous sections.
The following is the Yangian analogue of
Theorem
\ref{thmmultiplecommutationrelations}.

\begin{theorem} \label{thmmultiplecommutationrelationsrational}
The following commutation relation holds:
\begin{align}
&T_{N1}(\overline{x}^1) T_{N2}(\overline{x}^2) \cdots T_{NN}(\overline{x}^N)
=\sum_{\{ \overline{x}^1, \overline{x}^2,\dots,\overline{x}^N \} \mapsto \{ \overline{y}^1,
\overline{y}^2,\dots,\overline{y}^N \} } 
\frac{1}{\prod_{1 \le j < k \le N} (\overline{y}^k-\overline{y}^{j})( \overline{y}^j- \overline{y}^{k-1}+h)}
\nonumber \\
\times&
\frac{\prod_{j=1}^{N-1} ( \overline{x}^N- \overline{y}^j+h)}{
\prod_{j=1}^{N-1} ( \overline{x}^j- \overline{y}^N+h)
\prod_{\ell=1}^{N-2}
\prod_{j=1}^\ell \prod_{k=1}^{\ell+1} ( \overline{x}^j- \overline{y}^k+h)}
\nonumber \\
\times&W(
\overline{x}^1,\overline{x}^1 \cup \overline{x}^2,\dots,\overline{x}^1 \cup \cdots \cup \overline{x}^{N-1}
|\overline{y}^1,\overline{y}^2,\dots,\overline{y}^{N-1},\overline{y}^N|1^{|\overline{x}^1|},2^{|\overline{x}^2|},\dots,(N-1)^{|\overline{x}^{N-1}|},N^{|\overline{x}^{N}|} )
\nonumber \\
\times&
T_{NN}(\overline{y}^N) \cdots T_{N2}(\overline{y}^2)  T_{N1}(\overline{y}^1).
\end{align}
\end{theorem}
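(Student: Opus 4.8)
The plan is to repeat the proof of Theorem~\ref{thmmultiplecommutationrelations} essentially line by line, replacing the trigonometric $R$-matrix \eqref{trigRverone} everywhere by the rational $R$-matrix \eqref{rationalRmatrix} and keeping track only of the change in the overall rational prefactors. First I would establish the Yangian analogue of Proposition~\ref{keypropositionforproof}. The ordering $T_{N1}(\overline{x}^1)\cdots T_{NN}(\overline{x}^N)=\sum_{\{\overline{x}\}\mapsto\{\overline{y}\}} G\, T_{NN}(\overline{y}^N)\cdots T_{N1}(\overline{y}^1)$ is obtained using only the fundamental relations \eqref{rationalfundcommone}--\eqref{rationalfundcommtwo}, so the coefficients $G$ are again independent of the representation of $T(x)$; choosing for $T(x)$ the tensor product of vector representations with auxiliary parameters equal to one fixed partition $\{\overline{y}^1_0,\dots,\overline{y}^N_0\}$, acting on $e_{N^{|\overline{x}^1|+\cdots+|\overline{x}^{N-1}|}}$ on the right and taking the coefficient of $e^*_{1^{|\overline{x}^1|},\dots,(N-1)^{|\overline{x}^{N-1}|}}$ on the left extracts each $G$ as a rectangular partition function built from \eqref{rationalRmatrix}. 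All the ``freezing'' steps carry over verbatim because \eqref{rationalRmatrix} has nonzero entries only $R_{ii}^{ii}=x-y+h$, $R_{ij}^{ij}=x-y$, $R_{ij}^{ji}=h$ ($i\neq j$), which obey the same conservation-of-colours restriction used in Section~3; the only substitutions are $(q-q^{-1})u\mapsto h$ and $(qu-q^{-1}v)\mapsto (x-y+h)$ in the frozen weights, so that in particular $W_j(\overline{y}^j_0;\overline{y}^j_0)=\overline{y}^j_0-\overline{y}^j_0+h$ replaces \eqref{domainwallspecialcase}, and the domain wall partition function of \eqref{domainwallpartitionIKdetrational} appears in place of $H(\overline{u}|\overline{v})$.

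Next I would prove the Yangian analogue of Proposition~\ref{keyrelationforproof}, identifying the rectangular partition function $H(\overline{x}^1,\dots,\overline{x}^{N-1}|\overline{y}^1,\dots,\overline{y}^{N-1})$ with a specialization of the rational weight function \eqref{rational-wavefunction}. As before this factors through: (i) a Yangian version of Lemma~\ref{slightlylarger}, adjoining one frozen layer of colour $N$ whose contribution, by the same uniqueness argument with $R_{NN}^{NN}=x-y+h$, is $\prod_{j=1}^{N-1}(\overline{x}^j-\overline{y}^N+h)$; and (ii) a Yangian version of Proposition~\ref{relateoffshellbetherectangular}, which I would prove by the same Borodin--Wheeler collapsing argument, now using $R(x,x)=h\sum_{i,j}E_{ij}\otimes E_{ji}$ and the rational unitarity relation \eqref{eq:unitarity-rational}: each diagonal intersection contributes a factor $h$ (matching the $h$'s in \eqref{rational-wavefunction}) and each auxiliary/quantum pair sharing a parameter contributes $(x_k-x_\ell+h)(x_\ell-x_k+h)$ via \eqref{eq:unitarity-rational}, giving an overall collapsing factor $\prod_{\ell=1}^{N-2}\prod_{j=1}^{\ell}\prod_{k=1}^{\ell+1}(\overline{x}^j-\overline{x}^k+h)$, after which Theorem~\ref{offshellbetherationalweight} ($\psi=W$ in the rational case) completes the identification. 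Combining these two propositions exactly as in the proof of Theorem~\ref{thmmultiplecommutationrelations}, and collecting the prefactor $1/\prod_{1\le j<k\le N}(\overline{y}^k-\overline{y}^j)(\overline{y}^j-\overline{y}^{k-1}+h)$ coming from the $\overline{y}$-reordering, produces the stated identity.

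There is essentially no hard analytic step here; the one place needing care is purely organizational, namely checking that every ice-rule restriction, equal-parameter specialization and unitarity identity invoked in Section~3 has an exact counterpart for \eqref{rationalRmatrix}. Since the rational $R$-matrix has the same combinatorial support as the trigonometric one, and since \eqref{eq:unitarity-rational}, \eqref{domainwallpartitionIKdetrational} and Theorem~\ref{offshellbetherationalweight} supply the required rational analogues, this bookkeeping is routine. As an independent consistency check one may alternatively attempt to deduce the statement from Theorem~\ref{thmmultiplecommutationrelations} by the standard rational degeneration $q=e^{h\epsilon/2}$, $u_i=e^{h\epsilon x_i}$, $\epsilon\to 0$, but the normalizations of the $R$-matrix and of the weight functions must be tracked carefully through the limit, so I would not take this as the primary proof.
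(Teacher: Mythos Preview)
Your proposal is correct and follows essentially the same approach as the paper: the paper states that Theorem~\ref{thmmultiplecommutationrelationsrational} follows from combining Proposition~\ref{keypropositionforproofrational} and Proposition~\ref{keyrelationforproofrational} (the Yangian analogues of Propositions~\ref{keypropositionforproof} and~\ref{keyrelationforproof}), the latter in turn reduced to Lemma~\ref{slightlylargerrational} and Proposition~\ref{relateoffshellbetherectangularrational}, all of which are simply the trigonometric arguments rerun with the rational $R$-matrix \eqref{rationalRmatrix}. Your remark that the degeneration $q=e^{\epsilon h/2}$ could serve as a consistency check but not the primary proof is also in line with the paper, which reserves that limiting argument for the separate Proposition~\ref{showrelationproprational}.
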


Theorem \ref{thmmultiplecommutationrelationsrational}
follows from combining
Proposition \ref{keypropositionforproofrational}
and Proposition \ref{keyrelationforproofrational}
given below.

\begin{proposition} \label{keypropositionforproofrational}
The following commutation relation holds:
\begin{align}
&T_{N1}(\overline{x}^1) T_{N2}(\overline{x}^2) \cdots T_{NN}(\overline{x}^N)
=\sum_{\{ \overline{x}^1, \overline{x}^2,\dots,\overline{x}^N \} \mapsto \{ \overline{y}^1,
\overline{y}^2,\dots,\overline{y}^N \} } 
\frac{1}{\prod_{1 \le j < k \le N} (\overline{y}^k-\overline{y}^{j})( \overline{y}^j- \overline{y}^{k-1}+h)}
\nonumber \\
\times&
\prod_{j=1}^{N-1} (\overline{x}^N- \overline{y}^j+h)
H(\overline{x}^1, \overline{x}^2,\dots,\overline{x}^{N-1}|\overline{y}^1,\overline{y}^2,\dots,\overline{y}^{N-1})
T_{NN}(\overline{y}^N) \cdots T_{N2}(\overline{y}^2)  T_{N1}(\overline{y}^1), 
\end{align}
where
\begin{align}
&H(\overline{x}^1, \overline{x}^2,\dots,\overline{x}^{N-1}|\overline{y}^1,\overline{y}^2,\dots,\overline{y}^{N-1}) \nonumber \\
=&
e_{1^{|\overline{x}^1|},2^{|\overline{x}^2|},\dots,(N-1)^{|\overline{x}^{N-1}|}}^*
T_{N1}(\overline{x}^1;\overline{y}^1,\overline{y}^2,\dots,\overline{y}^{N-1}) 
T_{N2}(\overline{x}^2;\overline{y}^1,\overline{y}^2,\dots,\overline{y}^{N-1}) 
\nonumber \\
&\times \cdots \times
T_{NN-1}(\overline{x}^{N-1};\overline{y}^1,\overline{y}^2,\dots,\overline{y}^{N-1}) 
e_{N^{|\overline{x}^1|+|\overline{x}^2|+\cdots+|\overline{x}^{N-1}|}}.
\label{commcoeffratpartrational}
\end{align}
\end{proposition}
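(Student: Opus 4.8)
The plan is to reproduce the proof of Proposition~\ref{keypropositionforproof} step by step, replacing the trigonometric $R$-matrix by the rational one \eqref{rationalRmatrix}, the fundamental commutation relations \eqref{fundcommone}, \eqref{fundcommtwo} by \eqref{rationalfundcommone}, \eqref{rationalfundcommtwo}, and the basic factors by their rational counterparts ($q u-q^{-1}v \mapsto x-y+h$ and $(q-q^{-1})u \mapsto h$). First I would note that, using only \eqref{rationalfundcommone} and \eqref{rationalfundcommtwo}, the product $T_{N1}(\overline{x}^1)\cdots T_{NN}(\overline{x}^N)$ can be brought to the normal-ordered form $\sum_{\{\overline{x}^1,\dots,\overline{x}^N\}\mapsto\{\overline{y}^1,\dots,\overline{y}^N\}} G(\overline{x}^1,\dots,\overline{x}^N|\overline{y}^1,\dots,\overline{y}^N)\,T_{NN}(\overline{y}^N)\cdots T_{N1}(\overline{y}^1)$, and that the coefficients $G$ are independent of the representation of the $L$-operator, entirely analogously to \eqref{generalform}.

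Next I would take the $L$-operator to be the tensor product of vector representations $T^{\mathrm{vect}}(x;\overline{\xi})$ whose quantum-space spectral parameters are assembled from one fixed partition $\{\overline{y}^1_0,\dots,\overline{y}^N_0\}$ (with a fixed internal ordering in each block, used uniformly for all $L$-operator elements), act both sides on $e_{N^{|\overline{x}^1|+\cdots+|\overline{x}^{N-1}|}}$, and extract the coefficient of $e_{1^{|\overline{x}^1|},\dots,(N-1)^{|\overline{x}^{N-1}|}}$. On the left this produces the rational partition function $F(\overline{x}^1,\dots,\overline{x}^N|\overline{y}^1_0,\dots,\overline{y}^{N-1}_0)$, the analogue of \eqref{lefthandsidepartitionfunction}. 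On the right, the rational $R$-matrix satisfies the same restriction, namely $[R(x,y)]_{Nj}^{k\ell}=0$ unless $j=\ell$ and $k=N$ whenever $\ell\neq N$, together with the same conservation-of-colors property used in the trigonometric computation; hence the action of $T_{NN}(\overline{y}^N;\dots)\cdots T_{N1}(\overline{y}^1;\dots)$ on the dual basis vector collapses exactly as in \eqref{righthandpartitionfunction}, with $(q\overline{v}^j-q^{-1}\overline{v}^k_0)$ replaced by $(\overline{y}^j-\overline{y}^k_0+h)$ and $(\overline{v}^j-\overline{v}^k_0)$ unchanged. The polynomial prefactor $\prod_{j}\prod_{k<j}(\overline{y}^j-\overline{y}^k_0)$ then forces $\overline{y}^N=\overline{y}^N_0$ and, inductively, $\overline{y}^j=\overline{y}^j_0$ for all $j$, so a single summand survives; the rational domain-wall partition function on coinciding parameter sets evaluates to $W_j(\overline{y}^j_0;\overline{y}^j_0)=\overline{y}^j_0-\overline{y}^j_0+h$, the analogue of \eqref{domainwallspecialcase}. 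Comparing the two sides yields $G$ in terms of the rational $H$-partition function \eqref{commcoeffratpartrational}, and the trivial action $T_{NN}(\overline{x}^N;\overline{y}^1,\dots)\,e_{N^{|\overline{x}^1|+\cdots+|\overline{x}^{N-1}|}}=\prod_{j=1}^{N-1}(\overline{x}^N-\overline{y}^j+h)\,e_{N^{|\overline{x}^1|+\cdots+|\overline{x}^{N-1}|}}$ relates $F$ to $H$, giving the claim.

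I do not expect an essential obstacle: the unitarity and Yang--Baxter relations \eqref{eq:unitarity-rational}, \eqref{eq:YB-rational} are available, and every structural feature of the rational $R$-matrix invoked above is already present in the trigonometric setting. The main points demanding care are the bookkeeping of the rational prefactors in the freezing steps and verifying that the degeneration argument pinning down $\overline{y}^j=\overline{y}^j_0$ transfers verbatim, which it does precisely because the relevant factors $(\overline{y}^j-\overline{y}^k_0)$ coincide with their trigonometric counterparts and only the $h$-shifted factors change form.
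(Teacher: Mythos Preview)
Your proposal is correct and is exactly the approach the paper intends: Section~5 presents Proposition~\ref{keypropositionforproofrational} without a separate proof, relying on the fact that the argument for Proposition~\ref{keypropositionforproof} transfers verbatim once the trigonometric $R$-matrix weights are replaced by their rational counterparts. You have correctly identified all the substitutions ($qu-q^{-1}v\mapsto x-y+h$, $(q-q^{-1})u\mapsto h$), the structural properties of the rational $R$-matrix needed for the freezing steps, and the fact that the factors $(\overline{y}^j-\overline{y}^k_0)$ driving the collapse to a single summand are unchanged.
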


\begin{proposition} \label{keyrelationforproofrational}
The following holds:
\begin{align}
&H(\overline{x}^1, \overline{x}^2,\dots,\overline{x}^{N-1}|\overline{y}^1,\overline{y}^2,\dots,\overline{y}^{N-1})
\nonumber \\
=&
\frac{1}{
\prod_{j=1}^{N-1} ( \overline{x}^j- \overline{y}^N+h)
\prod_{\ell=1}^{N-2}
\prod_{j=1}^\ell \prod_{k=1}^{\ell+1} ( \overline{x}^j- \overline{x}^k+h)}
\nonumber \\
\times& W(
\overline{x}^1,\overline{x}^1 \cup \overline{x}^2,\dots,\overline{x}^1 \cup \cdots \cup \overline{x}^{N-1}
|\overline{y}^1,\overline{y}^2,\dots,\overline{y}^{N-1},\overline{y}^N|1^{|\overline{u}^1|},2^{|\overline{u}^2|},\dots,(N-1)^{|\overline{u}^{N-1}|},N^{|\overline{u}^{N}|} ).
\end{align}

\end{proposition}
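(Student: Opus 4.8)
The plan is to transcribe the trigonometric argument used for Proposition~\ref{keyrelationforproof}, replacing the trigonometric $R$-matrix by the rational one \eqref{rationalRmatrix} and the elementary factors $qu-q^{-1}v$ and $(q-q^{-1})u$ by $x-y+h$ and $h$ respectively. Concretely I would split the statement into a rational analogue of Lemma~\ref{slightlylarger} together with a rational analogue of Proposition~\ref{relateoffshellbetherectangular} (i.e.\ of \cite[Proposition 7.1.1]{BW}), and then combine them precisely as in the proof of Proposition~\ref{keyrelationforproof}.

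First I would introduce the enlarged partition function
\[
K(\overline{x}^1,\dots,\overline{x}^{N-1}|\overline{y}^1,\dots,\overline{y}^{N-1},\overline{y}^N)
:=
e^*_{1^{|\overline{x}^1|},\dots,(N-1)^{|\overline{x}^{N-1}|},N^{|\overline{x}^N|}}\,
T_{N1}(\overline{x}^1;\overline{y})\cdots T_{N,N-1}(\overline{x}^{N-1};\overline{y})\,
e_{N^{|\overline{x}^1|+\cdots+|\overline{x}^N|}},
\]
where $\overline{y}=\overline{y}^1\cup\cdots\cup\overline{y}^N$ and the monodromy is built from the rational $R$-matrix. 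Since \eqref{rationalRmatrix} has the same restricted non-zero pattern as the trigonometric matrix, the ``conservation of colours'' argument of Lemma~\ref{slightlylarger} applies verbatim: the rightmost $|\overline{y}^N|$ columns can only carry the colour $N$, the corresponding vertices freeze with matrix element $R_{NN}^{NN}(x,y)=x-y+h$, and this yields $K=\prod_{j=1}^{N-1}(\overline{x}^j-\overline{y}^N+h)\,H(\overline{x}^1,\dots,\overline{x}^{N-1}|\overline{y}^1,\dots,\overline{y}^{N-1})$.

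Next I would prove the rational Borodin--Wheeler identity, that
\[
W(\overline{x}^1,\overline{x}^1\cup\overline{x}^2,\dots|\overline{y}^1,\dots,\overline{y}^N|1^{|\overline{x}^1|},\dots,N^{|\overline{x}^N|})
=\prod_{\ell=1}^{N-2}\prod_{j=1}^{\ell}\prod_{k=1}^{\ell+1}(\overline{x}^j-\overline{x}^k+h)\;
K(\overline{x}^1,\dots,\overline{x}^{N-1}|\overline{y}^1,\dots,\overline{y}^N),
\]
by the layer-collapsing argument. For each of the $N-2$ lower layers one reorders the spaces into lexicographic order using the rational Yang--Baxter relation \eqref{eq:YB-rational}; on the ``diagonal'' the two crossing lines carry equal spectral parameters, where $R(x,x)=h\sum_{i,j}E_{ij}\otimes E_{ji}$ is $h$ times the permutation, producing a factor $\prod h$ over that layer; the rational unitarity relation \eqref{eq:unitarity-rational}, $R_{12}(x,y)R_{21}(y,x)=(x-y+h)(y-x+h)\,\mathrm{I}\otimes\mathrm{I}$, then unravels the layer and introduces $\prod_{k\neq\ell}(w_k-w_\ell+h)$ over its parameters; and the remaining right-hand boundary of the layer, being frozen to one colour, contributes one further factor of the form $\overline{w}_{\cdots}-\overline{w}_{\cdots}+h$. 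Iterating over the layers gives the stated product, while Theorem~\ref{offshellbetherationalweight} identifies the top (last) layer with the rational weight function $W$. Combining the two steps yields
\[
H(\overline{x}^1,\dots,\overline{x}^{N-1}|\overline{y}^1,\dots,\overline{y}^{N-1})
=\frac{W(\overline{x}^1,\overline{x}^1\cup\overline{x}^2,\dots|\overline{y}^1,\dots,\overline{y}^N|1^{|\overline{x}^1|},\dots,N^{|\overline{x}^N|})}
{\prod_{j=1}^{N-1}(\overline{x}^j-\overline{y}^N+h)\,\prod_{\ell=1}^{N-2}\prod_{j=1}^{\ell}\prod_{k=1}^{\ell+1}(\overline{x}^j-\overline{x}^k+h)},
\]
which is the assertion.

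I do not expect a genuinely new obstacle: the rational $R$-matrix \eqref{rationalRmatrix} obeys the same ice-type restriction on non-zero entries, the same degeneration $R(x,x)=hP$ at coinciding arguments, and a unitarity relation of the same shape as in the trigonometric case, so every graphical reduction in the proofs of Lemma~\ref{slightlylarger} and Proposition~\ref{relateoffshellbetherectangular} transfers unchanged. The one point that genuinely requires care is the bookkeeping of the non-symmetric shift $x-y+h$ (the counterpart of the non-symmetric $qu-q^{-1}v$): in each frozen vertex and in each application of unitarity one must keep track of which argument carries the $+h$, so that the final rational prefactors come out exactly as written. This is the step I would check most carefully.
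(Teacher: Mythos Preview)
Your proposal is correct and follows essentially the same route as the paper: the paper proves Proposition~\ref{keyrelationforproofrational} by combining the rational analogues Lemma~\ref{slightlylargerrational} and Proposition~\ref{relateoffshellbetherectangularrational}, which are exactly the two steps you outline (freezing the $\overline{y}^N$ columns, then the layer-collapsing Borodin--Wheeler argument together with Theorem~\ref{offshellbetherationalweight}). Your identification of the rational replacements $R(x,x)=hP$, the unitarity factor $(x-y+h)(y-x+h)$, and the frozen weight $R_{NN}^{NN}=x-y+h$ is accurate, and the caution you flag about tracking which argument carries the $+h$ is the only bookkeeping point that needs attention.
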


Proposition
\ref{keyrelationforproofrational}
follows from combining
Lemma \ref{slightlylargerrational} 
and Proposition \ref{relateoffshellbetherectangularrational}.
For description, we 
introduce the following partition functions
\begin{align}
&K
(\overline{x}^1,\overline{x}^2,\dots,\overline{x}^{N-1}
|\overline{y}^1,\overline{y}^2,\dots,\overline{y}^{N-1},\overline{y}^N)
\nonumber \\
:=&e_{1^{|\overline{x}^1|},2^{|\overline{x}^2|},\dots,(N-1)^{|\overline{x}^{N-1}|},N^{|\overline{x}^{N}|} }^*
T_{N1}(\overline{x}^1;\overline{y}^1,\overline{y}^2,\dots,\overline{y}^{N-1},\overline{y}^N) 
T_{N2}(\overline{x}^2;\overline{y}^1,\overline{y}^2,\dots,\overline{y}^{N-1},\overline{y}^N) 
\nonumber \\
&\times \cdots \times
T_{NN-1}(\overline{x}^{N-1};\overline{y}^1,\overline{y}^2,\dots,\overline{y}^{N-1},\overline{y}^N) 
e_{N^{|\overline{x}^1|+|\overline{x}^2|+\cdots+|\overline{x}^{N-1}|+|\overline{x}^N|}}.
\end{align}

\begin{lemma} \label{slightlylargerrational}
The following holds:
\begin{align}
&
K
(\overline{x}^1,\overline{x}^2,\dots,\overline{x}^{N-1}
|\overline{y}^1,\overline{y}^2,\dots,\overline{y}^{N-1},\overline{y}^N) \nonumber \\
=&\prod_{j=1}^{N-1} ( \overline{x}^j- \overline{y}^N+h)
H(\overline{x}^1, \overline{x}^2,\dots,\overline{x}^{N-1}|\overline{y}^1,\dots,\overline{y}^{N-1}).
\label{relationtwopartitionfunctionsgridrational}
\end{align}
\end{lemma}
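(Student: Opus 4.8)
The plan is to repeat, essentially verbatim, the graphical freezing argument that establishes the trigonometric Lemma~\ref{slightlylarger}, now with the rational $R$-matrix \eqref{rationalRmatrix} in place of the trigonometric one. Recall that $K(\overline{x}^1,\dots,\overline{x}^{N-1}|\overline{y}^1,\dots,\overline{y}^{N-1},\overline{y}^N)$ is obtained from $H(\overline{x}^1,\dots,\overline{x}^{N-1}|\overline{y}^1,\dots,\overline{y}^{N-1})$ by appending $|\overline{y}^N|=|\overline{x}^N|$ additional quantum columns carrying the parameters $\overline{y}^N$: in the dual covector $e^*_{1^{|\overline{x}^1|},\dots,(N-1)^{|\overline{x}^{N-1}|},N^{|\overline{x}^N|}}$ these extra columns carry the color $N$ at the top, while the source vector $e_{N^{|\overline{x}^1|+\cdots+|\overline{x}^N|}}$ carries the color $N$ at the bottom of every column. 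Using the rational Yang--Baxter relation \eqref{eq:YB-rational} we may freely reorder the quantum spaces of the grid, so we place the $\overline{y}^N$-columns to the right of all the $\overline{y}^1,\dots,\overline{y}^{N-1}$-columns.

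The key point is the conservation of colors for the rational $R$-matrix. From \eqref{rationalRmatrix} one reads off that $[R(x,y)]_{ij}^{k\ell}=0$ unless $\{i,j\}=\{k,\ell\}$, with $[R(x,y)]_{ii}^{ii}=x-y+h$, $[R(x,y)]_{ij}^{ij}=x-y$ and $[R(x,y)]_{ij}^{ji}=h$ for $i\neq j$; this is strictly more restrictive than the bare ice rule $i+j=k+\ell$. Each auxiliary line of the grid enters with index $N$ and leaves with index $j<N$ in the block of rows labelled $\overline{x}^j$. On a column that is color $N$ at both its top and its bottom endpoint, tracking colors from above and below shows that no vertex can change that column's color, since doing so would force some auxiliary line to deposit a color $<N$ into the column and later retrieve color $N$ from it, which is impossible with the boundary data fixed. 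Hence every $\overline{y}^N$-column is uniformly colored $N$, and each of its intersections with the rows $\overline{x}^1,\dots,\overline{x}^{N-1}$ is the vertex $[R(x,y)]_{NN}^{NN}=x-y+h$.

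Finally, collecting the frozen weights, the $\overline{y}^N$-columns contribute the product of $x-y+h$ over all $j=1,\dots,N-1$, all $x\in\overline{x}^j$, and all $y\in\overline{y}^N$, i.e. $\prod_{j=1}^{N-1}(\overline{x}^j-\overline{y}^N+h)$ in the product notation of \eqref{product-notation}, while the remainder of the grid is precisely the partition function $H(\overline{x}^1,\dots,\overline{x}^{N-1}|\overline{y}^1,\dots,\overline{y}^{N-1})$; this yields \eqref{relationtwopartitionfunctionsgridrational}. There is essentially no obstacle here beyond diagrammatic bookkeeping: the one point that genuinely needs to be checked is the sharpened color-conservation statement for \eqref{rationalRmatrix} quoted above, which is what makes the freezing of the $\overline{y}^N$-columns go through exactly as in the trigonometric case, and it is immediate from the explicit form of the rational $R$-matrix.
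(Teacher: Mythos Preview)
Your proposal is correct and follows the same freezing argument the paper uses for the trigonometric Lemma~\ref{slightlylarger}; the paper does not spell out a separate proof for the rational version but simply lists it among the analogous results, relying on exactly this transcription.

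One small remark: the sentence invoking \eqref{eq:YB-rational} to ``freely reorder the quantum spaces'' is both unnecessary and not quite right. In the definition of $K$ the $\overline{y}^N$-columns already sit to the right, so no reordering is needed; moreover, swapping quantum columns across different blocks would permute the fixed top labels $e^*_{1^{|\overline{x}^1|},\dots,N^{|\overline{x}^N|}}$, so the partition function is not symmetric under such moves. This does not affect your argument, since you never actually use the reordering, but you may want to drop that sentence. The clean way to see the freezing is the right-to-left induction implicit in your discussion: the auxiliary line enters the rightmost column with color $N$ (from $T_{Nj}$) and the top label there is $N$, so color conservation forces that vertex to be $[R]_{NN}^{NN}$; this propagates down the column and leftward along each row through the whole $\overline{y}^N$ block.
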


\begin{proposition} \label{relateoffshellbetherectangularrational}
The following holds:
\begin{align}
&W(
\overline{x}^1,\overline{x}^1 \cup \overline{x}^2,\dots,\overline{x}^1 \cup \cdots \cup \overline{x}^{N-1}
|\overline{y}^1,\overline{y}^2,\dots,\overline{y}^{N-1},\overline{y}^N|1^{|\overline{x}^1|},2^{|\overline{x}^2|},\dots,(N-1)^{|\overline{x}^{N-1}|},N^{|\overline{x}^{N}|} ) \nonumber \\
=&
\prod_{\ell=1}^{N-2}
\prod_{j=1}^\ell \prod_{k=1}^{\ell+1} (\overline{x}^j- \overline{y}^k+h)
K(\overline{x}^1, \overline{x}^2,\dots,\overline{x}^{N-1}|\overline{y}^1,\overline{y}^2,\dots,\overline{y}^{N-1},\overline{y}^N).
\end{align}
\end{proposition}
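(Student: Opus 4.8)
The plan is to prove Proposition~\ref{relateoffshellbetherectangularrational} in exact parallel with its trigonometric counterpart, Proposition~\ref{relateoffshellbetherectangular}: first establish the rational analogue of the generic partition-function identity \eqref{relatingpartitionfunctions}, and then read off the present statement as the special case $\boldsymbol{I}=(1^{|\overline{x}^1|},2^{|\overline{x}^2|},\dots,(N-1)^{|\overline{x}^{N-1}|},N^{|\overline{x}^{N}|})$, with the common set $\overline{w}=\overline{x}^1\cup\cdots\cup\overline{x}^N=\overline{y}^1\cup\cdots\cup\overline{y}^N$ and $\overline{w}_{J_j}=\overline{x}^j$, exactly as the trigonometric case is specialized. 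The object to analyse is therefore the $\mathfrak{gl}_N$ partition function $\psi$ built from the rational $R$-matrix \eqref{rationalRmatrix} in which, layer by layer, the ``quantum'' and ``auxiliary'' spaces carry the same set of spectral parameters; the task is to collapse it from the bottom layer upward, keeping track of the scalar factors produced.

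The key steps, in order, are: (i) in each layer reorder the spaces into lexicographic order of their parameter labels — this is harmless, since the input and output coordinates of a layer agree, so the reordering is realized by inserting $R$-matrices and using the Yang--Baxter relation \eqref{eq:YB-rational}; (ii) after reordering, the ``diagonal'' intersections in a layer carry coinciding parameters, and since $R(x,x)=h\,P$ with $P$ the permutation operator (the $(x-y)$ term of \eqref{rationalRmatrix} vanishes at $x=y$), each such vertex is replaced by $P$ at the cost of a scalar $h$, giving a factor $\prod_{i\in J_1\cup\cdots\cup J_j}h$ for the $j^\text{th}$ layer; (iii) apply the rational unitarity relation \eqref{eq:unitarity-rational} repeatedly to unravel the layer, so that each auxiliary space meets every quantum space preceding it in the lexicographic order, contributing $\prod_{k<\ell,\ k,\ell\in J_1\cup\cdots\cup J_j}(w_k-w_\ell+h)(w_\ell-w_k+h)$; combined with (ii) this assembles, in the product notation \eqref{product-notation}, into $(\overline{w}_{J_1\cup\cdots\cup J_j}-\overline{w}_{J_1\cup\cdots\cup J_j}+h)$; (iv) the remaining coordinates on the right of the layer are all equal to $j+1$, so that region is frozen with factor $(\overline{w}_{J_1\cup\cdots\cup J_j}-\overline{w}_{J_{j+1}}+h)$, and the layer has become the pure-color state feeding the next layer, so that layer $j$ contributes $(\overline{w}_{J_1\cup\cdots\cup J_j}-\overline{w}_{J_1\cup\cdots\cup J_{j+1}}+h)$; (v) iterating over $j=1,\dots,N-2$ produces the total prefactor $\prod_{\ell=1}^{N-2}\prod_{j=1}^{\ell}\prod_{k=1}^{\ell+1}(\overline{w}_{J_j}-\overline{w}_{J_k}+h)$, with the topmost layer left as $K(\overline{w}_{J_1},\dots,\overline{w}_{J_{N-1}}|\overline{w}|\boldsymbol{I})$; (vi) finally rewrite $\psi=W$ via Theorem~\ref{offshellbetherationalweight}, which, after the specialization above, is precisely the claimed identity.

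I expect the only real work to be the factor bookkeeping in steps (ii)--(v): one must check that $R(x,x)=hP$ (in place of the trigonometric $(q-q^{-1})u\,P$) and the rational unitarity factor $(x-y+h)(y-x+h)$ (in place of $(qx-q^{-1}y)(qy-q^{-1}x)$) combine, for each relevant index set $S$, into exactly $\overline{w}_S-\overline{w}_S+h$, and that the frozen-region factors and running color states come out correctly at each stage. Since the diagram combinatorics are identical to the trigonometric case (and to \cite[Proposition 7.1.1]{BW}), this is routine rather than a genuine obstacle. An even shorter alternative would be to obtain the rational identity as the degeneration of Proposition~\ref{relateoffshellbetherectangular} under the substitution $q=e^{\epsilon h/2}$, $u=e^{\epsilon x}$, $v=e^{\epsilon y}$ followed by an overall rescaling and $\epsilon\to0$, the only point to verify being that $W$, $K$ and $\psi$ have finite limits coinciding with their rational counterparts, which is immediate from their explicit forms. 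Either way there is no new structural input; the difficulty is purely combinatorial bookkeeping.
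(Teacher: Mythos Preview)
Your proposal is correct and follows essentially the same approach as the paper: Section~5 does not re-prove this result but simply presents it as the rational analogue of Proposition~\ref{relateoffshellbetherectangular}, obtained as the special case $\boldsymbol{I}=(1^{|\overline{x}^1|},\dots,N^{|\overline{x}^N|})$ of the generic identity \eqref{relatingpartitionfunctionsrational} combined with Theorem~\ref{offshellbetherationalweight}, and your steps (i)--(vi) are exactly the rational transcription of the trigonometric graphical argument (with $R(x,x)=hP$ and unitarity factor $(x-y+h)(y-x+h)$ replacing their $q$-analogues). Note incidentally that the factor in the displayed statement should read $(\overline{x}^j-\overline{x}^k+h)$ rather than $(\overline{x}^j-\overline{y}^k+h)$, as your argument and Proposition~\ref{keyrelationforproofrational} both confirm.
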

Proposition \eqref{relateoffshellbetherectangularrational} follows as a specific case 
$\boldsymbol{I}
=(1^{|\overline{u}^1|},2^{|\overline{u}^2|},\dots,(N-1)^{|\overline{u}^{N-1}|},N^{|\overline{u}^{N}|})$
of a more generic relation between partition functions, combined with Theorem \ref{offshellbetherationalweight}.

\begin{proposition} 
We have
\begin{align}
&\psi(\overline{z}_{J_1},\overline{z}_{J_1} \cup \overline{z}_{J_2},\dots,\overline{z}_{J_1} \cup \cdots \cup \overline{z}_{J_{N-1}}
|\overline{z}|\boldsymbol{I})
\nonumber \\
=&
\prod_{\ell=1}^{N-2}
\prod_{j=1}^\ell \prod_{k=1}^{\ell+1} ( \overline{z}_{J_j}-\overline{z}_{J_k}+h)
K
(\overline{z}_{J_1},\overline{z}_{J_2},\dots,\overline{z}_{J_{N-1}}
|\overline{z}|\boldsymbol{I}), \label{relatingpartitionfunctionsrational}
\end{align}
where
\begin{align}
K
(\overline{z}_{J_1},\overline{z}_{J_2},\dots,\overline{z}_{J_{N-1}}
|\overline{z}|\boldsymbol{I}) 
=&e_{\boldsymbol{I} }^*
T_{N1}(\overline{z}_{J_1};\overline{z}) 
T_{N2}(\overline{z}_{J_2};\overline{z}) 
\times \cdots \times
T_{NN-1}(\overline{z}_{J_{N-1}};\overline{z}) 
e_{N^{n}},
\end{align}
with $e_{\boldsymbol{I} }^*=e^*_{i_1,i_2,\dots,i_n}$ for $I=(i_1,i_2,\dots,i_n)$.

\end{proposition}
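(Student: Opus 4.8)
The plan is to transcribe, essentially verbatim, the proof of the trigonometric counterpart \eqref{relatingpartitionfunctions} (itself an adaptation of \cite[Proposition 7.1.1]{BW}), replacing everywhere the trigonometric $R$-matrix by the rational $R$-matrix \eqref{rationalRmatrix}, the trigonometric weight functions/partition functions by their rational analogues, and the $q$-deformed spectral combinations by their rational limits. First I would observe that $\psi(\overline{z}_{J_1},\overline{z}_{J_1}\cup\overline{z}_{J_2},\dots,\overline{z}_{J_1}\cup\cdots\cup\overline{z}_{J_{N-1}}|\overline{z}|\boldsymbol I)$ is the rational $\mathfrak{gl}_N$ partition function (the analogue of Figure~\ref{f:glN-wavefunction}) in which the `auxiliary' spaces of the $j^{\text{th}}$ layer carry exactly the parameters $\overline{z}_{J_j}$ while the `quantum' spaces of that layer carry $\overline{z}_{J_1}\cup\cdots\cup\overline{z}_{J_{j+1}}$, so that in each layer the input and output colour configurations coincide; one then collapses the layers one at a time, starting from the lowest.

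The key steps, in order, are as follows. Using the Yang--Baxter relation \eqref{eq:YB-rational} I would reorder the spaces inside each layer into the lexicographic order determined by the parameter labels (left to right, top to bottom); as in the trigonometric case this costs nothing, since each reordering is implemented by an $R$-matrix whose input and output coordinates agree. Next, in the $j^{\text{th}}$ layer the ``diagonal'' intersections pair spaces with equal spectral parameters, and the rational $R$-matrix degenerates to $R(z,z)=h\sum_{i,k}E_{ik}\otimes E_{ki}$, i.e.\ $h$ times the permutation operator; replacing each such intersection by the permutation produces the scalar $\prod_{i\in J_1\cup\cdots\cup J_j}h$. Then the rational unitarity relation \eqref{eq:unitarity-rational} is applied repeatedly to unravel the layer: since each `auxiliary' space thereby interacts with every `quantum' space preceding it lexicographically, this introduces $\prod_{k,\ell\in J_1\cup\cdots\cup J_j,\ k\neq\ell}(z_k-z_\ell+h)$, which together with the preceding scalar assembles into $\overline{z}_{J_1\cup\cdots\cup J_j}-\overline{z}_{J_1\cup\cdots\cup J_j}+h$ in the product notation \eqref{product-notation}. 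Finally, the remaining coordinates on the right of the layer are all equal to $j+1$, so that part of the configuration is frozen and contributes $(\overline{z}_{J_1\cup\cdots\cup J_j}-\overline{z}_{J_{j+1}}+h)$; combining, the $j^{\text{th}}$ layer collapses with overall factor $(\overline{z}_{J_1\cup\cdots\cup J_j}-\overline{z}_{J_1\cup\cdots\cup J_{j+1}}+h)$ and feeds the vector $(e_1)^{\otimes|J_1|}\otimes\cdots\otimes(e_{j+1})^{\otimes|J_{j+1}|}$ into layer $j+1$.

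Iterating over $j=1,\dots,N-2$ then accumulates the factor
\[
\prod_{j=1}^{N-2}\bigl(\overline{z}_{J_1\cup\cdots\cup J_j}-\overline{z}_{J_1\cup\cdots\cup J_{j+1}}+h\bigr)
=\prod_{\ell=1}^{N-2}\prod_{j=1}^{\ell}\prod_{k=1}^{\ell+1}\bigl(\overline{z}_{J_j}-\overline{z}_{J_k}+h\bigr),
\]
and leaves the last layer, whose left boundary is coloured $(1^{|J_1|},2^{|J_2|},\dots,(N-1)^{|J_{N-1}|})$ and which is therefore precisely $K(\overline{z}_{J_1},\dots,\overline{z}_{J_{N-1}}|\overline{z}|\boldsymbol I)$; multiplying these gives \eqref{relatingpartitionfunctionsrational}. (Composing afterward with Theorem~\ref{offshellbetherationalweight} and specializing $\boldsymbol I=(1^{|\overline{x}^1|},\dots,N^{|\overline{x}^N|})$ then yields Proposition~\ref{relateoffshellbetherectangularrational}.) I expect the only real obstacle to be bookkeeping: verifying that the rational degeneration $R(z,z)=h\,P$ plays the role of $R(u,u)=(q-q^{-1})u\,P$ with the correct scalar, that the rational unitarity scalar $(z_k-z_\ell+h)(z_\ell-z_k+h)$ correctly replaces $(qw_k-q^{-1}w_\ell)(qw_\ell-q^{-1}w_k)$, and that the self-interaction $h$-factors and the $k\neq\ell$ unitarity factors combine \emph{exactly} into $\overline{z}_{J_1\cup\cdots\cup J_j}-\overline{z}_{J_1\cup\cdots\cup J_j}+h$; since the rational $R$-matrix still obeys the ice rule, the frozen-configuration argument that collapses the right edge of each layer carries over without change.
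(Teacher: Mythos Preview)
Your proposal is correct and follows essentially the same approach as the paper: the paper does not write out a separate proof for the rational case but treats it as the direct transcription of the trigonometric argument for \eqref{relatingpartitionfunctions}, which is precisely what you outline, with the correct rational substitutions $R(z,z)=hP$, unitarity scalar $(x-y+h)(y-x+h)$, and frozen-edge weight $x-y+h$.

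One small slip in your description: the auxiliary spaces of the $j^{\text{th}}$ layer carry all of $\overline{z}_{J_1}\cup\cdots\cup\overline{z}_{J_j}$, not just $\overline{z}_{J_j}$ (this is what the argument of $\psi$ says); your subsequent computations already use the correct set $J_1\cup\cdots\cup J_j$, so this is only a wording issue and does not affect the argument.
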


The $N=2$  case can also be written using the rational version of the Izergin-Korepin determinant as
\begin{align}
&T_{21}(\overline{x}^1) T_{22}(\overline{x}^2) 
=\sum_{\{ \overline{x}^1, \overline{x}^2 \} \mapsto \{ \overline{y}^1,
\overline{y}^2 \} } 
\frac{
 \overline{x}^2- \overline{y}^1+h
}{ (\overline{y}^2-\overline{y}^{1})( \overline{y}^1- \overline{y}^{1}+h)}
K(\overline{x}^1|\overline{y}^1)
T_{22}(\overline{y}^2)  T_{21}(\overline{y}^1).
\end{align}

Molev's construction \cite{Molev} shows that
$T_{21}(z_{J_1};\overline{z}) T_{32}(z_{J_1} \cup z_{J_2};\overline{z}) \cdots T_{N,N-1}(z_{J_1} \cup \cdots \cup z_{J_{N-1}};\overline{z}) e_{N^n}
$ for all $J$ give rise to a construction of the Gelfand-Tsetlin basis for the tensor product of vector representation.
The following are the relations
between
$T_{N1}(\overline{z}_{J_1};\overline{z}) 
T_{N2}(\overline{z}_{J_2};\overline{z}) 
\cdots 
T_{NN-1}(\overline{z}_{J_{N-1}};\overline{z}) 
e_{N^{n}}
$ and $T_{21}(z_{J_1};\overline{w}) T_{32}(z_{J_1} \cup z_{J_2};\overline{w}) \cdots T_{N,N-1}(z_{J_1} \cup \cdots \cup z_{J_{N-1}};\overline{z}) e_{N^n}
$.
\begin{proposition}
The following relation holds:
\begin{align}
	&T_{21}(\ol{z}_{J_1};\ol{z})
	T_{32}(\ol{z}_{J_{1}}\cup\ol{z}_{J_2};\ol{z})
	\cdots
	T_{N,N-1}(\ol{z}_{J_{1}}\cup \cdots \cup\ol{z}_{J_{N-1}};\ol{z})e_{N^n}
	\nonumber \\
	=&
	\prod_{j=2}^{N-1}
	\left(\ol{z}_{J_{1}}\cup \cdots \cup \ol{z}_{J_{j-1}} -\ol{z}_{J_{j+1}}\cup \cdots \cup \ol{z}_{J_{N}}\right) 
	\nonumber \\
	&\times
	\prod_{1 \le j < k \le N-1 } \left( \ol{z}_{J_{j}}-\ol{z}_{J_{k}}+h \right)^{N-k}
		\left(\ol{z}_{J_{k}}-\ol{z}_{J_{j}}+h \right)^{N-k-1}
		\nonumber \\
		&\times
	\prod_{j=1}^{N-2} \left(\ol{z}_{J_{j}}- \ol{z}_{J_{j}}+h \right)^{N-j-1}
	T_{N1}(\ol{z}_{J_1};\ol{z}) T_{N2}(\ol{z}_{J_2};\ol{z})
	\cdots 
	T_{N,N-1}(\ol{z}_{J_{N-1}};\ol{z})
	e_{N^n}. \label{relationGZrational}
\end{align}

\end{proposition}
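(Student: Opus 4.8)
The plan is to prove \eqref{relationGZrational} exactly the way \eqref{relationGZ} was obtained in the $U_q(\widehat{\mathfrak{gl}}_N)$ case: first establish the rational analogue of Proposition~\ref{showrelationprop}, and then transfer it by the index reflection $i\mapsto N+1-i$. For the rational version one uses the rational counterparts of the two expressions \eqref{firstuniversalBethe} and \eqref{seconduniversalBethe} for the universal nested Bethe vector, in which $f(u,v)=(qu-q^{-1}v)/(u-v)$ is replaced by $f(x,y)=(x-y+h)/(x-y)$, the left and right Izergin--Korepin determinants are replaced by their rational versions (whose polynomial forms specialize as $K^{(l)}_n(\ol{z}|\ol{z})=K^{(r)}_n(\ol{z}|\ol{z})=\ol{z}-\ol{z}+h$, the rational form of \eqref{factorizationIZrl}), and the monodromy elements $\tilde{T}^{\mathrm{rat}}_{ij}$ by the rational $T_{ij}$; the rational analogue of Theorem~\ref{bethevectorequivalence} then asserts that the two rational expressions still coincide. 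These rational Bethe-vector formulas are the $q\to1$ degenerations of the ones in \cite{PRS}.

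First I would redo the two specializations. For the first one, take the $L$-operator to be the tensor-product vector representation $T^{\mathrm{vect}}(x;\ol{z})$ in the first Bethe-vector expression, multiply by $(\ol{t}^1-\ol{z})\prod_{\ell=2}^{N-1}(\ol{t}^{\ell}-\ol{t}^{\ell-1})$, and act on $e_{1^n}$; using $T_{kk}(x;\ol{z})e_{1^n}=e_{1^n}$ and $T_{kj}(x;\ol{z})e_{1^n}=0$ for $j>k\ge2$, all blocks $\ol{t}^k_{i,j}$ with $i\ge2$ collapse to the empty set exactly as in Figure~\ref{f:emptyset}. Distributing the overall factor, cancelling the denominators coming from the $f$-functions, the Izergin--Korepin determinants and the action on $e_{1^n}$, and then specializing $\ol{t}^j=\ol{z}_{I_{j+1}}\cup\cdots\cup\ol{z}_{I_N}$ (so that only the configuration $\ol{t}^j_{1,k}=\ol{z}_{I_{k+1}}$ survives and each surviving determinant becomes $\ol{z}_{I_{k+1}}-\ol{z}_{I_{k+1}}+h$), one obtains the rational analogue of \eqref{psi-expr1-final}. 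The computation of the second specialization is identical with the second expression: the blocks $\ol{t}^j_{k,\ell}$ with $j\le k<\ell$ collapse, and after the same cancellations and the same specialization one gets the rational analogue of \eqref{psi-expr2-final}, with overall factor $\prod_{j=2}^{N-1}\bigl(\ol{z}_{I_{j+1}}\cup\cdots\cup\ol{z}_{I_N}-\ol{z}_{I_1}\cup\cdots\cup\ol{z}_{I_{j-1}}\bigr)^{-1}$. Equating the two via the rational analogue of Theorem~\ref{bethevectorequivalence} yields the rational analogue of \eqref{showrelationGZ}, i.e.\ a relation expressing $T_{N-1,N}(\ol{z}_{I_N};\ol{z})\cdots T_{1,2}(\ol{z}_{I_2}\cup\cdots\cup\ol{z}_{I_N};\ol{z})e_{1^n}$ in terms of $T_{1N}(\ol{z}_{I_N};\ol{z})\cdots T_{12}(\ol{z}_{I_2};\ol{z})e_{1^n}$ with the rational scalar prefactors.

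Finally I would invoke the reflection symmetry to pass from this to \eqref{relationGZrational}. The rational $R$-matrix \eqref{rationalRmatrix} depends only on whether its indices coincide, so $[R(x,y)]_{ij}^{k\ell}=[R(x,y)]_{N+1-i,N+1-j}^{N+1-k,N+1-\ell}$; hence $T_{ij}\mapsto T_{N+1-i,N+1-j}$ is an algebra automorphism of $Y_h(\mathfrak{gl}_N)$ which intertwines the two vector representations up to the basis relabeling $e_{1^n}\leftrightarrow e_{N^n}$, exactly as in the $U_q$ case. Applying it, together with $I_j\mapsto J_{N+1-j}$, to the rational analogue of \eqref{showrelationGZ} produces \eqref{relationGZrational}. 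The main obstacle is purely bookkeeping: one must check that the distributed factor $(\ol{t}^1-\ol{z})\prod_\ell(\ol{t}^{\ell}-\ol{t}^{\ell-1})$ cancels against the rational denominators in the same pattern as in the trigonometric computation, so that the exponents $N-k$, $N-k-1$ and $N-j-1$ in \eqref{relationGZrational} are correctly reproduced. Since the rational $R$-matrix is the term-by-term degeneration $qu-q^{-1}v\mapsto x-y+h$, $u-v\mapsto x-y$ of the trigonometric one, no genuinely new cancellation can occur and the argument goes through verbatim; alternatively, \eqref{relationGZrational} can be deduced directly from \eqref{relationGZ} by the scaling limit $q\to1$.
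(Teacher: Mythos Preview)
Your proposal is correct, but your primary route differs from the paper's. You redo the two specializations of Section~4 directly in the rational setting, using rational analogues of the two universal Bethe-vector expressions \eqref{firstuniversalBethe}, \eqref{seconduniversalBethe} and of Theorem~\ref{bethevectorequivalence}, and then apply the index reflection. The paper instead establishes \eqref{showrelationGZrational} by a scaling-limit degeneration of the already-proved trigonometric identity \eqref{showrelationGZ}: it sets $w_j=e^{\epsilon z_j}$, $q=e^{\epsilon h/2}$, counts the total number of $R$-matrices on each side to determine the leading powers $\epsilon^{\alpha}$ and $\epsilon^{\beta+\gamma}$, verifies the combinatorial identity $\alpha=\beta+\gamma$ by explicit manipulation of double sums, and then extracts the leading coefficient as $\epsilon\to0$. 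Only afterwards does it invoke the index relabeling to pass to \eqref{relationGZrational}. You do mention this degeneration approach in your final sentence as an alternative, and that is in fact what the paper does.

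Each route has its cost. Yours is conceptually cleaner and parallels Section~4 exactly, but it relies on an external input: the rational (Yangian) version of \cite{PRS}'s Proposition~3.1, which is not stated in this paper and would need its own reference. The paper's degeneration argument is self-contained once \eqref{showrelationGZ} is in hand, but the price is the somewhat delicate bookkeeping needed to confirm that the $\epsilon$-orders on the two sides match; this is the content of the $\alpha=\beta+\gamma$ check, which is the main work in the paper's proof.
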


Changing indices, \eqref{relationGZrational}
is equivalent to the following.

\begin{proposition} \label{showrelationproprational}
The following relation holds:
\begin{align}
	&T_{N-1,N}(\ol{z}_{I_N};\ol{z})
	T_{N-2,N-1}(\ol{z}_{I_{N-1}}\cup\ol{z}_{I_N};\ol{z})
	\cdots
	T_{1,2}(\ol{z}_{I_{2}}\cup \cdots \cup\ol{z}_{I_N};\ol{z})e_{1^n}
	\nonumber
	\\
	=&
	\prod_{j=2}^{N-1}
	\left(\ol{z}_{I_{j+1}}\cup \cdots \cup \ol{z}_{I_{N}} -\ol{z}_{I_{1}}\cup \cdots \cup \ol{z}_{I_{j-1}}\right)
	\nonumber
	\\
&	\times 
	\prod_{1 \le j<k \le N-1} \left( \ol{z}_{I_{k+1}}-\ol{z}_{I_{j+1}}+h \right)^j
	\left( \ol{z}_{I_{j+1}}-\ol{z}_{I_{k+1}}+h \right)^{j-1}
	\nonumber
	\\
&	\times 
	\prod_{j=2}^{N-1} \left( \ol{z}_{I_{j+1}}- \ol{z}_{I_{j+1}}+h \right)^{j-1}
	T_{1N}(\ol{z}_{I_N};\ol{z})
	\cdots 
	T_{12}(\ol{z}_{I_2};\ol{z})
	e_{1^n}. \label{showrelationGZrational}
\end{align}

\end{proposition}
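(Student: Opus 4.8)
The plan is to establish \eqref{showrelationGZrational} in complete parallel with Proposition~\ref{showrelationprop}, i.e.\ by specializing the two equivalent Yangian expressions for the universal nested Bethe vector of Pakuliak--Ragoucy--Slavnov. The one external ingredient is the rational counterpart of Theorem~\ref{bethevectorequivalence}: the identity $B(\bar t^{\,1},\dots,\bar t^{\,N-1})=\widehat B(\bar t^{\,1},\dots,\bar t^{\,N-1})$ in which every trigonometric building block is replaced by its rational degeneration, namely $f(x,y)=(x-y+h)/(x-y)$ and the left/right Yangian Izergin--Korepin determinants, normalized so that $K^{(l)}_n(\ol x|\ol x)=K^{(r)}_n(\ol x|\ol x)=\ol x-\ol x+h$ (the rational analogue of \eqref{factorizationIZrl}). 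This holds either by repeating the PRS argument verbatim with the rational $R$-matrix \eqref{rationalRmatrix}, or by taking the $q\to1$ degeneration of Theorem~\ref{bethevectorequivalence}.

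I would then take the tensor product of vector representations for the PRS $L$-operator with inhomogeneities $\ol z$, multiply both $B$ and $\widehat B$ by the common overall factor $(\bar t^{\,1}-\ol z)\prod_{\ell=2}^{N-1}(\bar t^{\,\ell}-\bar t^{\,\ell-1})$, act on the highest-weight vector $e_{1^n}$, and call the results $\Psi$ and $\widetilde\Psi$. Carrying out the specialization $\bar t^{\,j}=\ol z_{I_{j+1}}\cup\cdots\cup\ol z_{I_N}$, $j=1,\dots,N-1$, exactly as in the first and second specializations of the $U_q(\widehat{\mathfrak{gl}}_N)$ case --- the only change being the dictionary $q\,a-q^{-1}b\rightsquigarrow a-b+h$, $a-b\rightsquigarrow a-b$ applied throughout --- proceeds identically: on the first expression the relations $T_{kk}(x;\ol z)e_{1^n}=e_{1^n}$ and $T_{kj}(x;\ol z)e_{1^n}=0$ for $j>k\ge2$ empty out most partition blocks, the vanishing numerator factors pin down $\bar t^{\,j}_{1,k}=\ol z_{I_{k+1}}$, the Izergin--Korepin determinants collapse by the rational version of \eqref{factorizationIZrl}, and the same chain of cancellations yields the rational analogue of \eqref{psi-expr1-final}; this is precisely the right-hand side of \eqref{showrelationGZrational}. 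The second expression, handled the same way, yields the rational analogue of \eqref{psi-expr2-final}, i.e.\ the left-hand side of \eqref{showrelationGZrational} up to the factor $\prod_{j=2}^{N-1}\bigl(\ol z_{I_{j+1}}\cup\cdots\cup\ol z_{I_N}-\ol z_{I_1}\cup\cdots\cup\ol z_{I_{j-1}}\bigr)^{-1}$. Equating $\Psi=\widetilde\Psi$ via the rational analogue of Theorem~\ref{bethevectorequivalence} and clearing that denominator gives \eqref{showrelationGZrational}; the passage between \eqref{showrelationGZrational} and \eqref{relationGZrational} then follows from the reflection $i\mapsto N+1-i$, under which the rational $R$-matrix \eqref{rationalRmatrix} is invariant.

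I expect the only real obstacle to be pinning down the rational PRS identity; once that is in place everything else is a transcription of the argument of Section~4, so I would either cite the Yangian version of the PRS formulas directly or insert a short lemma establishing it as the $q\to1$ limit (set $q=e^{\hbar}$, replace each spectral variable by $e^{\hbar(\cdot)}$, rescale the trigonometric $R$-matrix by $\hbar^{-1}$, and verify that every factor entering \eqref{showrelationGZ} has a finite, nonzero limit so that the identity passes term by term). A secondary, purely bookkeeping point is to redistribute and cancel the overall prefactors in the two specialized expressions in the same order as in the trigonometric computation, since the denominators $\bar t^{\,\ell}_{1,j}-\bar t^{\,\ell-1}_{1,k}$ and $\bar t^{\,k}_{j,k}-\ol z$ only cancel after that redistribution.
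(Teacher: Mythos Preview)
Your approach is correct but differs from the paper's. You propose to rerun the entire Section~4 argument in the Yangian setting: invoke (or degenerate to) the rational PRS identity $B=\widehat B$, take the vector representation, multiply by the common prefactor, act on $e_{1^n}$, and carry out the two specializations with the dictionary $qa-q^{-1}b\rightsquigarrow a-b+h$. This works, and every step you outline (the vanishing of $T_{kj}(x;\ol z)e_{1^n}$ for $j>k\ge2$, the emptying of partition blocks, the collapse of the Izergin--Korepin determinants via $K^{(l)}_n(\ol x|\ol x)=\ol x-\ol x+h$, the chain of cancellations) goes through unchanged.

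The paper instead degenerates the \emph{already-proven} trigonometric identity \eqref{showrelationGZ} directly: set $w_j=e^{\epsilon z_j}$, $q=e^{\epsilon h/2}$, expand both sides to leading order in $\epsilon$, and check that the leading powers match. The left-hand side contributes $\epsilon^\alpha$ where $\alpha$ counts the $R$-matrices in the product of $\tilde T$'s; the right-hand side contributes $\epsilon^{\beta+\gamma}$ where $\beta$ counts the $R$-matrices in $\tilde T_{1N}\cdots\tilde T_{12}$ and $\gamma$ is the total degree of the scalar prefactor. The only nontrivial work is the combinatorial identity $\alpha=\beta+\gamma$, which the paper verifies by rewriting both sides as sums over $|\ol w_{I_j}||\ol w_{I_k}|$. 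Your route avoids this counting but requires re-establishing (or citing) the rational PRS identity and then repeating two pages of specialization bookkeeping; the paper's route is shorter once \eqref{showrelationGZ} is in hand, reducing the rational case to a single degree-matching check.
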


%\begin{multline}
%	T^\mathrm{poly}_{N-1,N}(\ol{z}_{I_N}|\ol{z})
%	T^\mathrm{poly}_{N-2,N-1}(\ol{z}_{I_{N-1}}\cup\ol{z}_{I_N}|\ol{z})
%	\cdots
%	T^\mathrm{poly}_{1,2}(\ol{z}_{I_{2}}\cup \cdots \cup\ol{z}_{I_N}|\ol{z})\ket{vac}
%	\\
%	=
%	\prod_{j=2}^{N-1}
%	\left(\ol{z}_{I_{j+1}}\cup \cdots \cup \ol{z}_{I_{N}} -\ol{z}_{I_{1}}\cup \cdots \cup \ol{z}_{I_{j-1}}\right)
%	\\
%	\times 
%	\prod_{j=1}^{N-2} \prod_{j'=j+1}^{N-1} \left(\ol{z}_{I_{j'+1}}-\ol{z}_{I_{j+1}}+c\right)^j
%	\left(\ol{z}_{I_{j+1}}-\ol{z}_{I_{j'+1}}+c\right)^{j-1}
%	\\
%	\times 
%	\prod_{j=2}^{N-1} \left(\ol{z}_{I_{j+1}}- \ol{z}_{I_{j+1}}+c\right)^{j-1}
%	\\
%	\times 
%	T_{1N}^\mathrm{poly}(\ol{z}_{I_N}|\ol{z})
%	\cdots 
%	T_{12}^\mathrm{poly}(\ol{z}_{I_2}|\ol{z})
%	\ket{vac}.
%\end{multline}

\begin{proof}

We derive \eqref{showrelationGZrational}
as a degeneration from the relation for the quantum affine algebra case \eqref{showrelationGZ}.
First, recall the degeneration process from the trigonometric $R$-matrix
$\tilde{R}(u,v)$ to the rational one $R(x,y)$.
Introducing $x$, $y$ by $u=e^{ \epsilon x}$, $v=e^{\epsilon y}$, $q=e^{ \epsilon h/2 }$
and taking $\epsilon \to 0$, we get
$\tilde{R}(u,v) =  \epsilon R(x,y) + O(\epsilon)$ where $O(\epsilon)$ denotes higher order terms than $\epsilon$.

Keeping this in mind, we take $w_j =e^{\epsilon z_j}$ and $q=e^{ \epsilon h/2 }$.
Note the total number of $R$-matrices consisting the left-hand side
is
$\alpha:=|\ol{w}_{I_{1}}\cup \cdots \cup \ol{w}_{I_{N}}| \sum_{j=2}^N (j-1)|\ol{w}_{I_j}|$
since each $T^\mathrm{poly}_{N-1,N}(w_k|\ol{w})$ consists of $|\ol{w}|= |\ol{w}_{I_{1}}\cup \cdots \cup \ol{w}_{I_{N}}|$
$R$-matrices. Then the left-hand side of \eqref{showrelationGZ} becomes
\begin{multline}
	\tilde{T}_{N-1,N}(\ol{w}_{I_N};\ol{w})
	\tilde{T}_{N-2,N-1}(\ol{w}_{I_{N-1}}\cup\ol{w}_{I_N};\ol{w})
	\cdots
	\tilde{T}_{1,2}(\ol{w}_{I_{2}}\cup \cdots \cup\ol{w}_{I_N};\ol{w}) e_{1^n}
	\\
	\to \epsilon^{\alpha}
	T_{N-1,N}(\ol{z}_{I_N};\ol{z})
	T_{N-2,N-1}(\ol{z}_{I_{N-1}}\cup\ol{z}_{I_N};\ol{z})
	\cdots
	T_{1,2}(\ol{z}_{I_{2}}\cup \cdots \cup\ol{z}_{I_N};\ol{z}) e_{1^n}
+
	O(\epsilon^{\alpha}). \label{limitfromtrigone}
\end{multline}
Here $O(\epsilon^{\alpha})$ denote higher order terms
than $\epsilon^{\alpha}$.

We can also show the right-hand side becomes
\begin{align}
&\epsilon^{\beta+\gamma}
		\prod_{j=2}^{N-1}
	\left(\ol{z}_{I_{j+1}}\cup \cdots \cup \ol{z}_{I_{N}} -\ol{z}_{I_{1}}\cup \cdots \cup \ol{z}_{I_{j-1}}\right)
	\nonumber
	\\
&	\times 
	\prod_{1 \le j<k \le N-1} \left( \ol{z}_{I_{k+1}}-\ol{z}_{I_{j+1}}+h \right)^j
	\left( \ol{z}_{I_{j+1}}-\ol{z}_{I_{k+1}}+h \right)^{j-1}
	\nonumber
	\\
&	\times 
	\prod_{j=2}^{N-1} \left( \ol{z}_{I_{j+1}}- \ol{z}_{I_{j+1}}+h \right)^{j-1}
	T_{1N}(\ol{z}_{I_N};\ol{z})
	\cdots 
	T_{12}(\ol{z}_{I_2};\ol{z})
	e_{1^n}
	+O(\epsilon^{\beta+\gamma})
	, \label{limitfromtrigtwo}
\end{align}
where
$\beta:=|\ol{w}_{I_{1}}\cup \cdots \cup \ol{w}_{I_{N}}| \sum_{j=2}^N |\ol{w}_{I_j}|$
is the number of $R$-matrices
constructing $T_{1N}(\ol{z}_{I_N};\ol{z})
	\cdots 
	T_{12}(\ol{z}_{I_2};\ol{z})$,
	and
$\gamma:=\sum_{j=2}^{N-1}
	|\ol{w}_{I_{j+1}}\cup \cdots \cup \ol{w}_{I_{N}}||\ol{w}_{I_{1}}\cup \cdots \cup \ol{w}_{I_{j-1}}|
	+
	\sum_{1 \le j < k \le N-1 }	
	(2j-1) |\ol{w}_{I_{k+1}}||\ol{w}_{I_{j+1}}|
	+
	\sum_{j=2}^{N-1} (j-1) |\ol{w}_{I_{j+1}}|^2
	$
is the total degree of the overall factor.
$O(\epsilon^{\beta+\gamma})$ denote higher order terms
than $\epsilon^{\beta+\gamma}$.

One can check $\alpha=\beta+\gamma$ by rewriting
$\gamma$ as

\begin{align}
\gamma
	=&\sum_{1 \le j < k \le N} (k-j-1) |\ol{w}_{I_j} ||\ol{w}_{I_k}|
	+\sum_{2 \le j < k \le N } (2j-3) |\ol{w}_{I_{j}}||\ol{w}_{I_{k}}|
	+\sum_{j=3}^{N} (j-2) |\ol{w}_{I_{j}}|^2 \nonumber \\
	=&\sum_{j=3}^{N} (j-2) |\ol{w}_{I_{j}}|^2+\sum_{k=2}^N (k-2)|\ol{w}_{I_1}||\ol{w}_{I_k}|
	+\sum_{2 \le j < k \le N } (k+j-4) |\ol{w}_{I_{j}}||\ol{w}_{I_{k}}|,
\end{align}

and $\alpha-\beta$
as

\begin{align}
&\alpha-\beta=
|\ol{w}_{I_{1}}\cup \cdots \cup \ol{w}_{I_{N}}| \sum_{j=3}^N (j-2)|\ol{w}_{I_j}| \nonumber \\
=&\sum_{j=3}^N \sum_{k=j+1}^N (j-2)|\ol{w}_{I_j}||\ol{w}_{I_k}|
+\sum_{j=3}^N (j-2) |\ol{w}_{I_j}|^2
+\sum_{j=3}^N \sum_{k=1}^{j-1} (j-2)|\ol{w}_{I_j}||\ol{w}_{I_k}| \nonumber \\
=&\sum_{j=3}^N (j-2) |\ol{w}_{I_j}|^2+\sum_{3 \le j < k \le N} (j-2)|\ol{w}_{I_j}||\ol{w}_{I_k}|
+\sum_{1 \le j < k \le N} (k-2)|\ol{w}_{I_j}||\ol{w}_{I_k}| \nonumber \\
	=&\sum_{j=3}^{N} (j-2) |\ol{w}_{I_{j}}|^2+\sum_{k=2}^N (k-2)|\ol{w}_{I_1}||\ol{w}_{I_k}|
	+\sum_{2 \le j < k \le N } (k+j-4) |\ol{w}_{I_{j}}||\ol{w}_{I_{k}}|.
\end{align}

Comparing \eqref{limitfromtrigone} and \eqref{limitfromtrigtwo}
using $\alpha=\beta+\gamma$, dividing by $\epsilon^{\alpha+\beta}$
and taking $\epsilon \to 0$, we get \eqref{showrelationGZrational}.

\end{proof}

In the case $N=3$,
\begin{align}
	&T_{23}(\overline{z}_{I_3};\overline{z})
	T_{12}(\overline{z}_{I_2} \cup \overline{z}_{I_3};\overline{z})
	  e_{1^n} \nn \\
	=&(\overline{z}_{I_3}-\overline{z}_{I_1})
	(\overline{z}_{I_3}-\overline{z}_{I_2}+h) (\overline{z}_{I_3}-\overline{z}_{I_3}+h)
	T_{13}(\overline{z}_{I_3};\overline{z})
	T_{12}(\overline{z}_{I_2};\overline{z}) e_{1^n}.
	\end{align} \\
	Example:
	$n=3,$ $I_1=\{ 2 \}, I_2=\{ 3 \}, I_3=\{ 1 \}$ \\
	One can check
	\begin{align*}
	T_{23}(z_1;\overline{z})
	T_{12}(z_1;\overline{z})
	T_{12}(z_3;\overline{z}) e_{1^3}
	=h(z_1-z_2)(z_1-z_3+h)
	T_{13}(z_1;\overline{z})
	T_{12}(z_3;\overline{z}) e_{1^3},
	\end{align*}
	by directly computing
	\begin{align*}
	&T_{23}(z_1;\overline{z})
	T_{12}(z_1;\overline{z})
	T_{12}(z_3;\overline{z}) e_{1^3} \nn \\
	=&
	-h^3 (z_1-z_2) (z_1-z_2+h) (z_1-z_3+h)^2 (z_2-z_3) (z_3-z_1+h)
	e_{312} \nn \\
	&+
	h^4 (z_1-z_2) (z_1-z_2+h) (z_1-z_3+h)^2 (z_3-z_1+h)
	e_{321}, 
	\end{align*}
	and
	\begin{align*}
	&T_{13}(z_1;\overline{z})
	T_{12}(z_3;\overline{z}) e_{1^3} \nn \\
	=&
	-h^2 (z_1-z_2+h) (z_1-z_3+h) (z_2-z_3) (z_3-z_1+h)
	e_{312}   \nn \\
	&+h^3 (z_1-z_2+h) (z_1-z_3+h) (z_3-z_1+h)
	e_{321}.
	\end{align*}

\section{Conclusion}
We presented in this paper an approach to study
multiple commutation relations
of the quantum affine algebra $U_q(\widehat{\mathfrak{gl}}_N)$.
Our approach uses a graphical description
and is conceptual in the sense that it explains why
the trigonometric weight functions appear as coefficients of all summands.
For the rank one case, this also explains why the coefficients can also be expressed
using the Izergin-Korepin determinants.
It would be interesting to 
investigate other types of commutation relations or
different algebras such as the
Faddeev-Zamolodchikov algebras.
Studying by other means, such as the
$q$-vertex operator, is also interesting.

Another interesting topic is to explore applications.
As for the degenerate case, multiple commutation relations
were used in  \cite{IMO} to study three-dimensional partition functions.
Another interesting example is the application of rank one elliptic case
\cite{Rosengren}, in which multiple commutation relations
were effectively used to derive transformation formulas
for elliptic hypergeometric series.
It would be interesting to explore the usage 
of the higher rank version to special functions,
not to mention partition functions and correlation functions. 
\section*{Acknowledgements}
K. M. thanks Hitoshi Konno for discussions.
We thank Jacques H.H. Perk for pointing out relevant references on the nested Bethe ansatz.
We are grateful to the referee for the careful reading of our manuscript and for the helpful suggestions.
This work was partially supported by Grant-in-Aid for Scientific Research (C) 24K06889. 
\color{black}
\section*{Declarations}

\subsection*{Conflict of Interest Statement}
The authors declare that they have no conflicts of interest relevant to the content of this article.

\subsection*{Data Availability Statement}
No new data were generated or analyzed in this study; therefore, data sharing is not applicable.

\subsection*{Funding}
This research was partially supported by
Grant-in-Aid for Scientific Research (C) 24K06889.

\section*{Appendix: A construction of the Gelfand-Tsetlin basis}
\renewcommand*{\thetheorem}{\mbox{\textrm A.\arabic{theorem}}}
\renewcommand*{\theequation}{\mbox{\textrm A.\arabic{equation}}}

We briefly review
the construction of the Gelfand-Tsetlin basis by Molev \cite{Molev},
applied to the case of tensor product of the vector representation
of the quantum affine algebra $U_q(\widehat{\mathfrak{gl}}_n)$.
Here, we take $q$ to be a generic complex number.
We also remark that
there is another way of construction due to Nazarov-Tarasov \cite{NT},
which the Gelfand-Tsetlin basis
is constructed using quantum minors instead of single $L$-operators
even for the case of
tensor product of the vector representation. One can also see
\cite{KM} for the case of the elliptic quantum group $U_{q,p}(\widehat{\mathfrak{gl}}_n)$.

%$(u-v)t_{12}(v)t_{11}(u)+(q-q^{-1})vt_{11}(v)t_{12}(u)=(qu-q^{-1}v)t_{11}(u)t_{12}(v)$

%$(qu-q^{-1}v)t_{22}(v)t_{12}(u)=(u-v)t_{12}(u)t_{22}(v)+(q-q^{-1})ut_{22}(u)t_{12}(v)$

%$(q-q^{-1})vt_{11}(v)t_{22}(u)+(u-v)t_{12}(v)t_{21}(u)=(q-q^{-1})vt_{11}(u)t_{22}(v)+(u-v)t_{21}(u)t_{12}(v)$

We use the version of the $R$-matrix \eqref{trigRverone},
which is obtained by replacing $q$ by $q^{-1}$
in \cite[(2.15)]{HM}.

The quantum minor \cite[(2.28)]{HM}
adopted to our convention  is
\begin{align}
T_{b_1,\dots,b_r}^{a_1,\dots,a_r}(u)
=\sum_{\sigma \in S_r} (-q)^{-\ell(\sigma)}
T_{a_r b_{\sigma(r)}}(q^{2r-2}u) \cdots T_{a_1 b_{\sigma(1)}}(u),
\end{align}
for $r=1,\dots,N$ and $b_1<b_2<\cdots<b_N$.
$\ell(\sigma)$ is the length of the permutation $\sigma \in S_r$.

The quantum minor $T_{b_1,\dots,b_r}^{a_1,\dots,a_r}(u)$
satisfies
\begin{align}
[T_{b_1,\dots,b_r}^{a_1,\dots,a_r}(u), T_{a_j b_j}(v)]=0, \label{quantumminorcommutativity}
\end{align}
for $1 \le j \le r$.

We define the quantum determinant
\begin{align}
\mathrm{qdet} T^{(j)}(u):=T_{1,\dots,j}^{1,\dots,j}(u), \ \ \ j=1,\dots,N.
\label{quantumdeterminant}
\end{align}

There is a distinguished commutative subalgebra called the {\it Gelfand-Tsetlin subalgebra}
generated by $\mathrm{qdet} T^{(j)}(u)$, $j=1,\dots,N$.
A basis in which all these elements are simultaneously diagonalized is called the Gelfand–Tsetlin basis.
We introduce a partition $J=(J_1,\dots,J_N)$ as in \eqref{partitionJ}.
% such that
% \begin{align}
% J_1 \cup \cdots \cup J_N=\{1,\dots,n \}, \ \ \ J_j \cap J_k =\varnothing \ (j \neq k).
% \end{align}

Let $\overline{w}=\{ w_1,\dots,w_n  \}$
and $\overline{w}_{J_j}=\{ w_k \ | \ k \in J_j  \}$ ($j=1,\dots,N$).
We take $T(u)$ to be $T^{\mathrm{vect}}(u;\overline{w})
=R_{0n}(u,w_n) \cdots R_{01}(u,w_1)
$ acting on $V_1 \otimes \cdots \otimes V_n$.

\begin{definition}
\begin{align}
\widehat{\xi}_J=
T_{21}(\overline{w}_{J_1}) T_{32}(\overline{w}_{J_1} \cup \overline{w}_{J_2}) \cdots T_{N N-1}(\overline{w}_{J_1} \cup \cdots \cup \overline{w}_{J_{N-1}}) e_{N^n}.
\end{align}
\end{definition}

Observe that $e_{N^n}$ is obtained in the case $J_1 = \dots = J_{N-1} = \varnothing$ and $J_N = \{1, \dots, n\}$.

\begin{definition}
We call that a vector $\eta$ is a singular vector of weight $\mu(u) = (\mu_1(u),\dots,\mu_k(u))$
with respect to the subalgebra $U_q(\widehat{\mathfrak{gl}}_n)$ if it satisfies
\begin{align}
T_{ij}(u) \cdot \eta&=0, \ \ \ 1 \le i < j \le k, \\
T_{ii}(u) \cdot \eta&=\mu_i(u) \eta, \ \ \ 1 \le i \le k.
\end{align}
\end{definition}

\begin{lemma} \label{singularvectorlemma}
Let $\eta$ be a singular vector of weight $\mu(u) = (\mu_1(u),\dots,\mu_k(u))$
with respect to the subalgebra $U_q(\widehat{\mathfrak{gl}}_n)$. Assume that $\eta$ satisfies 
$T_{kk}(\alpha) \cdot \eta = 0$ for some $\alpha \in \mathbb{C}^\times$.
Then $T_{k+1,k}(\alpha) \cdot \eta$ is also a singular vector with respect to 
$U_q(\widehat{\mathfrak{gl}}_n)$, with weight given by, for $u \neq \alpha$,
\begin{align}
\Bigg( \mu_1(u),\dots,\mu_{k-1}(u),\frac{qu-q^{-1}\alpha}{u-\alpha} \mu_k(u)  \Bigg).
\end{align}

\end{lemma}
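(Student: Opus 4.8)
The plan is to verify by hand that $\eta':=T_{k+1,k}(\alpha)\cdot\eta$ meets the two defining conditions of a singular vector of the asserted weight, working only from the $RTT$ relation \eqref{eq:RTT++} and the two hypotheses on $\eta$.

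First I would read off from \eqref{eq:RTT++} the small collection of two-term relations that move $T_{k+1,k}(v)$ to the left past $T_{ij}(u)$ with $i<j\le k$ and past $T_{ii}(u)$ with $i\le k$. Since only the entries $[R]^{ii}_{ii}$, $[R]^{ij}_{ij}$, $[R]^{ji}_{ij}$, $[R]^{ij}_{ji}$ of the trigonometric $R$-matrix are nonzero (the ``more restrictive than the ice rule'' remark in Section~2), every scalar component of \eqref{eq:RTT++} has at most two summands on each side, and a short computation yields in particular
\begin{equation*}
(u-v)\,T_{kk}(u)\,T_{k+1,k}(v)=(qu-q^{-1}v)\,T_{k+1,k}(v)\,T_{kk}(u)-(q-q^{-1})v\,T_{k+1,k}(u)\,T_{kk}(v),
\end{equation*}
together with, for $i<j<k$, for $j=k$ with $i<k$, and for $i<k$ respectively, relations of the shape $(u-v)\,T_{ij}(u)\,T_{k+1,k}(v)=(u-v)\,T_{k+1,k}(v)\,T_{ij}(u)+(q-q^{-1})(\cdots)$, $(u-v)\,T_{ik}(u)\,T_{k+1,k}(v)=(qu-q^{-1}v)\,T_{k+1,k}(v)\,T_{ik}(u)+(q-q^{-1})(\cdots)$, and $(u-v)\,T_{ii}(u)\,T_{k+1,k}(v)=(u-v)\,T_{k+1,k}(v)\,T_{ii}(u)+(q-q^{-1})(\cdots)$, in each of which the correction ``$(q-q^{-1})(\cdots)$'' is a combination of products whose rightmost factor is $T_{ik}(u)$ or $T_{ik}(v)$.

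Next I would set $v=\alpha$ and act on $\eta$. In all these relations except the displayed one, every correction term has a rightmost factor $T_{ik}(\bullet)$ with $i<k$ (hence within the range $1\le a<b\le k$ of the annihilation conditions $T_{ab}(u)\cdot\eta=0$), so it kills $\eta$; combining this with $T_{ij}(u)\cdot\eta=0$ and $T_{ii}(u)\cdot\eta=\mu_i(u)\,\eta$ gives $T_{ij}(u)\cdot\eta'=0$ for all $1\le i<j\le k$ and $T_{ii}(u)\cdot\eta'=\mu_i(u)\,\eta'$ for $1\le i<k$. In the displayed relation the rightmost factor of the unwanted term is $T_{kk}(\alpha)$, which annihilates $\eta$ by the extra hypothesis $T_{kk}(\alpha)\cdot\eta=0$; what survives is $(u-\alpha)\,T_{kk}(u)\cdot\eta'=(qu-q^{-1}\alpha)\,\mu_k(u)\,\eta'$, i.e.\ $T_{kk}(u)\cdot\eta'=\frac{qu-q^{-1}\alpha}{u-\alpha}\,\mu_k(u)\,\eta'$. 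These are exactly the relations defining a singular vector of weight $\left(\mu_1(u),\dots,\mu_{k-1}(u),\frac{qu-q^{-1}\alpha}{u-\alpha}\mu_k(u)\right)$, which finishes the proof.

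The argument is essentially mechanical once the component relations are written out, so I do not expect a genuine obstacle. The two places that need care are (i) the index bookkeeping, namely confirming that in every correction term the rightmost $L$-operator element really is of the form $T_{ab}(\bullet)$ with $1\le a<b\le k$ so that $\eta$ absorbs it, and (ii) getting the spectral parameters $u,v$ and the powers of $q$ right in those relations, for which the explicit table of nonzero $R$-matrix entries in Section~2 is the input. It is worth noting that the assumption $T_{kk}(\alpha)\cdot\eta=0$ enters exactly once, to discard the leftover term in the $i=k$ case, and that the factor $\frac{qu-q^{-1}\alpha}{u-\alpha}$ produced there is precisely the shift appearing in the $k$-th weight component.
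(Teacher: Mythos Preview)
Your proposal is correct and follows essentially the same route as the paper: extract from the $RTT$ relation \eqref{eq:RTT++} the commutation relations between $T_{ij}(u)$ (with $1\le i\le j\le k$) and $T_{k+1,k}(v)$, observe that the correction terms always carry a rightmost factor $T_{ik}(\cdot)$, and then specialize $v=\alpha$ and act on $\eta$. The paper writes out the two relevant relations explicitly (one for $1\le i\le j\le k-1$ and one for $1\le i\le k$, $j=k$), whereas you describe them schematically, but the content and the logic of the argument are the same.
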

\begin{proof}
From the $RTT$ relation \eqref{eq:RTT++}, we have the following commutation relations
\begin{align}
&(z_1-z_2)T_{ij}(z_1) T_{k+1,k}(z_2)+(q-q^{-1})z_2 T_{k+1,j}(z_1) T_{ik}(z_2)
\nonumber \\
=&(q-q^{-1})z_1 T_{k+1,j}(z_2) T_{ik}(z_1)+(z_1-z_2) T_{k+1,k}(z_2) T_{ij}(z_1), \ \ \ 1 \le i \le j \le k-1, \\
&(z_1-z_2)T_{ik}(z_1) T_{k+1,k}(z_2)+(q-q^{-1})z_2 T_{k+1,k}(z_1)T_{ik}(z_2) \nonumber \\
=&(qz_1-q^{-1}z_2)T_{k+1,k}(z_2) T_{ik}(z_1), \ \ \ 1 \le i \le k.
\end{align}

Using these relations, we can show
\begin{align}
T_{ij}(u)T_{k+1,k}(\alpha) \cdot \eta&=0, \ \ \ 1 \le i < j \le k, \\
T_{kk}(u)T_{k+1,k}(\alpha) \cdot \eta&=\frac{qu-q^{-1} \alpha}{u-\alpha} \mu_k(u) T_{k+1,k}(\alpha) \cdot \eta, \\
T_{ii}(u)T_{k+1,k}(\alpha) \cdot \eta&=\mu_i(u) T_{k+1,k}(\alpha) \cdot \eta, \ \ \ 1 \le i \le k-1.
\end{align}
\end{proof}

Define $\lambda_{jk}^J(u)$ $1 \le k \le j \le N$ by
\begin{align}
\lambda_{jk}^J(u)&= u-\overline{w}, \ \ \  j \neq k \\
\lambda_{jj}^J(u)
&=(u-\overline{w}_{J_{j+1}} \cup \cdots \cup \overline{w}_{J_N})
(qu-q^{-1} \overline{w}_{J_{1}} \cup \cdots \cup \overline{w}_{J_j}).
\end{align}

Using Lemma \ref{singularvectorlemma}, \eqref{quantumminorcommutativity}
and
\begin{align}
T_{ii}(u)e_{N^n}&=(u-\overline{w})e_{N^n}, \ \ \ i \neq N, \\
T_{NN}(u)e_{N^n}&=(qu-q^{-1} \overline{w})e_{N^n},
\end{align}
we can show the following.
\begin{proposition}
\label{Molevdiagonalization}
\begin{align}
\mathrm{qdet} T^{(j)}(u) \cdot \widehat{\xi}_J
=\prod_{k=1}^j \lambda_{jk}^J (q^{2k-2}u) \widehat{\xi}_J.
\end{align}
\end{proposition}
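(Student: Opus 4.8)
The plan is to realize $\widehat{\xi}_J$ through a recursion that strips off the leftmost factors one rank at a time, to control each intermediate vector by Lemma~\ref{singularvectorlemma}, and then to transport the resulting eigenvalue back to $\widehat{\xi}_J$ using the centrality of the quantum determinant. Concretely, I would set $\xi^{(N)}:=e_{N^n}$ and, for $j=N-1,N-2,\dots,1$,
\[
\xi^{(j)}:=T_{j+1,j}(\overline{w}_{J_1}\cup\cdots\cup\overline{w}_{J_j})\,\xi^{(j+1)},
\]
so that $\xi^{(1)}=\widehat{\xi}_J$; the product over each set is well defined by \eqref{fundcommtwo}. The claim to be established, by downward induction on $j$, is that $\xi^{(j)}$ is a singular vector with respect to $U_q(\widehat{\mathfrak{gl}}_j)$ of weight $(\lambda^J_{j1}(u),\dots,\lambda^J_{jj}(u))$. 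The base case $j=N$ is immediate: $e_{N^n}$ is annihilated by every $T_{ab}(u)$ with $a<b$ and has the stated $T_{ii}$-eigenvalues, and one checks $\lambda^J_{NN}(u)=qu-q^{-1}\overline{w}$ since $\overline{w}_{J_{N+1}}\cup\cdots$ is empty and $\overline{w}_{J_1}\cup\cdots\cup\overline{w}_{J_N}=\overline{w}$.

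For the inductive step I would view $\xi^{(j+1)}$ as a $U_q(\widehat{\mathfrak{gl}}_j)$-singular vector — its first $j$ weight components all equal $u-\overline{w}$, since $\lambda^J_{j+1,k}(u)=u-\overline{w}$ for $k\le j$ — and apply Lemma~\ref{singularvectorlemma} once for each $\alpha\in\overline{w}_{J_1}\cup\cdots\cup\overline{w}_{J_j}$ with the value $k=j$. The hypothesis $T_{jj}(\alpha)\cdot(\text{current vector})=0$ holds at every step, because the $j$-th weight component evaluated at $\alpha$ equals a rational prefactor $\prod\frac{q\alpha-q^{-1}\beta}{\alpha-\beta}$ (finite for generic $q$ and pairwise distinct $w_i$) times $(\alpha-\overline{w})$, which vanishes because $\alpha\in\overline{w}$. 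After processing the whole set, the $j$-th weight component becomes
\[
\prod_{\alpha\in\overline{w}_{J_1}\cup\cdots\cup\overline{w}_{J_j}}\frac{qu-q^{-1}\alpha}{u-\alpha}\cdot(u-\overline{w})
=\bigl(qu-q^{-1}(\overline{w}_{J_1}\cup\cdots\cup\overline{w}_{J_j})\bigr)\bigl(u-(\overline{w}_{J_{j+1}}\cup\cdots\cup\overline{w}_{J_N})\bigr)=\lambda^J_{jj}(u),
\]
while the components with index $<j$ are unchanged and equal $\lambda^J_{jk}(u)=u-\overline{w}$; this is precisely the asserted weight of $\xi^{(j)}$.

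Next I would evaluate $\mathrm{qdet}\,T^{(j)}(u)=T^{1,\dots,j}_{1,\dots,j}(u)=\sum_{\sigma\in S_j}(-q)^{-\ell(\sigma)}T_{j,\sigma(j)}(q^{2j-2}u)\cdots T_{1,\sigma(1)}(u)$ on $\xi^{(j)}$. On a $U_q(\widehat{\mathfrak{gl}}_j)$-singular vector only $\sigma=\mathrm{id}$ contributes: reading the product from the right, $T_{1,\sigma(1)}(u)$ forces $\sigma(1)=1$ (a larger value annihilates the vector), which returns a scalar multiple of the same singular vector; then $T_{2,\sigma(2)}(q^2u)$ forces $\sigma(2)=2$, and so on up to $\sigma(j)=j$. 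Since $(-q)^{-\ell(\mathrm{id})}=1$, this yields $\mathrm{qdet}\,T^{(j)}(u)\cdot\xi^{(j)}=\prod_{k=1}^j\lambda^J_{jk}(q^{2k-2}u)\,\xi^{(j)}$. Finally, since $\widehat{\xi}_J=T_{21}(\overline{w}_{J_1})\,T_{32}(\overline{w}_{J_1}\cup\overline{w}_{J_2})\cdots T_{j,j-1}(\overline{w}_{J_1}\cup\cdots\cup\overline{w}_{J_{j-1}})\,\xi^{(j)}$ and every factor $T_{a,a-1}(v)$ with $2\le a\le j$ lies in the subalgebra generated by the $T_{ab}(v)$ with $1\le a,b\le j$, in which $\mathrm{qdet}\,T^{(j)}(u)$ is central (the standard companion to \eqref{quantumminorcommutativity}), the operator $\mathrm{qdet}\,T^{(j)}(u)$ commutes through all of these factors, and the eigenvalue $\prod_{k=1}^j\lambda^J_{jk}(q^{2k-2}u)$ is inherited by $\widehat{\xi}_J$.

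I expect the centrality input in the last step to be the main point: Lemma~\ref{singularvectorlemma} only pins down $\mathrm{qdet}\,T^{(j)}(u)$ on $\xi^{(j)}$, whereas $\widehat{\xi}_J$ itself fails to be a singular vector once the lower-rank operators $T_{21},\dots,T_{j,j-1}$ have been applied, so one genuinely needs that the $j$-th quantum determinant belongs to the centre of $U_q(\widehat{\mathfrak{gl}}_j)$ in order to slide it past those operators. A secondary issue is the bookkeeping in the iteration of Lemma~\ref{singularvectorlemma}: one has to verify that the intermediate vectors are nonzero and that the rational prefactors produced at each step never degenerate, which is exactly where the genericity of $q$ and of the inhomogeneities $w_i$ is used.
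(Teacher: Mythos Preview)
Your proposal is correct and follows essentially the same route as the paper: you define the same intermediate vectors $\xi^{(j)}=\widehat{\xi}_J^{(j)}$, use Lemma~\ref{singularvectorlemma} iteratively to show each is $U_q(\widehat{\mathfrak{gl}}_j)$-singular with the stated weight, evaluate the quantum determinant on that singular vector (only $\sigma=\mathrm{id}$ survives), and then commute $\mathrm{qdet}\,T^{(j)}(u)$ past the remaining $T_{a,a-1}$ factors. The only cosmetic difference is that the paper cites \eqref{quantumminorcommutativity} directly for $[\mathrm{qdet}\,T^{(j)}(u),T_{k+1,k}(v)]=0$ rather than phrasing it as centrality in $U_q(\widehat{\mathfrak{gl}}_j)$, but these are the same input.
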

\begin{proof}
We apply the argument in \cite{Molev}.
Using Lemma \ref{singularvectorlemma} and \eqref{quantumminorcommutativity},
we can show
\begin{align}
\widehat{\xi}_J^{(j)}=
T_{j+1,j}(\overline{w}_{J_1} \cup \cdots \cup \overline{w}_{J_{j}}) 
 \cdots T_{N,N-1}(\overline{w}_{J_1} \cup \cdots \cup \overline{w}_{J_{N-1}}) e_{N^n},
\end{align}
is a singular vector of the $U_q(\widehat{\mathfrak{gl}}_j)$ satisfying
\begin{align}
L_{k \ell}(u) \cdot \widehat{\xi}_J^{(j)}&=0, \ \ \ 1 \le k < \ell \le j, \\
L_{k k}(u) \cdot \widehat{\xi}_J^{(j)}&=\lambda_{kk}^J(u) \widehat{\xi}_J^{(j)}, \ \ \ k=1,\dots,j.
\end{align}
Together with the definition of $\mathrm{qdet} T^{(j)}(u)$, we get
\begin{align}
\mathrm{qdet} T^{(j)}(u) \cdot \widehat{\xi}_J^{(j)}
=\prod_{k=1}^j \lambda_{jk}^J (q^{2k-2}u) \widehat{\xi}_J^{(j)}. \label{proofgzone}
\end{align}
From \eqref{quantumminorcommutativity}, we have
\begin{align}
[ \mathrm{qdet} T^{(j)}(u), T_{k+1,k}(v) ]=0, \ \ \ 1 \le k <j. \label{proofgztwo}
\end{align}
Using \eqref{proofgzone}, \eqref{proofgztwo} and
\begin{align}
\widehat{\xi}_J=
T_{21}(\overline{w}_{J_1})
\cdots
T_{j j-1}(\overline{w}_{J_1} \cup \cdots \cup \overline{w}_{J_{j-1}}) 
\widehat{\xi}_J^{(j)},
\end{align}
the claim follows.
\end{proof}

%\subsection{Open questions}
%
%\begin{itemize}
%	\item The Yangian version is not available: either find it somewhere, or take the rational limit.
%	\item To match with the previous ones, the better notation uses $T_{N-i+1,N-j+1}(w)$ where we have above written $T_{ij}(w)$,
%and use the highest weight vector %$e_{NN\dots N}$.
%	\item The right-hand side is equal to the Gelfand Tsetlin basis. This can be shown using Molev's argument for the Yangian case. 
%	See \cite{hopkinsQAnalogueCentralizerConstruction2006}.
%\end{itemize}

% \begin{center}
% 	\begin{tikzpicture}
% 		\foreach \y in {1,2,3}
% 		{
% 			\foreach \x in {\y,...,3}
% 			{
% 				\node [anchor=center] at (\x,6-\y) {$\ol{t}^\y_{1,\x}$};
% 				\node [anchor=center]at (\x+0.5,6-\y) {$\cup$};
% 			}
% 			\node[anchor=center] at (4,6-\y) {$\ol{t}^{\y}_{1,4}$};
% 		}
% 		\node [anchor=center]at (4,2) {$\ol{t}^{4}_{1,4}$};

% 		\tikzset{shift={(5,1)}}
% 		\node at (0,3) {$=$};

% 		\foreach \y in {2,3,4}
% 		{
% 			\foreach \x in {\y,...,4}
% 			{
% 				\node [anchor=center] at (\x-1,6-\y) {$\ol{w}_{I_{\x}}$};
% 				\node [anchor=center]at (\x-1+0.5,6-\y) {$\cup$};
% 			}
% 			\node[anchor=center] at (4,6-\y) {$\ol{w}_{I_{5}}$};
% 		}
% 		\node [anchor=center]at (4,1) {$\ol{w}_{I_5}$};
% 	\end{tikzpicture}
% \end{center}

\end{document}